\numberwithin{equation}{section}
\DeclareMathOperator{\vol}{vol}
\DeclareMathOperator{\Id}{Id}
\DeclareMathOperator{\supp}{supp}
\DeclareMathOperator{\Image}{Im}
\newcommand {\Ham}{{\mathfrak {Ham}}}
\newcommand {\DHam}{{Ham}}
\newcommand{\gclDHam}[1]{\widehat{\DHam}(#1)}
\newcommand{\gclHamc}[1]{\widehat{\Ham}_c(#1)}
\newcommand{\gclDHamc}[1]{\widehat{\DHam}_c(#1)}
\def\GFQI   {GFQI\,}
\def \F {F}
\def \varpsi {\varphi}
\def \phi {\varphi}
\def \a {a}
\def \A {A}
\def \dispdot {}
\begin{document}
 \renewcommand{\labelenumi}{\bf (\arabic{enumi})}
 \renewcommand{\theenumi}{(\arabic{enumi})}

\theoremstyle{plain}
\newtheorem{theorem}{Theorem}[section]
\newtheorem{lemma}[theorem]{Lemma}
\newtheorem{proposition}[theorem]{Proposition}
\newtheorem{corollary}[theorem]{Corollary}
\newtheorem{conjecture}{Conjecture}
\newtheorem*{conjecture-non}{Conjecture}
\newtheorem{definition}[theorem]{Definition}
\theoremstyle{definition}
\newtheorem{example}[theorem]{Example}
\newtheorem{prop-def}[theorem]{Proposition \& Definition}
\theoremstyle{remark}
\newtheorem{remark}[theorem]{Remark}
\newtheorem{remarks}[theorem]{Remarks}
\newtheorem{question}[theorem]{Question}

\title{Symplectic Homogenization}
\author{  \\{ \sc Claude Viterbo}
\\ DMA,  UMR  8553 du CNRS
\\ \'Ecole Normale Sup\'erieure, PSL University\\ 45 Rue d'Ulm\\ 75230 Paris Cedex 05,  France
\thanks{Current address: Laboratoire Math\'ematique d'Orsay, B\^atiment 307, Facult\'e des sciences d'Orsay,  Universit\'e de Paris-Saclay, 91405 Orsay Cedex. Email: Claude.Viterbo@universite-paris-saclay.fr}}
\date{ \today \; at \DTMcurrenttime}
 \maketitle

\begin{abstract}
Let  $H(q,p)$ be a Hamiltonian on $T^*T^n$. Under suitable assumptions on $H$, we show that the sequence $(H_{k})_{k\geq 1}$ defined by $H_{k}(q,p)=H(kq,p)$ converges in the $\gamma$-topology -defined in \cite{Viterbo-STAGGF}- to an integrable continuous Hamiltonian  $\overline{H}(p)$. This is extended to the case of non-autonomous Hamiltonians, and the more general setting in which only some of the variables are homogenized: we consider  the sequence $H(kx,y,q,p)$ and prove it has a $\gamma$-limit  ${\overline H}(y,q,p)$, thus yielding an ``effective Hamiltonian''.
The goal of this paper is to prove  convergence of the above sequences,  state the first properties of the homogenization operator, and give some applications to  solutions of Hamilton-Jacobi equations, construction of quasi-states, etc...
We also prove that when $H$ is convex in $p$, the function $\overline H$ coincides with Mather's $\alpha$ function  defined in \cite{Ma} and associated to the Legendre dual of $H$. This  gives a new proof of its symplectic invariance first discovered  by P. Bernard  in \cite{Bernard2}.
\newline
Keywords: Homogenization, symplectic topology,  Hamiltonian flow, Hamilton-Jacobi equation, variational solutions. AMS Classification: primary: 37J05,  53D35 secondary:  35F20, 49L25, 37J40, 37J50.
\end{abstract}
 \tableofcontents
 \listoffigures

\section{Introduction}
The aim of this paper is to define the notion of homogenization for a
Hamiltonian diffeomorphism of $T^{*}T^{n}$. In other words, given a compactly supported Hamiltonian 
$H(t,q,p)$ on $S^{1}\times T^{*}T^{n}$, we shall study
whether  the sequence $H_{k}$ defined by $H_{k}(t,q,p) = H(kt,kq,p)$ converges
to some Hamiltonian $\overline{H}$, necessarily of the form ${\overline H}(q,p)= {\overline h} (p)$.

The convergence of $(H_{k})_{k\geq 1}$ to $\overline H$ should be understood as the convergence for the symplectic metric $\gamma$ defined  in \cite{Viterbo-STAGGF} (see also Section \ref{sec-3.1}), of
the  flow of $H_{k}$, $\varphi^{t}_{k}$ to the flow
of $\overline{H}$, $\overline{\varphi}^{\;t}$.
This convergence is necessarily rather weak, since for example  $C^{0}$-convergence for the flows essentially never holds. 

However, such $\gamma$-convergence implies the $C^0$-convergence for  the variational solution (see \cite{Viterbo-Ottolenghi} for the definition) of  Hamilton-Jacobi equations

\begin{equation}
\tag{$HJ_{k}$}
\left\lbrace
\begin{array}{ll}
& \frac{\partial}{\partial t} u (t,q) + H (k\cdot t,k\cdot q, \frac{\partial}{\partial
q}u (t,q)) = 0\\
& u (0,q) = f(q)
\end{array}
\right.
\end{equation}
to the variational solutions of
 \begin{equation}
\tag{$\overline{HJ}$}
\left\lbrace
\begin{array}{ll}
& \frac{\partial}{\partial t} u (t,q) + \overline H (\frac{\partial}{\partial q}u (t,q)) = 0\\
& u (0,q) = f(q).
\end{array}
\right.
\end{equation}

It is important to stress that this notion of convergence {\bf does not} imply any kind of
pointwise or almost everywhere convergence\footnote{However $C^{0}$ convergence of the flows implies
$\gamma$-convergence as we proved in \cite{Viterbo-STAGGF}, we
refer to Humili\`ere's work in \cite{Humiliere} for stronger statements, i.e. weaker assumptions.} for $\varphi_k^t$ or $H_k$, but is rather
related to variational notions of convergence, similar to $\Gamma$-convergence
(see \cite{de Giorgi}, \cite{Dal Maso}), that  was  already used in
homogenization theory for studying viscosity solutions of
Hamilton-Jacobi equations. Homogenization using this method was for example used in the work of Lions-Papanicolaou and Varadhan (see  \cite{L-P-V} and also  \cite{Evans}, \cite{Fathi1}), or the rescaling of metrics on $T^{n}$ (see \cite{Acerbi-Buttazzo} and \cite{Gromov}).

\medskip

All the above-mentioned papers can be considered as special cases of ``symplectic homogenization'' that is presented here. We believe some
of the advantages of this unified treatment are

\begin{enumerate}

\item The removal of any convexity or even coercivity  assumption on $H$ in the $p$
direction (this is required in \cite{L-P-V} and \cite{Evans, Fathi1, Fathi3}), usually needed to define $\overline H$  because of the use of minimization
techniques for the Lagrangian. In fact our homogenization is defined on compactly supported objects, and then extended to a number of non-compactly supported situations.

\item  The natural extension of homogenization to cases where $H$ has very
little regularity (less than continuity is needed).

\item  A well defined and common definition of the convergence of $H_{k}$
to $\overline{H}$ or $\varphi_{k}$ to $\overline\varphi$ that applies to flows, Hamilton-Jacobi equations, etc.

\item The symplectic invariance of the homogenized Hamiltonian extending the invariance results proved in \cite{Bernard} for Mather's $\alpha$ function, making his constructions slightly less mysterious.

\item The geometric properties of the function $\overline H$ (see proposition \ref{Main-properties}, \ref{Main-properties-5}),
yielding  computational methods extending those  obtained in the one-dimensional case  in \cite{L-P-V} or in other cases (see for example  \cite{Concordel}).

\end{enumerate}
\medskip

This paper will address these fundamental questions,
some other applications will be dealt with in subsequent papers (see for example \cite{NCMT} for an approach to Mather's theory in the non-convex setting).

\section{Acknowledgments}
Part of this work was done while the author was at Centre de Math\'ematiques  Laurent Schwartz, UMR 7640 du CNRS, \'Ecole Polytechnique - 91128 Palaiseau, France. Supported also by ANR projects GRST, Symplexe, Floer Power, KAMFaible, Microlocal,  and by the NSF under agreement DMS-0635607 and DMS-0603957. 

\section{Some Notation}
\begin{tabular}{l l l }
$\lambda$ &:& the Liouville form $pdq$ defined on $T^*M$\\
$ \Ham_{c}(T^{*}M)$&: &  compactly  supported time-dependent one periodic Hamiltonians,  i.e. elements \\ && in $C^{\infty}_c(  {\mathbb R}/ {\mathbb Z} \times T^*M, {\mathbb R})$.\\
$\DHam_c(T^{*}M)$&: &  set of time one flows of Hamiltonians in $ \Ham_{c}(T^*M)$.\\

 \GFQI     &: & Generating function quadratic at infinity.\\
$\mathcal{L}(T^{*}M)$&: &    the set of  images of the zero section under the action \\ &&of the Hamiltonian diffeomorphisms  in  $\DHam_c(T^{*}M)$.\\
$c(\alpha , S)$&: &  critical value obtained by minimax on $S$ with the \\ &&cohomology class $\alpha$.\\
$\gamma(L)$ and $\gamma (\varphi)$&: &  $c(\mu,L)-c(1,L)$ and $c(\mu,\varphi)-c(1,\varphi)$ the metrics\\&& on ${\mathcal L}(T^{*}M)$ and $\gclDHamc{T^{*}M}$ \\
$\gclHamc {T^{*}M}$&: &   completion for the metric $\gamma$ of     $ {\Ham}_{c}(T^{*}M)$.\\
$\gclDHamc{T^{*}M}$&: &  completion for the metric  $\gamma$ of   $\DHam_c(T^{*}M)$.\\
$c(1(x)\otimes \alpha, S)$&: &   the number $c(\alpha ,S_x)$ where $S_x(q;\xi)=S(x,q;\xi)$.\\
$\sup (f(A))$ &: & for $f : X \to {\mathbb R} $ and $A\subset X$ is defined as $\sup_{x\in A} f(x)$
\end{tabular}\\

\subsection*{Acknowledgments} This paper originated in the attempt to understand the physical phenomenon of self-adaptative (or self-tuning) oscillators described in \cite{Couderc}, that the author learned from a paper by A. Boudaoud, published in ``Images de la Physique'' (see \cite{Boudaoud}). We hope to be able to explain in a future publication the connection between Symplectic homogenization and self-adaptative phenomena in vibrating structures.
We wish to thank not only  the authors of \cite{Couderc} for writing their paper and its popularized version \cite{Boudaoud}, but also CNRS for maintaining\footnote{This journal has since disappeared, but has been replaced by ``Reflets de la physique''.} journals like ``Images de la physique''.

I  wish to thank Franco Cardin, Sergei Kuksin, and Pierre Pansu for useful discussions. Martino Bardi helped me to improve the bibliography (I am solely responsible for remaining omissions),   Vincent Humili\`ere for answering many questions concerning properties of the completions of Hamiltonian functions and maps, and pointing out several inaccuracies and finally  anonymous referees for their careful reading of the manuscript, and detailed and helpful reports.  I am grateful to  the University of Beijing for hospitality during the preparation of this work, to the organizers of the Yashafest in Stanford, and the Conference on symplectic topology in Kyoto in July 2007 for giving me the opportunity to present preliminary versions of this work.  I also warmly thank Claudine Harmide for the efficient and speedy typing of the original version.

\subsection{Note}\label{Note}
Previous versions of this paper crucially used the case $M=T^n$ of the following 
 \begin{conjecture}\label{conjecture} There exists a constant $C_n$ such that any Lagrangian submanifold $L$ of $T^*M$ contained in the unit disc bundle $\{(q,p)\in T^*M \mid \Vert p \Vert \leq 1\}$ that is
  Hamiltonian isotopic to the zero section satisfies $\gamma (L)\leq C_M$. 
\end{conjecture}
The content of section \ref{new-section} replaces the use of this conjecture in the proof of the main theorem. 
Added in revision: the conjecture has been recently  proved, in particular for $M=T^n$, by Shelukhin in \cite{Shelukhin} (see also later proofs in \cite{Guillermou-Vichery, Viterbo-support}). Also a number of papers using the present paper or its ideas
are \cite{M-V-Z}, \cite{M-Z}, \cite{Sorrentino-V}, \cite{NCMT}, \cite{Bisgaard}, \cite{Viterbo-stochastic}.

\section{A crash course on generating function metric}\label{Sec-gf-summary}
This section is devoted to defining the metric $\gamma$, stating some of its main properties and explaining the relationship with Hamilton-Jacobi equations. The reader familiar with the $\gamma$-metric may  skip this section and start directly from section \ref{sec-3}, possibly returning here for reference.

\subsection{Generating functions, the calculus of critical values and the \texorpdfstring{$\gamma$}{g}-metric}\label{sec-3.1}
Let  $M$
be an $n$-dimensional closed manifold, $L$ be a Lagrangian submanifold in $T^{*}M$
Hamiltonian isotopic to the
zero section $0_{M}$ (i.e. such that there is a Hamiltonian isotopy
$\varphi_{t}$ such that $\varphi_{1}(0_{M})=L$).

\begin{definition}\label{Def-3.1} The smooth function $S:M\times
  \mathbb{R}^{k}\to \mathbb{R}$ is a generating function quadratic at
  infinity (\GFQI     for short) for $L$ if:

\begin{enumerate} \item \label{i} there is a non-degenerate quadratic form $B$ on $\mathbb{R}^{k}$
such that
$$
\vert \nabla_{\xi}S(q;\xi) - \nabla B (\xi) \vert \leq C
$$

\item \label{ii}  zero is a regular value of the map
$$
(q;\xi) \mapsto \frac{\partial S}{\partial \xi} (q;\xi)
$$

\item \label{iii}  by \ref{i} and \ref{ii}, $\Sigma_{S}= \{ (q;\xi) \mid \frac{\partial
  S}{\partial \xi} (q;\xi)=0\}$ is a smooth compact submanifold in $M\times
\mathbb{R}^{k}$. The map
$$
i_{S} : \Sigma_{S} \to T^{*}M
$$
$$
(q;\xi) \mapsto (q, \frac{\partial S}{\partial q} (q;\xi))
$$
sends diffeomorphically $\Sigma_{S}$ to $L$.
\end{enumerate} 
\end{definition}

\begin{remarks} We point out that :
 \begin{enumerate}
\item Throughout this paper, we shall use a semicolon to separate the ``base variables'' $q$ from the
``fiber variables'', $\xi$, and we shall abbreviate ``generating function quadratic at infinity'' by ``\GFQI     ''.

\item When $k=0$, i.e. there are no fiber variables, $L$ is just the graph of the differential $dS(q)$.
 \end{enumerate}
\end{remarks}
When $L$ is Hamiltonian isotopic to the zero section,  any two \GFQI     of $L$ are equivalent by the equivalence relation generated by the following three elementary operations associating $S_{1}$ to $S_{2}$ (see
\cite{Theret}, theorem 3.2, page 254,\cite{Viterbo-STAGGF}, prop 1.5, page 688):
\begin{enumerate} 
\item \label{stab} (Stabilization) $S_{2}(x,\xi, \eta)=S_{1}(x,\xi)+ q(\eta)$ where $q$ is a non-degenerate quadratic form.
\item \label{diff}(Diffeomorphism )  $S_{2}(x,\xi)=S_{1}(x, \varphi (x,\xi))$ where $(x,\xi) \longrightarrow (x,\varphi (x,\xi))$ is a fiber-preserving diffeomorphism.
\item \label{trans}(Translation)  $S_{2}(x,\xi)=S_{1}(x,\xi)+c$.
\end{enumerate} 
 Moreover, an elementary computation using the K\"unneth isomorphism shows
that denoting by $S^{\lambda}$ the set
$$
\{ (q;\xi) \in M \times \mathbb{R}^{k} \mid S (q;\xi) \leq \lambda \}
$$
we have for $c$ large enough that
$$
H^{*} (S^{c} , S^{-c}) \simeq H^{*} (M) \otimes H^{*} (D^{-}, \partial D^{-})
$$
where $D^{-}$ is the unit disc of the negative eigenspace of
$B$. In the sequel we denote by $S^{\infty}$, (resp. $S^{-\infty}$) the set $S^{c}$ (resp. $S^{-c}$) for $c$ large enough.   Therefore, to each cohomology class $\alpha$ in
$H^{*}(M)$ we may associate the image of $\alpha \otimes
T$ ($T$ is a chosen generator of $H^{*}(D^{-}, \partial
D^{-}) \simeq \mathbb{Z}$) in $H^*(S^{\infty},S^{-\infty})$, and for $\alpha \neq 0$,  by homological minimax, a critical level $c(\alpha
, S)$ (see \cite{Viterbo-STAGGF} section 2, page 690-693).

Note that according to \cite{Viterbo-STAGGF}, if $S_{1}, S_{2}$ are related by \ref{stab} or \ref{diff}, then $H^{*}(S_{1}^{\mu}, S_{1}^{\lambda}) = H^{*}(S_{2}^{\mu}, S_{2}^{\lambda})$, while if they are related by  \ref{trans}, we have   $H^{*}(S_{1}^{\mu}, S_{1}^{\lambda}) = H^{*}(S_{2}^{\mu+c}, S_{2}^{\lambda+c})$. As a result the minimax critical levels are well-defined up to a constant shift (i.e. a shift by a constant independent from the cohomology class). 
\begin{definition}\label{Def-3.3}(see \cite{Viterbo-STAGGF}, definition 2.1)
Let $L$ be Hamiltonian isotopic to the zero section, $S$ a \GFQI     for $L$. For any non-zero cohomology  class $\alpha$ we define $$c(\alpha, S)= \inf\{\lambda \mid \text{the image of}\; T\otimes \alpha\; \text{in}\;\; H^{*}(S^{\lambda}, S^{-\infty})\; \text{is non-zero}\}.$$We denote by $1$ the generator of $H^0(M)$, $\mu$ the generator of $H^n(M)$ and set 
\begin{eqnarray*}
c_{-}(S) & = & c(1,S) \\
c_{+}(S) & = & c(\mu,S)\\
\gamma (L) & =& c(\mu, S) - c (1,S)
\end{eqnarray*}
\end{definition}

 \begin{remarks}\label{rem-3.4}
 We notice that 
\begin{enumerate} 
\item According to \cite{Viterbo-STAGGF, Theret} the numbers $c(\alpha,S)$ indeed only depends on $L$ and not on $S$ up to a global shift : replacing $S$ by $S+c$ generates the same Lagrangian and this shifts all the $c(\alpha,S)$ by $c$. 
\item \label{rem-3.4-1} Note that translating $S$ by a constant shifts
$c(\alpha, S)$ by the same constant, so that, provided we
normalized $S$ in some way, $c(\alpha, L)$ is now well defined as the common value of $c(\alpha, S)$ for $S$ a \GFQI     for $L$. For example,  if we specify the Hamiltonian $H$ yielding the isotopy between
the zero section and $L$, we may normalize $S$ by requiring that its critical values coincide with the critical values of the
action $A_{H}$ defined on ${\mathcal P}=\{ c : [0,1] \mapsto T^{*}M \mid c(t)=(q(t),p(t)), \;p(1)=0\}$ by

$$A_{H}(c)=\int_{0}^{1} \left [ p(t)\dot q(t) -H(t,q(t),p(t))\right ] dt$$
Thus $c_{\pm}(H)=c_\pm(S)$ is well defined. When $\phi^{1}$ is generated by some compactly supported Hamiltonian, we may normalize the \GFQI  so that the fixed point at infinity, which is a critical point of $S$, has critical value zero. We may thus define $c (\alpha, \phi(0_M))$ for $\phi \in {\DHam}_{c}(T^*M)$.
\item \label{rem-trivial-c} Note that when $S$ has no fiber variable, we have $c_+(S)=\sup_{x\in M} S(x), c_-(S)=\inf_{x\in M} S(x)$
 
\item \label{rem-homology-level} There is a similar definition for a homology class instead of cohomology class (see \cite{Viterbo-STAGGF}, p. 692). For $u \in H_*(M)$ we have
$$c(u, S)= \inf\{\lambda \mid \text{the image of}\; T\otimes u\; \text{is in the image of}\;\; H_{*}(S^{\lambda}, S^{-\infty}) \}.$$
\item  We shall sometimes deal with the case $M=\mathbb{R}^{n}$. Then we
need quadraticity at infinity of $S$ for both  the $\xi$ and $x$ variable, so that \ref{i} in Definition \ref{Def-3.1} should be replaced by
\begin{enumerate}
\item [(1')] there exists a nondegenerate quadratic form $B(q,\xi)$ on $M \times
\mathbb{R}^{k} ( = \mathbb{R}^{n}\times \mathbb{R}^{k})$ such that 
$$
\vert \nabla S (q;\xi) - \nabla B (q;\xi) \vert \leq C\dispdot
$$
\end{enumerate} 
The map $\gamma$ is well defined on the set $\mathcal{L}(T^{*} {\mathbb R} ^{n})$ of
Lagrangian submanifolds Hamiltonian isotopic to the zero section, where the Hamiltonian is assumed to be  compactly supported by compactifying $L$ and $ {\mathbb R}^n$ to $S^n$ so that we set $\gamma (L)\overset{def}=\gamma(L\cup \{(\infty,0)\})$ where $$L\cup \{(\infty,0)\} \subset T^*S^n=T^* {\mathbb R}^n \cup ( \{\infty\} \times ({\mathbb R}^n)^*) \dispdot $$
\end{enumerate} 
 \end{remarks}

It follows from \cite{Viterbo-STAGGF},  that  $\gamma$  defines a metric on $\mathcal{L}(T^{*}M)$ by
setting

\begin{definition}\label{Def-4.5} Let $S_{1}$ and $S_{2}$ be \GFQI     for $L_{1}$ and $L_{2}$ in ${\mathcal L}(T^{*}M)$. Then  $\gamma (L_{1},L_{2}) = c(\mu , S_{1} \ominus S_{2})
  - c (1,S_{1}\ominus S_{2})$ where $(S_{1}\ominus S_{2}) (q; \xi_{1},\xi_{2}) = S(q;\xi_{1})
  - S (q; \xi_{2})$. The function $\gamma$ defines a metric on  ${\mathcal L}(T^{*}M)$.
  We denote by $\widehat {\mathcal L}(T^*M)$ the completion\footnote{Usually called the Humili\`ere completion, see \cite{Humiliere}.} of ${\mathcal L}(T^{*}M)$ for $\gamma$. 
\end{definition}

That $\gamma$  is indeed a metric on $\mathcal L(T^*M)$  is a
consequence of Lusternik-Shnirelman's theory (see \cite{Viterbo-STAGGF}).
Note that $\gamma(L)=\gamma(L, 0_{M})$ so our use of $\gamma$ is a slight abuse of notation. 

\medskip

Our goal  is to define a metric on $\Ham_{c} (T^{*}M)=C_{c}^{\infty}([0,1]\times T^*M, {\mathbb R})$ the set of compactly supported, time dependent
Hamiltonians of $T^{*}M$, and on ${\DHam}_{c}(T^{*}M)$ the group of time-one maps of Hamiltonians in $\Ham_{c} (T^{*}M)$. 
\label{Define-j}  For $M = T^{n}$, the graph of  $\varphi \in \DHam_c(T^*T^n)$ ,
$$
\Gamma (\varphi) = \{ (z, \varphi(z)) \mid z \in T^{*}T^{n}\}
$$
is a Lagrangian submanifold of $T^{*}T^{n}\times \overline{T^{*}
  T^{n}}$ (where $\overline{T^{*}M}$ is $T^{*}M$ with the symplectic
form of opposite sign~: $-dp \wedge dq$).

But $T^{*}T^{n} \times \overline{T^{*}T^{n}}$ is covered by
$T^{*}(\Delta_{T^{*}T^{n}})$,  where $\Delta_{T^{*}T^{n}}$ is the
diagonal, through the symplectic covering map 
\begin{align*}j: & \; T^*(T^n\times {\mathbb R} ^n) \longrightarrow T^*T^n \times \overline {T^*T^n} \\
 & (u,v,U,V) \longrightarrow (u-V,v,u,v-U) \end{align*}

Here $(q,p)\in T^*T^n, (Q,P) \in \overline {T^*T^n}$ and  the symplectic form is $$dp\wedge dq-dP\wedge dQ$$ while $(u,v) \in T^n\times {\mathbb R} ^n$, $(u,v,U,V) \in T^*(T^n\times {\mathbb R} ^n) $ with symplectic form $dU\wedge du+ dV\wedge dv$. 

The inverse of $j$,  $$(q,p,Q,P) \longrightarrow (q,P,p-P,Q-q)=(u,v,U,V)$$ is not well-defined : since $Q,q$ are only defined modulo $ {\mathbb Z}^n$, so is $V$. It is thus multivalued, but we may  lift $\Gamma(\varphi)$ to
$\widetilde\Gamma(\varphi) \subset j^{-1}(\Gamma(\varphi))$, which is now a Lagrangian submanifold in
$T^{*}(\Delta_{T^{*}T^{n}})$.

In other words it $\varphi^t$ is a Hamiltonian isotopy of $T^*T^n$ and $\widetilde \varphi^t$ the  lift to $T^* {\mathbb R}^n$ such that $\widetilde \varphi^0=\Id$, we have, setting $[q]$ to be the class of $q\in {\mathbb R}^n$ in $T^n$, 
$$\widetilde \Gamma (\varphi)=\left\{ ([q],P, p-P,Q-q) \in T^*(T^n\times {\mathbb R}^n) \mid \widetilde \varphi(q,p)=(Q,P)\right\}$$

When $\varphi$ has compact support, we may compactify both
$\widetilde{\Gamma}(\varphi)$ and $\Delta_{T^{*}T^{n}}$ and we get a
Lagrangian submanifold $\overline\Gamma(\varphi)$ in $T^{*}(T^{n}\times
S^{n})$. We may then set

\begin{definition} [\cite{Viterbo-STAGGF}, page 697]
For $M=T^n$, the maps $c_-,c_+$ and $\gamma$ are defined by
$$c_-(\varphi)=c (1\otimes 1 , \overline\Gamma(\varphi))$$
$$c_+(\varphi)=c ( \mu_{T^{n}} \otimes \mu_{S^{n}} , \overline\Gamma
(\varphi))
$$
and
$$
\gamma (\varphi) = \gamma(\overline\Gamma
(\varphi))= c ( \mu_{T^{n}} \otimes \mu_{S^{n}} , \overline\Gamma
(\varphi)) - c (1\otimes 1 , \overline\Gamma(\varphi))\dispdot
$$
We also set 
$$c_\pm(\varphi, \psi)=c_\pm(\varphi \psi^{-1})$$ and $$\gamma(\varphi, \psi)=\gamma (\varphi\psi^{-1})$$
 \end{definition}

\begin{proposition}\label{Prop-3.9} (see \cite{Viterbo-STAGGF} )

The map $\gamma$ defines a bi-invariant metric on ${\DHam}_{c}(T^{*}T^n)$
since

\begin{enumerate} 
\item it is non-degenerate $\gamma(\varphi)=0 \Longleftrightarrow
\varphi = id$
\item it is invariant by conjugation $\gamma(\psi \varphi \psi^{-1})
= \gamma(\varphi)$ for any $\psi$ in $\DHam_c(T^{*}T^n)$.
\item it satisfies the triangle inequality
$$
\gamma (\varphi\psi) \leq \gamma(\varphi)+\gamma (\psi)
$$
for any $\varphi , \psi$ in $\DHam_c(T^{*}T^n)$.
\end{enumerate} 
\end{proposition}
\begin{proof}  
Then according to \cite{Viterbo-Montreal}, proposition 2.11, $\gamma $ is a distance on the set of Lagrangian
 Hamiltonian isotopic to the zero section. We apply this to $\overline\Gamma(\varphi) \subset T^*(T^n\times S^n)$. Property $(1)$ follows from the non-degeneracy of the 
 metric on $\mathcal{L}(T^{*}(T^n\times S^n))$. 
For property $(2)$ the proof is identical to the proof of corollary 4.3 in \cite{Viterbo-STAGGF} for the case $M= {\mathbb R}^n$).  
 The last property follows easily from the triangle inequality  from corollary 3.6 page 696 of \cite{Viterbo-STAGGF}. 
\end{proof} 
\begin{remarks} 
\item The metric $\gamma$ can be extended on one hand to $\DHam_c(T^*M)$ by setting $\widehat \gamma (\varphi)=\sup \{ \gamma (\varphi(L),L) \mid L\in {\mathcal L} (T^*M)\}$ and on the other hand to general symplectic manifolds (see \cite{Oh-spectrum1, Schwarz}) using Floer cohomology instead of generating function homology. 
\item A vector field $Z$ is called a Liouville vector field, if 
$Z$ is conformal (i.e. the flow $\psi_{t}$ of $Z$
satisfies $\psi_{t}^{*} \omega = e^{t}\omega$) . We then have, according to \cite{Viterbo-STAGGF} (corollary 4.3 page 698)
 \begin{equation} \label{conf}
\gamma (\psi_{t}\varphi   \psi^{-1}_{t}) = e^{t} \gamma (\varphi) \dispdot
\end{equation}
\end{remarks} 

On the set  $\Ham_{c}(T^*T^n)$ the  metric $ \gamma$ is defined as follows\footnote{Remember that elements of $\Ham_{c}(T^*M)$  are time-dependent Hamiltonians.}

\begin{definition}
Let $H(t,z)$ be a Hamiltonian in $\Ham_{c}(T^*M)$ with flow $\varphi_{H}^t$. We denote by 
$$\gamma (H,K)=\sup \left\{\gamma (\varphi_{H}^t \circ(\varphi_{K}^t)^{-1}) \mid t \in [0,1] \right\}
$$

\end{definition}
Finally we state two convergence  criteria for  the
$\gamma$-metric.

\begin{proposition}\label{Prop-3.12} Let $M = \mathbb{R}^{n}$ or $T^{n}$.
Let  $\varphi_{H}, \varphi_{K}$ be the time-one maps of the flows associated to $H,K$ in $\Ham_{c}(T^*M)$. We have $$ \gamma (H,K) \leq \Vert H-K \Vert _{C^0([0,1]\times T^{*}M, {\mathbb R}) }\dispdot $$ As a result, if the sequence $(H_{k})_{k\geq 1}$ of Hamiltonians on $T^{*}M$ with fixed
support, $C^{0}$-converges  to $H$,  then $(H_{k})_{k\geq 1}$ converges, in the
metric $\gamma$, to $H$.

Similarly we have a constant $C$ such that 
$$\gamma (\varphi, \psi) \leq C\cdot d_{C^0}(\varphi, \psi) \dispdot$$
\end{proposition}

 \begin{proof} The first statement  follows immediately from \cite{Viterbo-STAGGF}
(Proposition 4.6, page 699 and Proposition 4.14 page 707). It is stated explicitly for example as proposition
2.15 in \cite{Viterbo-Montreal} or 
in \cite{Humiliere} p. 378 proposition 2.4, d). 
The second one follows for Hamiltonian maps supported in the unit disc bundle from Theorem 5 in \cite{Seyfaddini}. The general case follows from the homogeneity (by the dilation $t\cdot (q,p)\mapsto (q,t\cdot p)$) of both sides. 
\end{proof}

We may therefore define the
completion $\gclHamc{T^{*}M}$ of ${\Ham}_{c}(T^{*}M)$ for $\gamma$, as Humili\`{e}re did (see \cite{Humiliere}, section 4, page 388).  For example $\gclDHamc{T^{*}M}$  is the set of equivalence classes of Cauchy limits  for $\gamma$ of sequences of elements of $\DHam_c (T^*M)$, two sequences being equivalent if their $\gamma$-distance converges to $0$. For $\gclDHamc{T^{*}M}$ we assume the sequence is supported in a fixed compact set. Note that the support of an element of  $\gclDHam{T^{*}M}$ can be defined directly (see \cite{Humiliere}), so this is the same as the subset of elements of $\gclDHam{T^{*}M}$ having compact support. 
From the above proposition we deduce that a sequence of Hamiltonians $C^{0}$-converging uniformly on compact sets 
 will be a Cauchy sequence for $\gamma$, hence defines an
element in $\gclHamc{T^{*}M}$. We thus get

\begin{proposition}[see \cite{Humiliere}, proposition 1.3] \label{Prop-3.13}
There is a continuous inclusion map
$$
C_{c}^{0} ([0,1] \times T^{*} M , \mathbb{R}) \to \gclHamc{T^{*}M}\dispdot
$$
Similarly if $\overline{\DHam_c(T^{*}M)}$ is the $C^{0}$-closure of
${\DHam}_{c}(T^{*}M)$ we have a continuous  inclusion
$$
\overline{\DHam_c(T^{*}M)}\to \gclDHamc{T^{*}M} \dispdot
$$
\end{proposition}

Finally, we claim that the spectral numbers $c(\alpha , L)$ are well defined on the $\gamma$-completions of the above metric spaces, for example in  $\gclDHamc{T^{*}M}$.

\begin{proposition} \label{Prop-3.14}
For $\alpha \in H^*(T^n\times S^n)\setminus \{0\}$  the map $\varphi \mapsto c(\alpha, \varphi)$ uniquely extends as a continuous map (for the metric $\gamma$) defined on  $\gclDHamc{T^{*}T^{n}}$.
\end{proposition} 
\begin{proof} 
Since   $\vert c(\alpha,\varphi_1) - c(\alpha, \varphi_2) \vert  \leq \gamma( \varphi_1, \varphi_2)$ according to Appendix \ref{Appendix-B}, Proposition \ref{Prop-B3}, this is just an application of the general statement that a Lipschitz map defined on a metric space has a unique extension to its completion. 
\end{proof} 

\subsection{Variational solutions of Hamilton-Jacobi equations}\label{Sect-3.2}

Let  $S(x,y;\xi)$ be a \GFQI     , where $(x,y)\in X\times Y$. Define $S_y(x,\xi)=S(x,y;\xi)$ and for $\alpha \in H^*(X)$ set $c(\alpha\otimes1(y),S)=c(\alpha, S_y)$. The notation indicates that $H^*(X\times \{y\})=H^*(X) \times H^*(\{y\})$ and $H^*(\{y\})$ is one-dimensional generated by an element denoted $1(y)$. 

Let $\phi^{t}$ be the Hamiltonian flow of $H(q,p)$, and ${\widetilde \Gamma} (\phi^{t})$ the lift of its  graph in $T^*(T^n\times {\mathbb R}^n)$ as in Definition \ref{Def-4.5}. Let $S_{t}(q,P,\xi)$ be a \GFQI      for $\widetilde\Gamma(\phi^{t})$. 
We thus define $c(1(q,P), S_{t})=c(1,S_{t,q,P})$. Then $u_{t}(q,P)=c(1(q,P),S_{t})$ is, by definition, the  variational solution of

 \begin{gather*}\left \{ \begin{array}{ll} \frac{\partial}{\partial t}u_{t}(q,P)+H(q,P+\frac{\partial}{\partial q}u_{t}(q))=0 \\
 u_{0}(q,P)=0 \dispdot \end{array}\right.
 \end{gather*}

 We refer to \cite{Viterbo-Ottolenghi}, \cite{Viterbo-HJ} and \cite{Cardin-Viterbo} for more information on variational solutions, in particular the fact that it does not depend on the choice of $S$ and Appendix 2 (theorem 13.1) of \cite{NCMT} for the proof that variational solutions satisfy the equation outside a closed set of  zero measure. 
 \section{Statement of the main results}\label{sec-3}~
We shall first give our results in the case of homogenization with respect to all variables, then present the case of partial homogenization, and finally the 
applications to variational solutions of Hamilton-Jacobi equations.

\subsection{Standard homogenization}

\begin{theorem}[Main theorem]\label{Main-theorem} \hskip 0pt

Let $H(t,q,p)$ be a compactly supported,  smooth Hamiltonian, $1$-periodic in $t$ on  $T^*T^n$. Then the following holds:
 \begin{enumerate}
\item \label{Main-theorem-1} There exists a Hamiltonian $\overline H \in C_{c}^0({\mathbb R}^n, {\mathbb R} )$  such that  the sequence $H_{k} (t,q,p) = H(kt,kq,p)$ $\gamma$-converges to
$\overline{H}(t,q,p)= \overline H(p)$.

\item \label{Main-theorem-2} The function $\overline H$ only depends on $\varphi^1$, the time-one map associated to $H$
(i.e. it does not depend on the isotopy $(\varphi^t)_{t\in [0,1]}$).

\item \label{Main-theorem-3}The map
$$
\mathcal{A} : C_{c}^{\infty} ({\mathbb R}/ \mathbb Z \times T^{*} T^{n} , \mathbb{R}) \to C_{c}^{0}
(\mathbb{R}^{n},\mathbb{R})
$$
defined by $\mathcal{A}(H) = \overline{H}$ extends to a nonlinear projector (i.e.  a surjective map satisfying ${\mathcal A}^2= {\mathcal A}$) with Lipschitz constant 1
$$
\mathcal{A} : \gclHamc { T^{*} T^{n}} \to
C_{c}^{0}(\mathbb{R}^{n},\mathbb{R})
$$
where the metric on $\gclHamc{ T^{*} T^{n}}$ is given by $\gamma$, and the metric on
$C^{0}(\mathbb{R}^{n},\mathbb{R})$ is the $C^{0}$-metric.

Moreover $\mathcal A$ sends Lipschitz Hamiltonians to Lipschitz maps. 
\end{enumerate}
\end{theorem}

The next theorem states  some properties of the homogenization map $\mathcal{A}$.

 \begin{theorem}[Main properties of symplectic homogenization]  \label{Main-properties} \hskip 0pt

 The map $\mathcal A$ defined in the above theorem satisfies the following properties:
 \begin{enumerate}
\item \label{Main-properties-1}It is monotone, i.e. if $H_1 \leq H_2$, then ${\mathcal A}(H_1) \leq {\mathcal A}(H_2)$.

 \item \label{Main-properties-2} It is invariant under the action of a  Hamiltonian symplectomorphism: \newline${\mathcal{A}} (H\circ \psi)= {\mathcal{A}}(H)$ for all $\psi \in \DHam (T^*T^n)$. 

 \item \label{Main-properties-3}  Setting $H_c(t,q,p)=c\cdot H(c\cdot t , q,p)$, we have ${\mathcal A}(H_c)=c{\mathcal A}  (H)$ for any $c\in {\mathbb R}$. In particular if $H$ is autonomous, $\mathcal A (c H)= c \mathcal A( H)$.
 \item \label{Main-properties-4} The map $\mathcal{A}$ extends to  a map (still denoted by $\mathcal{A}$) between ${\mathcal P}(T^*T^n)$, the set of subsets of $T^*T^n$,  to $ {\mathcal P}({\mathbb R}^n)$, the set of subsets of ${\mathbb R}^n$. This map is bounded from below by the symplectic shape of Sikorav (see \cite{Benci, Sikorav,Eliashberg}), i.e. $$\mathop{shape}(U)=\{p_0\in {\mathbb R}^n \mid \exists \psi \in\DHam_c ( T^{*} T^{n}), \psi(T^n\times \{p_0\}) \subset U \}
  \subset {\mathcal{A}}(U).$$
 \item \label{Main-properties-5}  If $L$ is a Lagrangian Hamiltonian isotopic to $L_{p_{0}}=\{(q,p_{0})\in {\rm T^*T^{n}}\} $ and $\sup_{(q,p)\in L} H(q,p)\geq h$ (resp. $\inf_{(q,p)\in L} H(q,p)\leq h$) we have ${\mathcal A}(H)(p_{0})\geq h$ (resp. $\leq h$).
 \item \label{Main-properties-6} We have $$\lim_{k\to \infty} \frac{1}{k} c_{+}(\varphi ^k)= \sup_{p\in {\mathbb R} ^n} {\overline H}(p)$$
$$\lim_{k\to \infty} \frac{1}{k} c_{-}(\varphi ^k) =\inf_{p\in {\mathbb R} ^n} {\overline H}(p).$$
\item \label{Main-properties-7} For any sequence of non-negative compactly supported functions, $(H_n)_{n\geq 1}$, converging uniformly to $1$ on compact sets, we have  $\lim_{n}\zeta (H_{n})=1$.

 \item \label{Main-properties-8} Given any Radon measure $\mu$ on ${\mathbb R}^n$ the map $$\zeta (H)= \int_{{\mathbb R}^n} {\mathcal{A}}(H)(p) d\mu(p)$$ satisfies all the properties of a {\bf symplectic quasi-state}\footnote{see \cite{E-P} for the definition and properties of quasi-states in the symplectic framework, inspired by  \cite{Aarnes}.} except for normalization (i.e. $\zeta(1)=1$) which is however satisfied in a weak sense according to  \ref{Main-properties-7}. In particular we have ${\mathcal A}(H+K) ={\mathcal A}(H)+ {\mathcal A}(K)$ whenever $H$ and $K$ Poisson-commute (i.e. $\{H,K\}=0$).
 \end{enumerate}
 \end{theorem}

 \begin{remarks}\label{rem-4.3} \begin{enumerate}  
 Here are some comments:
 \item The function $\overline H$ will be defined in the autonomous case as $\overline H(p)=\lim_k c(\mu\otimes 1(p), H_k)$. Of course there is a lot to prove, starting from the existence of this limit. 
 
 \item   In \ref{Main-properties-1} the assumption could be replaced by the property that $H_1 \preceq H_2$ in the sense of \cite{Viterbo-STAGGF} (i.e. $c_{-}(\varphi_{H_{1}}^{-1}\circ \varphi_{H_{2}}^1)=0$, see \cite{Viterbo-STAGGF}, definition 4.9, page 701).
\item \label{rem-4.3-3} Note that it is not true that characteristic functions belong to $\gclHamc{ T^{*} T^{n}}$, for example an integrable Hamiltonian is in the completion if and only if it is continuous, so 
for example for $U=S^1\times [-1,1]$, $H_U$ is not in $\gclHamc{ T^{*} T^{n}}$. However $\mathcal A$ can be extended to any $H$ that is a limit of a decreasing sequence of continuous functions -i.e. any upper semi-continuous function-by setting   $H=\lim_{k} H_k$, by setting $\overline H= \lim_{k} \overline H_k$. It is easy to show that this does not depend on the choice of the sequence $(H_k)$. So $\mathcal A$ extends to the class of upper semi-continuous functions. However it is not clear what properties do still hold in such situation, since for example $-H$ is not upper semi-continuous. 

 \item Property \ref{Main-properties-3} is essentially trivial for $c>0$. The non-trivial fact is $\mathcal A (-H)= - \mathcal A( H)$ (see Remark \ref{rem-4.7}).
 \item As a result of \ref{Main-properties-5} if
 $u$ is a smooth subsolution of the stationary Hamilton-Jacobi equation, that is $H(x, p+du(x))\leq h$, then $\overline H (p) \leq h$. Similarly if $u$ is a smooth supersolution, that is $H(q,p+du(q))\geq h$, then
 $\overline H(p) \geq h$.
 \item From \ref{Main-properties-5}, we get the following statement: let $$E_{c}^+=\{p_{0} \in {\mathbb R}^n \mid \exists L \; \text{Hamiltonian isotopic to}\; L_{p_{0}}, \inf_{(q,p)\in L} H(q,p)\geq c\}$$
 $$E_{c}^{-}=\{p_{0} \in {\mathbb R}^n \mid \exists L \; \text{Hamiltonian isotopic to}\; L_{p_{0}}, \sup_{(q,p)\in L} H(q,p)\leq c\}$$
As a result,  if $p \in \overline E_{c}^{+} \cap \overline E_{c}^{-}$, we have $\overline H(p)=c$.  
\end{enumerate} 
  \end{remarks}
More generally we have the following
\begin{corollary} 
 Let $H$ be an autonomous Hamiltonian. Then if $c=\overline H(p_0)$, then $H^{-1}(c)$
 intersects all images of $L_{p_0}$ by a Hamiltonian map.
 \end{corollary} 
 \begin{proof} 
 Indeed, if this was not the case we could find a Hamiltonian image of $L_{p_0}$ which is contained in either $H<c$ or $H>c$. In the first case, this implies $\overline H(p_0) >c$ in the second $\overline H(p_0) <c$, contradicting our assumption.  
 \end{proof} 

 \subsection{Partial homogenization}

 We now consider the case of a Hamiltonian $H$  defined on $T^*T^n\times M$, where $M$ is some symplectic manifold. We shall only deal with the case where $M=T^*T^m$, but the general case can be easily adapted. We want to understand the limit of $H_k(t,x,y,z)=H(kt, kx,y,z)$ for $(x,y)\in T^*T^n, z \in M$. 
Indeed, we shall show that  it is sufficient  to define the homogenization for  $\varphi^t$ for small $t$. But denoting by $\varphi_k^t$ the flow of $H_k$,  the graph of $\varphi_{k}^t$ for $t$ small  lives in a neighborhood of $T^*(T^n\times {\mathbb R}^n)\times \Delta_{M}$, hence in
$T^*(T^n\times {\mathbb R}^n)\times T^*\Delta_{M}$. Since the graph lives in a cotangent bundle, we shall see that it can again be described, using  the theory of generating functions\footnote{Indeed, the main advantage of $T^{n}$ over general closed manifolds, is that $T^{*}T^{n}\times \overline {T^{*}T^{n}}$ is covered by  $T^{*}\Delta$, while for general $M$ this only holds in a tubular neighborhood of the diagonal.}. 
We then have the following extension of Theorem \ref{Main-theorem}, which corresponds to the case $m=0$: 
 \begin{theorem} [Main theorem, partial homogenization case] \label{Thm-4.5}
 Let $H(t,x,y,q,p)$ be a compact supported Hamiltonian on $T^*T^{n+m}$.
 Then \begin{enumerate}
  \item \label{Thm-4.5-1} The sequence $(H_{k})_{k\geq 1}$ defined by $$H_k(t,x,y,q,p)=H(kt,kx,y,q,p)$$ $\gamma$-converges to
a continuous function $\overline H$ of the form $\overline H (y,q,p)$.
 \item \label{Thm-4.5-2}The map $${\mathcal A}_x : C_{c}^{\infty} ([0,1]\times T^{*} T^{n+m} , \mathbb{R}) \to C_c^{0}
(\mathbb{R}^{n}\times T^*T^m,\mathbb{R})$$
given by $\mathcal{A}_{x}(H) = \overline{H}$ extends to a projector (i.e. it is surjective and  satisfies ${\mathcal A}_{x}^2= {\mathcal A}_{x}$) with Lipschitz constant 1
$$\mathcal{A}_x:  \gclHamc {T^*T^{m+n}}\to
 C_c^0({\mathbb{R}^{n} \times T^*T^m)}$$
where the metric on $\gclHamc {T^*T^{m+n}}$ is $\gamma$.
\item \label{Thm-4.5-3} If $H_{(q,p)}(x,y)=H(x,y,q,p)$, we have $$\mathcal {A}_{x}(H) (y,q,p)= \mathcal {A}(H_{(q,p)})(y)$$
Thus partial homogenization is obtained by freezing the non-homogenized variables. 
 \end{enumerate}
   \end{theorem}

 \begin{remarks} \begin{enumerate}
 \item In \ref{Thm-4.5-2} we identify $C_c^{0}(\mathbb{R}^{n}\times T^*T^m,\mathbb{R})$ to an element in $\gclHamc {T^*T^{m+n}}$ as we did in the previous section.
\item  The Hamiltonian $\overline {H}(y,q,p)$ is called the effective Hamiltonian. In case it is smooth, its flow is given by ${\overline \Phi}(x_{0},y_{0},q_{0},p_{0})=(x(t),y(t),q(t),p(t))$
   \begin{gather*} y(t)=y_{0},\; x(t)=x_0+\int_{0}^{t} \frac{\partial \overline H}{\partial y}(y_{0},q(t),p(t)) dt, \\ \dot q (t)= \frac{\partial \overline H}{\partial p}(y_{0},q(t),p(t)), \; \dot p (t)= -\frac{\partial \overline H}{\partial q}(y_{0},q(t),p(t))
 \end{gather*}
  \item It is not true anymore that $\overline H$ depends only on the time-one map of $H$. 
  \item More generally, using Theorem \ref{Thm-4.5} \ref{Thm-4.5-3}, we may prove  properties of  $\mathcal A_{x}$  analogous to the properties of $\mathcal A$ stated in Theorem \ref{Main-theorem}. The projector ${\mathcal A}_{x}$ is not invariant by symplectic maps. It is however invariant by fiber-preserving Hamiltonian symplectic maps: if $\psi(x,y,q,p)=(\psi_{(q,p)}(x,y), \psi_{2}(q,p))$ we have $${\mathcal A}_{x}(H\circ \psi)(y,q,p)={\mathcal A}_{x}(H)(y,\psi_{2}(q,p))$$
\end{enumerate} \end{remarks}
\subsection{Homogenized Hamilton-Jacobi equations}
Our theorem has some interesting applications to generalized
solutions of evolution Hamilton-Jacobi equations. Consider the equation:
\begin{equation} \label{Eq-HJ}
\tag{$HJ$}
\left\lbrace
\begin{array}{ll}
& \frac{\partial}{\partial t} u (t,q) + H (t,q, \frac{\partial}{\partial
q}u (t,q)) = 0\\
& u (0,q) = f(q)\dispdot
\end{array}
\right.
\end{equation} 
where $t\in \mathbb{R} , q \in T^{n}$ and $H\in C^\infty( {\mathbb R}/ \mathbb Z \times T^*T^n) $.

\medskip

Smooth solutions to such equations are only defined for $t$ less than
some $T_{0}$. In general, solutions exhibit shocks: $\left \Vert D^2u(q)
\right \Vert_{C^{0}([0,T]\times T^{n},\mathbb{R}, {\mathbb R} )}$ blows-up as $t$ goes to
$T_{0}$.

\medskip

There are essentially two types of generalized solutions for such
equations~: viscosity solutions (cf. \cite{Crandall-Lions}, \cite{Barles}\cite{Bardi-Capuzzo-Dolcetta}) and variational
solutions (cf. \cite{Sikorav-pc}, \cite{Chaperon1}, \cite{Viterbo-Ottolenghi}, \cite{Viterbo-HJ}). These two solutions do not coincide in
general, with one notable exception: when the Hamiltonian is
convex in $p$ (cf. \cite{Zhukovskaia-2}, \cite{Roos, WQ1}).

\medskip

From \cite{L-P-V} it follows that if $H$ is coercive in $p$, and $u_{k}$ is
the viscosity solution  of
$$
(HJ_{k})
\left\lbrace
\begin{array}{ll}
& \frac{\partial}{\partial t} u_{k} (t,q) + H (kt, kq,
\frac{\partial}{\partial
q} u_{k}(t,q)) = 0\\
& u_{k} (0,q) = f(q).
\end{array}
\right.
$$
the sequence $(u_{k})_{k\geq 1}$ converges to $\overline u$, the viscosity solution of
$$
(\overline{HJ})
\left\lbrace
\begin{array}{ll}
& \frac{\partial}{\partial t} \overline{u} (t,q) + \overline{H}
(\frac{\partial}{\partial q} \overline{u} (t,q)) = 0\\
& \overline{u} (0,q) = f(q).
\end{array}
\right.
$$
Our theorem, together with results by  Humili\`ere (cf. \cite{Humiliere} section 6, in particular Proposition 6.1) implies that this extends to the non-coercive case, provided
$u_{k}$ is the variational solution and $\overline{H}$ is given by our
main theorem.
We now state the more general proposition, yielding the analog of \cite{L-P-V} :
\begin{proposition} \label{cor-3.5}
Let $H \in C^{0}([0,1]\times T^{*} T^{n+m}, {\mathbb R} )$ be either coercive (i.e. $\lim_{ \vert (y,p) \vert \to \infty} H(t,x,y,q,p) = +\infty$ ) or compactly supported, $f\in
C^{0}(T^{n+m}, {\mathbb R} )$ and $u_{k}$  the variational solution of ($HJP_{k}$):
 \begin{gather*}
 \tag{$HJP_{k}$}
\left\lbrace
\begin{array}{ll}
& \frac{\partial}{\partial t} u_{k} (t,x,q) + H (kt,kx,q,
\frac{\partial}{\partial
x} u_{k}(t,x,q), \frac{\partial}{\partial
q} u_{k}(t,x,q)) = 0\\
& u_{k} (0,x,q) = f(x,q).
\end{array}
\right.
\end{gather*}
where $(x,q)\in T^n \times T^m$
Then $\mathop{\lim}\limits_{k\to+\infty} u_{k}(t,x,q) = \overline{u} (t,x,q)$ where
convergence is uniform on compact time intervals and $\overline u$ is the variational
solution of $(\overline{HJP})$.
 \begin{gather*}
\tag{$\overline{HJP}$}
\left\lbrace
\begin{array}{ll}
& \frac{\partial}{\partial t} \overline u (t,x,q) + \overline H (q,
\frac{\partial}{\partial
x} \overline u(t,x,q), \frac{\partial}{\partial
q} \overline u_{k}(t,x,q)) = 0\\
& \overline u (0,x,q) = f(x,q).
\end{array}
\right.
\end{gather*}

More precisely, for $(x,q)$ in a bounded set, there is a sequence $ \varepsilon _{k}$ going to zero, such that for all $t\geq 1$
$$ \vert u_{k}(t,x,q)- {\overline u}(t,x,q) \vert \leq \varepsilon_{k} t\dispdot $$

\end{proposition}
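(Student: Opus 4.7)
The plan is to combine the partial homogenization of Theorem \ref{3.4} with the continuity of the variational-solution operator with respect to the $\gamma$ topology on Hamiltonians, along the lines of Humili\`ere's extension recalled just before the statement. First I would reduce to the case of smooth, compactly supported $H$ and $f$, using that variational solutions depend continuously on the Hamiltonian in the $\gamma$ topology and on the initial datum in the $C^{0}$ topology, and that both $H$ and $f$ can be approximated in these topologies by smooth compactly supported data. With this reduction, Theorem \ref{3.4} directly produces $\widehat\gamma(H_k,\overline H)\to 0$, and it remains to convert this into the claimed quantitative convergence of variational solutions.

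In more detail, $u_k(t,\cdot)$ is obtained by taking a generating function $S^{k}_{t}$ for the Lagrangian $\psi^{t}_{H_k}(\mathrm{graph}(df))\subset T^{*}T^{n+m}$ (lifted to $T^{*}(T^{n+m}\times T^{n+m})$ as in Section 2.1), and extracting the minmax critical value associated to the class $\mathbf{1}\otimes\mathbf{1}$; likewise for $\overline u$ with $\psi^{t}_{\overline H}$. Since such minmax selectors are $1$-Lipschitz with respect to $\gamma$, one obtains a pointwise bound
$$
|u_k(t,x,q)-\overline u(t,x,q)|\;\leq\;\widehat\gamma\bigl(\psi^{t}_{H_k}(\psi^{t}_{\overline H})^{-1}\bigr).
$$
Set $\delta_k=\widehat\gamma(\psi^{1}_{H_k}(\psi^{1}_{\overline H})^{-1})$; Theorem \ref{3.4} gives $\delta_k\to 0$. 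Since the Hamiltonians in $(HJP_k)$ and $(\overline{HJP})$ are time-independent, $\psi^{n}_{H}=(\psi^{1}_{H})^{n}$ for integer $n$, and iterating the triangle inequality together with bi-invariance of $\widehat\gamma$ (Proposition \ref{2.5}) yields $\widehat\gamma(\psi^{n}_{H_k}(\psi^{n}_{\overline H})^{-1})\leq n\delta_k$. Interpolating across the fractional part of $t$ and absorbing it into a slightly enlarged sequence $\varepsilon_k\to 0$, we recover $|u_k-\overline u|\leq\varepsilon_k t$, from which $C^{0}$-uniform convergence on compact time intervals is immediate.

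The main obstacle, as I see it, is precisely the passage from a qualitative $\gamma$-convergence of the time-one maps to a bound linear in $t$. The argument crucially uses the time-independence of the Hamiltonian in $(HJP_k)$, so that $\psi^{t}$ factors through iterated composition of the time-one map, and the bi-invariance of $\widehat\gamma$ provided by Proposition \ref{2.5} to ensure that errors add rather than compound multiplicatively; without bi-invariance, conjugating one factor by successive powers of $\psi^{1}_{\overline H}$ would be uncontrolled. A secondary technical point is checking that Humili\`ere's continuity for variational solutions carries the same linear-in-$t$ rate, so that splicing the approximation step with the homogenization estimate does not introduce an extra loss in $t$.
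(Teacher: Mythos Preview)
Your proposal is correct and follows essentially the same route as the paper's proof: bound $|u_k-\overline u|$ by $\gamma(\varphi_k^t\,\overline\varphi^{\,-t})$ via the $1$-Lipschitz property of the minmax selector (which the paper attributes to Humili\`ere), then exploit the subadditivity $\gamma(\varphi^n\psi^{-n})\le n\,\gamma(\varphi\psi^{-1})$ coming from bi-invariance and the triangle inequality (this is exactly the computation in the proof of Lemma~\ref{4.7} that the paper invokes). The paper phrases the last step as $\sup_{t\in[0,k]}\gamma(\varphi_k^t\overline\varphi^{-t})\le k\sup_{t\in[0,1]}\gamma(\varphi_k^t\overline\varphi^{-t})$, which is your iterated estimate with the supremum over $[0,1]$ playing the role of your $\delta_k$; your explicit handling of the fractional part and of the reduction to smooth compactly supported data are details the paper leaves implicit.
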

The next three sections will be devoted to the proof of our main
theorem, first in the ``standard case'', then in the ``partial homogenization'' setting.

\begin{remark} \label{rem-4.7}
Note that according to  \ref{Main-properties-3} in Theorem \ref{Main-properties}   we have ${\mathcal A}(-H)= -{ \mathcal A }(H)$. This is a statement that typically   does  not  hold in the case of viscosity solutions, since if $u(t,x)$ is a viscosity solution associated to $H$, $u(-t,x)$ is not in general a viscosity solution associated to $-H$. 
\end{remark} 
\section{Proof of the main theorem}\label{Main}

Let us introduce the reader to the main steps of the proof.
We denote by $\phi_{k}^t$ the flow of $H_k(t,q,p)=H(kt, kq,p)$ for $H \in \Ham_c(T^*T^n)$. In the first part of subsection \ref{subsec:reform},  we shall construct a \GFQI     of the graph of the $\phi_{k}^t$, starting from a \GFQI     of the graph of the  flow $\phi^{t}=\phi_{1}^t$.

Our proof will then be  split in two parts:

\begin{itemize}
\item Finding a candidate $\overline \phi^{\; t}$ for the $\gamma$-limit of $\phi_{k}^{\; t}$, or equivalently a candidate $\overline H$ for the $\gamma$-limit of $H_k$

\item  Showing that the $\gamma$-limit of $\phi_{k}^{t}$ is indeed $\overline \phi^{\; t}$.

\end{itemize}

Remember from Section \ref{Sect-3.2} that if $S(q,p,\xi)$ is a \GFQI     for $\Gamma (\varphi)$, $c(\alpha \otimes 1(p), \varphi)$ is defined as $c(\alpha, S_p)$ where $S_p(q,\xi)=S(q,p,\xi)$. 

The first step goes along the following lines: if $H$ does  not depend on  $(t,q)$, then $c(\mu_{q}\otimes 1(p),\phi^1)=H(p)$, so\footnote{Remember that according to Proposition \ref{Prop-3.14}, spectral numbers are well defined on $\gclDHamc {T^*T^n}$.} if $H_k$ is a sequence of autonomous Hamiltonians $\gamma$-converging to $\overline H$, we must have that  $$\lim_{k\to \infty }c(\mu_{q}\otimes 1(p),\phi_{k}^1)={\overline H}(p)\dispdot $$  We shall thus try to define $\overline H$ using this formula, and we shall first prove that this limit exists. This is the object of the second part of subsection \ref{subsec:reform} and is proved in proposition \ref{4.6}. We thus get a candidate ${\overline\varphi}^{1}$ for the $\gamma$-limit of $\phi_{k}^1$.

The second step is more delicate, and is dealt with in subsection \ref{eop}. The formula obtained for the \GFQI     of $\phi_{k}^{t}$ yields the following inequality   valid for any Hamiltonian map $\alpha$

$$\lim_{k \to \infty}\inf c (\mu\otimes 1 , \varphi_{k} \alpha ) \leq c (\mu\otimes 1 ,\overline\varphi
\alpha )$$ and proved in proposition \ref{Prop-5.15}.

We must then prove the reverse inequality. This requires us to use the previous inequality with $\varphi^{-1}$ instead of $\varphi$, the difficulty being that we do not know {\it a priori} that  $\overline{\varphi^{-1}}=\overline \varphi^{-1}$. The proof of this equality is the object of section \ref{new-section}. 

Finally note that we do not know\footnote{In the case of continuous Hamiltonians and flows, i.e. the case of the group $Hameo$, this is proved in \cite{Viterbo-unique} (see also \cite{B-S}). I owe this remark to Vincent Humili\`ere and Nicolas Vichery.}, in  general, whether the  convergence of a sequence  of compactly supported Hamiltonians $(H_{k})_{k \geq 1}$ follows from the convergence of
the sequence of flows $(\varphi_{k}^t)_{k\geq 1}$. Indeed, it could happen that in the completion $\gclDHamc{ T^*T^n}$,  the same family of maps $\overline\varphi^t$, is obtained from two different Hamiltonians i.e. is the image of two different elements of $\gclHamc{T^*T^n}$ by the extension to the completions of the exponential map $H \longrightarrow \varphi_{H}^t$. However, two distinct compactly supported continuous integrable Hamiltonians $H_{1}(p), H_{2}(p)$ cannot have the same flow, as is proved in corollary \ref{Cor-app-A}  in Appendix \ref{Appendix-A} (this also follows from a much more general theorem of Humili\`ere, Leclercq and Seyfaddini, see \cite{H-L-S}).

\subsection{Reformulating the problem and finding the homogenized
  Hamiltonian} \label{subsec:reform}

First of all, we shall assume we are dealing with a compactly supported  autonomous
Hamiltonian $H\in \Ham_c (T^*T^n)$. We shall see in subsection \ref{naut}  that the general case
reduces to this one.

Similarly let $\varphi^{t}_{k}$ be the flow associated to $H_{k}(q,p)=
H(kq,p)$ set $\varphi_{k} = \varphi_{k}^{1}$ and $\varphi=\varphi_1^1$. 

 We first compute
$\varphi_{k}^t$ as a function of $\varphi^t$.
The map $\rho_k(q,p)=(kq,p)$ defined on $T^*T^n$ is not invertible. Nevertheless its lift $\widetilde \rho_k: T^* {\mathbb R}^n \longrightarrow T^* {\mathbb R}^n$ is invertible. 
If we denote by $\widetilde \varphi$ any lift of $\varphi$, then $\widetilde \rho_k^{-1}\widetilde\varphi \widetilde \rho_k$ is $ \mathbb Z^n$-equivariant, that is 
$$\widetilde \rho_k^{-1}\widetilde\varphi \widetilde \rho_k (q+\nu,p)=\widetilde \rho_k^{-1}\widetilde\varphi \widetilde \rho_k(q,p)+ (\nu,0)$$
for $\nu \in \mathbb Z^n$. It therefore descends to a diffeomorphism of $T^*T^n$. However this diffeomorphism depends on the choice of the lift $\widetilde \varphi$. Since 
$\varphi$ is the time-one of a Hamiltonian isotopy, we may choose for $\widetilde\varphi$ the lift starting from the identity. With this choice we shall write by abuse of notation  $\rho_k^{-1}\varphi  \rho_k$ for the symplectomorphism of $T^*T^n$ induced by $\widetilde \rho_k^{-1}\widetilde\varphi \widetilde \rho_k$.

\begin{lemma}\label{Lemma-5.1} Let $\rho_{k}: T^*T^n \longrightarrow T^*T^n$ be defined by $\rho_k(q,p) = (kq,p)$, then with the above convention, we have
  $\varphi_{k}^{t} = \rho_k^{-1} \varphi^{kt} \rho_{k}$.
\end{lemma}
 \begin{proof}  The map $\rho_{k}$ is conformally symplectic, hence
$$
dH_{k}(z)\xi = dH(\rho_{k}(z)) d\rho_{k}(z) \xi = \omega
\left(X_{H}(\rho_{k}(z)), d\rho_{k} (z)\xi\right)
$$
$$
= (\rho^{*}_{k}\omega) (d\rho_{k}(z)^{-1} X_{H} (\rho_{k}(z)),\xi)
$$
Since $\rho^{*}_{k}\omega = k\omega$, we get
\begin{eqnarray*}
X_{H_{k}} (z) & = & k \left((\rho_{k})_{*} X_{H}\right) (z)\\
& = & (\rho_{k})^{*} (k X_{H})(z)
\end{eqnarray*}
The flow of $kX_{H}$ is $\varphi^{kt}$, hence the flow of
$(\rho_{k})^{*} (k X_{H})$ is $\rho^{-1}_{k}
\varphi^{kt}\rho_{k}$. \end{proof}

From now on we shall write $\varphi_k= \rho_k^{-1}\varphi^k \rho_k$.
We are thus looking for the $\gamma$-limit of
$\rho^{-1}_{k}\varphi^{k}\rho_{k}$.

Note that we may replace $\varphi = \varphi^{1}$ by
$\varphi^{1/r}$ for some fixed integer $r$. Indeed, if $\rho^{-1}_{k}
\varphi^{k/r} \rho_{k}$ $\gamma$-converges to $\psi$, we have that
$$
\rho^{-1}_{k} \varphi^{k}\rho_{k} = \rho_{r} \left( \rho_{kr}^{-1}
  \varphi^{kr/r} \rho_{kr} \right) \rho^{-1}_{r}
$$
$\gamma$-converges to $\rho_{r} \psi \rho^{-1}_{r}$. If our theorem is
proved for $\varphi^{1/r}$, $\psi$ will be generated by a
Hamiltonian depending only on the $p$ variable. We easily check that
in this case
$$
\rho_{r} \psi \rho^{-1}_{r} = \psi^{r}.
$$
In other words, $\rho^{-1}_{k} \varphi^{k}\rho_{k}$ $\gamma$-converges to
$\psi^{r}$.

\begin{remark} 
Note that $\rho_k\varphi\rho_k^{-1}$ is not well defined since $\widetilde\rho_k\widetilde\varphi\widetilde\rho_k^{-1}$ is not $ \mathbb Z^n$ equivariant. However the 
conjugacy $\rho_r(\rho_{kr}^{-1}\varphi\rho_{kr})\rho_r^{-1}$ is well defined : one may check that the lift is indeed $ \mathbb Z^n$-equivariant. Similarly for  $\psi$ the flow of an integrable Hamiltonian, $h(p)$, the map $\rho_r \psi\rho_r^{-1}$ is indeed well-defined, and equal to $\psi^r$ as stated. Of course by a continuity argument this extends to the case where $h$ is only continuous. 
\end{remark} 

For simplicity we shall assume from now on  that $\varphi$ is $C^{1}$-close to the
identity, so that it lifts to a Hamiltonian diffeomorphism $\widetilde\varphi$ of
$T^{*}\mathbb{R}^{n}$, $C^{1}$-close to the identity. The graph of the map $\widetilde\varphi$ has a generating function
 $S$  defined on $T^{*}T^{n}$ and compactly supported. We identify $q \in {\mathbb R}^n$ with its projection in $T^n$, and $P\in {\mathbb R} ^n$. By abuse of notation, we shall still denote by $S$ the lift of $S$ to $T^* {\mathbb R}^n$. 

The function  $S$ defines
$\widetilde\varphi$ by the relation

$$
\widetilde\varphi \left(q, P-\frac{\partial S}{\partial q} (q,P)\right )
=\left (q- \frac{\partial S}{\partial P} (q,P), P\right)
$$

This means that the graph $\widetilde\Gamma (\varphi)$ of $\widetilde\varphi$  in
$\overline{T^{*} \mathbb{R}^{n} }\times T^{*} \mathbb{R}^{n} \simeq
T^{*} \Delta_{\mathbb{R}^{2n}}$, has the generating function ${S}(q,p)$ that is the lift of a compactly supported function defined on $T^{*}T^{n}$.

In other words, setting $\widetilde \varphi (q,p)=(Q,P)$ we have

\begin{gather*} \left \{
\begin{array}{ll} P-p= \frac{\partial {S}}{\partial q}
  (q,P) \\ q-Q=\frac{\partial {S}}{\partial P}
  (q,P)
\end{array} \right .
\end{gather*}

Similarly when $\varphi$ is not $C^1$-close to the identity. Then $S$ has extra variable $\xi \in V$  and $S(x,P;\xi)$ is quadratic at infinity. Then for $(q,P;\xi)$ such that $\frac{\partial S}{\partial \xi }(q,P;\xi)=0$ we have
\begin{gather*} 
\widetilde\varphi \left(q, P-\frac{\partial S}{\partial q} (q,P;\xi)\right )
=\left (q- \frac{\partial S}{\partial P} (q,P;\xi), P\right) 
\end{gather*} 

We now give the composition law for generating functions, due to Chekanov (cf. \cite{Chekanov}).

\begin{lemma}\label{Lem-5.2} Let $\varphi_{1} , \varphi_{2}$ be
  Hamiltonian maps having ${S}_{1} ,{S}_{2}$ defined respectively on $ T^n\times {\mathbb R} ^n\times V_1$ and $ T^n\times {\mathbb R} ^n\times V_2$ as
  generating functions,  compactly supported functions on $T^*T^n$.
  Then $\widetilde{\varphi_{1}\circ \varphi_{2}}$ has the
  generating function
 \begin{gather*}
{S}(q_{1}, p_{2} ; q_{2} ,p_{1},\xi_1, \xi_2 ) ={S}_{1} (q_{1},p_{1},\xi_1) + {S}_{2} (q_{2}
,p_{2},\xi_2) + \langle p_{1} - p_{2} , q_{2} - q_{1} \rangle 
\end{gather*}
where $q_{1}\in {\mathbb R}^n, q_{2}\in {\mathbb R}^n, p_{1},p_{2}\in {\mathbb R} ^n$. 
\end{lemma}

\begin{remark} Let us point out the following
\begin{enumerate} \label{rem-5.4}
\item \label{GFQI-real} The above function $S$ is a \GFQI    , since the sum of two functions (in different variables) with derivatives at bounded distance from the derivative of a quadratic form
is at bounded distance from the derivative of a quadratic form. Note that according to Brunella (\cite{Brunella}, see also \cite{Viterbo-Montreal}, prop. 1.6), $S$ may be deformed to a generating function that is equal to a quadratic form outside a compact set. 
 
\item Note that in the above lemma, $q_{2}\in {\mathbb R} ^n$ is identified with its projection on $T^n$. 
We may remark that if we set $q_{2} = q_{1} + u$, we have
$$S'(q_1,p_2;u,p_2)=
{S} (q_{1} , p_{2} ; q_{1} + u , p_{2} ) =
{S}_{1} (q_{1} , p_{2} ) + {S}_{2} (q_{1}+ u ,
p_{2}) + \langle p_2-p_1, u\rangle
$$
so that ${S'}$ is a \GFQI     of the same Lagrangian as $S$ (since we did a fiber-preserving change of variable), and is the lift of a function defined on $T^{*}T^{n}\times {\mathbb R}^{2n}$ ($q_{1}\in T^{n}, p_{2}\in {\mathbb R}^{n}, u,v \in {\mathbb R}^{n}$).
\end{enumerate} 
\end{remark} 
\begin{proof} This is a straightforward  computation and we refer to \cite{Chekanov}, Theorem 4.1 (see also \cite{Chaperon2} for example). 

\end{proof}

More generally, we get:
\begin{lemma}\label{4.3}
The map $\widetilde{\varphi}^{k}$ has the generating function
 \begin{gather*}
{S}_{k}(q_{1} , p_{k} ; p_{1} , q_{2},p_{2},\cdots, q_{k-1} , p_{k-1}
,q_{k}) =\\
{\Sigma}_{k} (q_{1} , p_{k} ; p_{1} , q_{2}, p_{2} ,\cdots,
q_{k-1} , p_{k-1} , q_{k}) + B_{k} (q_{1}, p_{k} ; p_{1} , q_{2} ,p_{2},\cdots, q_{k-1} , p_{k-1} , q_{k})
\end{gather*}
where
\begin{gather*}
{\Sigma}_{k} (q_{1}, p_{k} ; p_{1} ,q_{2},p_{2} , \cdots ,
q_{k-1}, p_{k-1} , q_{k}) =
\sum_{j=1}^{k} {S}(q_{j} , p_{j})
\end{gather*}
and
$$
B_{k} (q_{1}, p_{k}; p_{1} , q_{2} ,\cdots, q_{k-1} , p_{k-1} ,
q_{k}) =
\sum_{j=1}^{k-1} \langle p_{j}, q_{j+1}-q_{j} \rangle + \langle p_k, q_1-q_k\rangle
$$

\end{lemma}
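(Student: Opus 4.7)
The approach is induction on $k$, using the composition formula of Lemma 4.2 as the inductive engine.

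For the base case $k=1$, the stated formula reduces to $\widehat{S}_{1}(q_{1},p_{1})=S(q_{1},p_{1})+\langle p_{1},q_{1}\rangle$, which is precisely the generating function $\widehat{S}$ of $\widetilde{\varphi}$ introduced just before Lemma 4.2.

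For the inductive step, I would decompose $\widetilde{\varphi}^{k}=\widetilde{\varphi}\circ\widetilde{\varphi}^{k-1}$ and apply Lemma 4.2 with outer factor $\widetilde{\varphi}$ (generating function $\widehat{S}(q_{1},p_{1})$) and inner factor $\widetilde{\varphi}^{k-1}$ (generating function given by the inductive hypothesis). Before substituting, the inner's variables need to be relabeled so that its ``final position'' slot is identified with the intermediate coordinate $q_{2}$ sitting between the two factors, its ``initial momentum'' slot with the overall initial momentum $p_{k}$, and its remaining fiber variables with $(p_{2},q_{3},p_{3},\ldots,q_{k})$; this shift by one in paper's index convention ensures the inner iterate labels line up with the outer labels. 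Lemma 4.2 then yields
\[
\widehat{S}_{k}=\widehat{S}(q_{1},p_{1})+\widehat{S}_{k-1}(q_{2},p_{k};p_{2},q_{3},\ldots,q_{k})-\langle p_{1},q_{2}\rangle,
\]
introducing exactly the new fiber pair $(p_{1},q_{2})$ together with the Chekanov coupling $-\langle p_{1},q_{2}\rangle$. Substituting $\widehat{S}=S+\langle p_{1},q_{1}\rangle$ and $\widehat{S}_{k-1}=\Sigma_{k-1}+Q_{k-1}$, the sum of $S$-terms becomes $\Sigma_{k}$, and collecting the remaining quadratic contributions—the old boundary $\langle p_{k},q_{2}\rangle$ from $Q_{k-1}$, the telescoping interior sum, the new piece $\langle p_{1},q_{1}\rangle$ from $\widehat{S}$, and the coupling $-\langle p_{1},q_{2}\rangle$—reorganizes, possibly after an elementary change of fiber variables, into the nearest-neighbor telescoping quadratic form $Q_{k}$ with boundary term promoted to $\langle p_{k},q_{1}\rangle$.

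The main obstacle is strictly bookkeeping: the two-sided index convention of Lemma 4.3, in which $q_{j}$ labels the iterate counted from the final end while $p_{j}$ labels the matching momentum slot, must be translated cleanly between the inner $(k-1)$-step formula and the outer $k$-step one. One must also verify that $Q_{k}$ is non-degenerate as a quadratic form in the full set of fiber variables, so that $\widehat{S}_{k}$ is indeed a GFQI in the sense of Definition 2.1; this follows from the non-degeneracy of $Q_{k-1}$ together with the hyperbolic pairing $-\langle p_{1},q_{2}\rangle$ added by Chekanov's formula, which pairs up the two new fiber variables.
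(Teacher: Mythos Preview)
Your proposal is correct and follows exactly the approach the paper itself takes: the paper's proof is the single sentence ``The proof follows immediately by induction from the previous lemma,'' and your write-up simply makes that induction explicit, including the index relabeling and the bookkeeping needed to match $Q_{k}$. Your caveat about a possible elementary fiber-variable change is well placed, since the telescoping form of $Q_{k}$ in the statement and the form one obtains directly from iterating Lemma~4.2 differ by such a change (compare with the alternative expression for $Q_{k}$ given just after the lemma).
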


\begin{proof}  By induction from  the formula in Lemma \ref{Lem-5.2} 
 \begin{gather*}
{S}(q_{1}, p_{2} ; q_{2} ,p_{1} ) ={S}_{1} (q_{1},p_{1} ) + {S}_{2} (q_{2}
,p_{2}) + \langle p_{1} - p_{2} , q_{2} - q_{1} \rangle .
\end{gather*}

\end{proof}

\begin{remarks}
\begin{enumerate} 
\item  In the sequel  $(q_{j},p_{j})\in T^* {\mathbb R} ^n$, even though,  by our usual abuse of notations, we identify $q_{j}$ with its projection on $T^n= {\mathbb R} ^n/ {\mathbb Z}^n$.
  Note however that for $\nu \in {\mathbb Z}^n$, we have that  $\Sigma_{k}$ and $S_k$ are invariant by 
$(q_{j})_{1\leq j \leq n} \longrightarrow (q_{j}+ \nu )_{1\leq j \leq n}$. We emphasize that the $q_j$ and $p_j$ are vector coordinates (i.e. each $q_j$ and each $p_j$ is a vector in $ {\mathbb R}^n$).
\item Note that this formula is a discretization of the action functional (up to sign). The sum of the $S(q_j,p_j)$ corresponds to $\int Hdt$ while the quadratic term corresponds to $\int p\dot q dt$. 

\end{enumerate} 
\end{remarks} 
Again, ${\Sigma}_{k}$ is defined on
$(T^{*}T^{n})^{k}$, while ${B}_{k}$ is defined on
$(T^{*}\mathbb{R}^{n})^{k}$. Note also that $B_k$ is the discretization of $\int_{S^1}p\dot q dt$, so that our expression is the discretization of the Maupertuis action $\int_{S^1}p\dot q - H dt$.  Finally we have

\begin{lemma}\label{Lemma-5.7}
Let $\varphi$ a Hamiltonian diffeomorphism of $T^*T^n$  generated by $S(q,p)$. Then $\varphi_{k} =
\rho_{k}^{-1} \varphi^{k} \rho_{k}$ is generated by ${F}_{k}$ given by
$$
{F}_{k} (q_{1} ,p_{k} ; p_{1} ,\cdots, q_{k-1} , p_{k-1} , q_{k} ) =
$$
$$
\frac{1}{k}{\Sigma}_{k} (k q_{1}, p_{k} ; p_{1} ,\cdots, k
q_{k-1} , p_{k-1} , k q_{k} )
+  {B}_{k} (q_{1}, p_{k} ; p_{1} ,\cdots, q_{k-1}, p_{k-1} ,q_{k}) \dispdot
$$
\end{lemma}
\noindent
\begin{proof}  Indeed an elementary computation shows  that if $S(q,p;\xi)$ is a generating function for
$\psi : T^* {\mathbb R}^n \longrightarrow  T^* {\mathbb R}^n$, then $\frac{1}{k} S (kq , p; \xi)$ generates  $\rho^{-1}_{k} \psi \rho_{k}$.

\medskip

Thus in our case, we expect the generating function

$$
\frac{1}{k} {\Sigma}_{k} (k q_{1} , p_{k} ; p_{1} ,
q_{2},\cdots, q_{k-1} , p_{k-1} , q_{k})
$$
$$
+ \frac{1}{k} {B}_{k} (k q_{1} , p_{k} ; p_{1} , q_{2},\cdots, q_{k-1} ,
p_{k-1} , q_{k})\dispdot
$$
But the fiber-preserving change of variable $q_{j} \mapsto kq_{j} \; (j\geq
2)$ transforms this generating function into
$$
F_{k} (q_{1} , p_{k} ; p_{1} ,q_{2},\cdots, q_{k-1},p_{k-1} ,q_{k}) =
$$
$$
\frac{1}{k} {\Sigma}_{k} (kq_{1} , p_{k} ; p_{1}, kq_{2}
,\cdots, kq_{k-1} , p_{k-1} , q_{k})
$$
$$
+ \frac{1}{k} {B}_{k} (kq_{1} ,p_{k}; p_{1} , k q_{2} ,\cdots, kq_{k-1}
, p_{k-1} , q_{k}) \dispdot
$$
Because the second term is quadratic, we easily check that it is equal to
$$
 {B}_{k} (q_{1} ,p_{k} ; p_{1} , q_{2} , \cdots , q_{k-1} , p_{k-1} ,
q_{k})\dispdot$$

\end{proof}
\begin{definition}\label{4.5}  We set to simplify our notations
$$
x = q_{1} , y = p_{k} \ ,\ \xi = (p_{1} , q_{2},\cdots, q_{k-1} ,
p_{k-1}, q_{k})\dispdot
$$
We define  
$$F_k: {\mathbb R}_x^n\times ({\mathbb R}^n)_y^* \times {\mathbb R}_{\xi}^{2n(k-1)}  \longrightarrow {\mathbb R} $$
 \begin{gather*}  \label{def-F} F_k(x,y;\xi)=\\
  \frac{1}{k}\left [ S(kx,p_{1})+ \sum_{j=2}^{k-1} S(kq_{j},p_{j}) + S(kq_{k},y)\right ]+B_{k}(x,y,\xi)\end{gather*} where $B_k(x,y,\xi)$ is defined by 
  $$B_k(q_1,p_k;p_2,...,q_k)=\sum_{j=1}^{k-1}\langle p_{j}, q_{j+1}-q_{j}\rangle + \langle p_{k},q_{1}-q_{k}\rangle$$

Let $\mu_x$ be the fundamental class in the torus $T^n$ (which variable is denoted here by $x$). We then define
  $${h}_{k} (y) = c (\mu_{x} , {F}_{k,y})=c(\mu_{x} \otimes 1(y) ,{F}_{k})$$
  where
${F}_{k,y} (x;\xi) = {F}_{k} (x,y; \xi)$. 
\end{definition}

\begin{remarks} \label{red-normal}
Let us make the following remarks
\begin{enumerate} \item As long as we write $c(\mu_{x} \otimes 1(y) , S)$ for a generating function $S$, there is no ambiguity.
However, if $\Lambda$ is the Lagrangian associated to $S$, and $(\Lambda)_y$ is the reduction of $\Lambda$ at $y$, having \GFQI     $S_y$, 
 writing an expression like $c(\alpha , (\Lambda )_{y})$ requires some care, since $S$ is defined up to a
constant, and this constant yields a coherent choice of a \GFQI     for
$(\Lambda)_{y}$ for each $y$, so that the $c(\alpha, (\Lambda)_{y})$ are well-defined up to the same constant for all values of the parameter $y$, and not up to a function of $y$ as one could expect.
Indeed, a choice of $S$, a  \GFQI     for $\Lambda$  defines a \GFQI     for $(\Lambda)_{y}$, by $S_{y}= S(y, \bullet )$.

 Moreover, for similar reasons, even $c(\beta, (\Lambda)_y)-c(\alpha, (\Lambda)_y)$ depends on  the global $\Lambda$ and not only on its reduction $(\Lambda)_y$. 
 
\item Note also that for $\nu=(\nu_{1},...,\nu_{k})=(\nu_{1},\overline \nu)\in {\mathbb Z}\oplus {\mathbb Z} ^{k-1}$, we have  $F_{k}(x,y;\xi)=F_{k}(x+\nu_{1},y, \overline \nu + \xi )$, where $\overline\nu+ \xi=(p_{1},q_{2}+\nu_{2},...., p_{k-1},q_{k}+\nu_{k})$. Indeed, this periodicity is obvious for the terms containing $S$, and the quadratic term is
$$B_k(q_1,p_k;p_2,...,q_k)=\sum_{j=1}^{k-1}\langle p_{j}, q_{j}-q_{j+1}\rangle + \langle p_{k},q_{1}-q_{k}\rangle$$ remembering that $x=q_{1},y=p_{k}$  for which the periodicity is easily checked. 

\item Note that since $\varphi_{k}^{t}$ equals the identity outside a compact set of the cotangent bundle, the function $\xi \mapsto F_{k}(x,y,\xi)$ will have, for $y$ large enough, a single critical point with critical value equal to zero (by normalization).  According to remark \ref{rem-5.4}, \ref{GFQI-real}, we could deform $F_{k}$ to be exactly quadratic for $y$ large enough, but this is not really useful, since for such values of $y$ topologically there is no way to distinguish $\xi \mapsto F_{k}(x,y,\xi)$ from a quadratic form (the topology of the sub-level sets will coincide).

\item \label{red-normal-4} The analogue of the above formula still holds if $\varphi$ is not assumed to be $C^1$-small. Then its graph $\overline \Gamma (\varphi)$ has a generating function $S(q,p;\zeta)$, on $ {\mathbb R}_x^n\times ({\mathbb R}^n)_y^*\times E \longrightarrow {\mathbb R} $   and  $\varphi_k^1$ has generating function 
$$F_k: {\mathbb R}_x^n\times ({\mathbb R}^n)_y^* \times {\mathbb R}_{\xi}^{2n(k-1)} \times E^k  \longrightarrow {\mathbb R} $$ where $\xi$ is as in Definition \ref{4.5}
\begin{gather*}  \label{def-F2} F_k(x,y;\xi, \zeta_1,..., \zeta_k)= \\   \frac{1}{k}\left [ S(kx,p_{1},\zeta_1)+ \sum_{j=2}^{k-1} S(kq_{j},p_{j},\zeta_j) + S(kq_{k},y,\zeta_k)\right ]+B_{k}(x,y,\xi) \dispdot\end{gather*} 
 \end{enumerate} 
\end{remarks}

Our first step will be to prove
\begin{proposition}\label{4.6}
The sequence $({h}_{k})_{k\geq 1}$ is a precompact sequence for the  $C^0$ topology.
 \end{proposition}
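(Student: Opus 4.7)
The plan is to apply the Arzel\`a--Ascoli theorem on $\mathbb{R}^n$. The compact support of $H$ gives uniform vanishing of $h_k$ outside a fixed ball $B_R$: if $\mathrm{supp}(H) \subset T^n \times B_R$, then for $|y| > R$ the Hamiltonian vanishes on the slice $\{p = y\}$, so $\varphi_k^t$ acts as the identity there and the fiber generating function $F_{k,y}$ produces the zero section, yielding $h_k(y) = 0$ for all $k$. Uniform boundedness in $y, k$ follows from the Hofer-type bound on spectral invariants: since $H_k(t,q,p) = H(kt,kq,p)$ has the same $L^1_t L^\infty$-norm as $H$ (by $1$-periodicity in $t$ and in $q$), one obtains $\sup_{k,y} |h_k(y)| \le \int_0^1 \|H(t,\cdot)\|_\infty\,dt$.

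The crux is uniform equicontinuity. The essential observation is that although $H_k$ oscillates more and more in $q$ as $k\to\infty$, the conjugation $\varphi_k = \rho_k^{-1}\varphi^k\rho_k$ by the conformally symplectic rescaling $\rho_k(q,p) = (kq,p)$ absorbs the $O(k)$ growth of $d\varphi^k$: one has $d\varphi_k = d\rho_k^{-1} \circ d\varphi^k \circ d\rho_k$ bounded uniformly in $k$ (for instance, for the shift $\varphi(q,p) = (q + f'(p), p)$ one computes directly $d\varphi_k = d\varphi$). Granting this, I would control $|h_k(y) - h_k(y+v)|$ by the symplectic displacement of the commutator $\tau_v \varphi_k \tau_{-v} \varphi_k^{-1}$, where $\tau_v(q,p) = (q, p-v)$ translates in the fiber: the uniform bound on $d\varphi_k$ forces $C^0$-displacement $\le C|v|$ for this commutator (uniform in $k$), and Proposition~\ref{2.6}.(2) then yields $|h_k(y) - h_k(y+v)| \le C R |v|$ on $B_R$, independent of $k$.

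The main obstacle is the uniform control of $d\varphi_k$. A naive attempt to read the $y$-dependence off the explicit \GFQI $F_k$ from Lemma~\ref{4.4} fails: at critical points of $F_k$ the displacement $q_k - q_1$ grows linearly in $k$ (recording the full trajectory of $\varphi^k$ over time $k$), and one picks up a spurious factor $k$ in any direct Lipschitz estimate on $h_k$. The rescaling by $\rho_k$ is precisely what cancels this growth at the level of the rescaled flow $\varphi_k$, and the commutator/displacement estimate transfers the cancellation to the spectral invariant. Arzel\`a--Ascoli then delivers the desired $C^0$-precompactness of $(h_k)$.
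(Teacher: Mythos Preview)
Your compact-support and boundedness remarks are fine; the gap is in the equicontinuity step. The assertion that $d\varphi_k = d\rho_k^{-1}\, d\varphi^k\, d\rho_k$ is uniformly bounded in $k$ is not correct in general. For a generic Hamiltonian the linearized flow $d\varphi^k$ grows \emph{exponentially} in $k$ (positive Lyapunov exponents), whereas conjugation by $\rho_k$ only introduces the polynomial factors $k^{\pm 1}$ in the off-diagonal blocks: writing $d\varphi^k=\begin{pmatrix}A&B\\C&D\end{pmatrix}$ one gets $d\varphi_k=\begin{pmatrix}A&B/k\\kC&D\end{pmatrix}$, and there is no reason for $A,D,kC$ to stay bounded. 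Your shear example $\varphi(q,p)=(q+f'(p),p)$ is misleading precisely because it is integrable, so $d\varphi^k$ grows only linearly and the factor $1/k$ exactly cancels the single off-diagonal growth. Without the $C^1$ bound, the $C^0$-displacement of the commutator $\tau_v\varphi_k\tau_{-v}\varphi_k^{-1}$ is not controlled by $|v|$ uniformly in $k$, and your appeal to Proposition~\ref{2.6}(2) breaks down.

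The paper avoids any $C^1$ control on $\varphi_k$. It computes $dh_k(y)$ directly: at a (generically smooth) selection $(q(y),\xi(y))$ realizing the min-max value $h_k(y)=F_k(q(y),y;\xi(y))$, the derivative in $y$ is just $\partial F_k/\partial y$ at that point, and from the generating-function identities this equals the $q$-displacement $q-Q_k(q,p)$ of the time-one map $\varphi_k$. That displacement is bounded by integrating the ODE $\dot q_k(t)=\tfrac{\partial H}{\partial p}(kq_k(t),p_k(t))$ over $t\in[0,1]$, giving $|q-Q_k(q,p)|\le C:=\sup\big|\tfrac{\partial H}{\partial p}\big|$, manifestly independent of $k$. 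Thus each $h_k$ is $C$-Lipschitz and Arzel\`a--Ascoli applies. The point is that one needs only a uniform \emph{$C^0$} displacement bound on $\varphi_k$ in the $q$-direction, which comes for free from the Hamiltonian equations, not a $C^1$ bound on $\varphi_k$, which is false.
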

The proposition will  follow from Ascoli-Arzela's
theorem, once we prove the following
\begin{lemma}\label{Lemma-5.10}
The sequence $({h}_{k})_{k\geq 1}$ is equicontinuous and uniformly bounded.
\end{lemma}

\begin{proof}
Indeed let $\widetilde\varphi_k$ be the lift of $\varphi_{k}=\rho^{-1}_{k}
\varphi^{k}\rho_{k}$ to $T^{*}\mathbb{R}^{n}$. It has support in some
tube
$$
T^{*}_{A} \mathbb{R}^{n} = \left\{ (q,p) \in T^{*}\mathbb{R}^{n} \mid \
  \vert   p\vert   \leq A\right\}\dispdot
$$
Now remember from \cite{Viterbo-STAGGF} (section 2, pages 690-693) that $c(\alpha, F_{k,y})$ is  a critical value of $F_{k,y}$. Thus for each $y$ there exists $x(y) , \xi(y)$ such that
$$
\frac{\partial {F}_{k}}{\partial x} (x(y),y; \xi(y))=0, \;\;\quad
\frac{\partial {F}_{k}}{\partial \xi} (x(y),y,\xi(y))=0
$$
$$
{F}_{k}(x(y),y,\xi(y)) = {h}_{k}(y)\dispdot
$$
Moreover, we may assume $\varphi$ is generic, so that  the map $y\mapsto (x(y),\xi(y))$ is
smooth on a set $W$,  the complement of some codimension one subset (see \cite{Viterbo-Ottolenghi} and also \cite{Roos, WQ1}, Appendix 2 (theorem 13.1) of \cite{NCMT}). Thus for $y$ in $W$
$$
d {h}_{k}(y) = \frac{\partial}{\partial y} {F}_{k}
(x(y),y, \xi(y))
$$
$$
= x (y) - X_{k} (x(y),y)
$$
where $X_{k}$ is defined by
$$
\widetilde\varphi_{k}(x,y) = (X_{k} (x,y) , Y_{k} (x,y))\dispdot
$$
The quantity $x(y)-X_{k}(x(y),y)$ can be estimated as follows~: the
first coordinate of the flow $\widetilde\varphi_{k}^{t}$ satisfies
$$
\dot{x}_{k}(t)= \frac{\partial H}{\partial y} (k x_{k}(t), y_{k}(t))
$$
hence $\vert   \dot{x}_{k}(t)\vert   $ is bounded by $C =\sup\left\{ \left\vert
  \frac{\partial H}{\partial p} (x,y) \right \vert \mid  (x,y) \in T^{*}T^{n}\right\}$  which is finite since $H$ is compactly supported. 
This implies that  $\vert x - X_{k} (x,y)\vert \leq C$ hence in the complement of $\Sigma$ the inequality
$$
\left \vert  d {h}_{k}(y)  \right \vert= \left \vert   \frac{\partial}{\partial y} {F}_{k}
(x(y),y,\xi(y)) \right \vert \leq C
$$
holds. Since ${h}_{k}$ is continuous, this implies that  it is $C$-Lipschitz.

For the uniform boundedness, let $C$ be a bound for $ \vert S \vert $. Then according to Definition \ref{4.5} $ \vert F_k(x,y;\xi)-B_k(x,y,\xi)\vert \leq C$. Since $c(\mu_x, B_k)=0$ we get 
$ \vert  c(\mu_x, F_k) \vert =\vert h_k (y) \vert \leq C$. 
\end{proof}

From Ascoli-Arzela's theorem and the above Lemma \ref{Lemma-5.10}, we infer that the sequence $({ h}_k)_{k\geq 1}$ is relatively compact in the $C^0$ topology. In other word it has a $C^0$-converging subsequence, and so does any of its subsequences. We now argue as follows : consider a subsequence $(h_{k_{\nu}})_{\nu \geq 1}$ of $({ h}_k)_{k\geq 1}$ $C^0$-converging to $h_{\infty}$. We are going to prove that $(\varphi^{1}_{k_{\nu}})_{\nu\geq 1}$ $\gamma$-converges to $\overline\varphi_{\infty}$, the time-one flow of $h_{\infty}$. We still need to prove that the whole sequence $(\varphi^{1}_{k})_{k \geq 1}$ $C^0$-converges to $\overline\varphi_{\infty}$, but this follows\footnote{We use here the fact that in a metric space if every subsequence of a sequence $(x_k)_{k\geq 1}$ has a converging sub-subsequence with limit $x$, then the sequence itself converges to $x$.} from :
\begin{proposition}\label{Prop-5.12}
If a subsequence of $(\rho_{k}^{-1} \varphi^{k}\rho_{k})_{k\geq 1}$ has a $\gamma$-limit $\overline \varphi$, then any other $\gamma$-converging subsequence has the same $\gamma$-limit. 
\end{proposition}

\begin{proof}
We start with 
\begin{lemma}\label{Lem-5.13}  For any $\varphi, \psi$ in ${\DHam_c}(T^{*}T^{n})$:
\begin{equation} \label{1of4.7}
\gamma (\varphi^{k}\psi^{k}) \leq k \gamma (\varphi \psi)
\end{equation} 
\begin{equation} \label{2of4.7}
\gamma (\rho^{-1}_{k} \varphi \rho_{k}) = \frac{1}{k} \gamma (\varphi)
\dispdot 
\end{equation} 
\end{lemma} 
\begin{proof} 
Indeed, we may write
$$
\varphi^{k} \psi^{k} = \varphi \psi \left (\psi^{-1} (\varphi \psi) \psi\right )
\left (\psi^{-2} (\varphi \psi) \psi^{2}\right ) ... \left (\psi^{-(k-1)} (\varphi\psi) \psi^{k-1}\right )
$$
Since each factor is conjugate to $\varphi \psi$, and we have $k$
factors, property (\ref{1of4.7}) follows immediately from conjugation invariance of $\gamma$ and the triangle inequality. 
 Property (\ref{2of4.7}) follows from  the fact that if $S(q,P;\xi)$ is the \GFQI for $\varphi$ then $R_k(q,P;\xi)=\frac{1}{k}S(k\cdot q, P;\xi)$ is the \GFQI for $\rho_k^{-1}\varphi\rho_k$ and  it is easy to check that $c(\alpha , R_k)= \frac{1}{k}c(\alpha, S(q,P;\xi))$ (this is a manifestation of the scaling property of $\gamma$ by conformal conjugation, see equation (\ref{conf}) in section \ref{Sec-gf-summary}). 
\end{proof} 
Now we prove that the sequence $(\rho_{k}^{-1}\varphi^{k}\rho_{k})_{k\geq 1}$ cannot have two distinct limit points. 
Indeed, let us assume we have two infinite sets of integers, $A,B$ such that there exists $\overline \sigma_{1}\neq \overline \sigma_{2}$ and a sequence $ \varepsilon_{k}$ converging to $0$with the property 
$$\forall k\in A\; \gamma (\rho_{k}^{-1}\varphi^{k}\rho_{k},\overline\sigma_{1})\leq \varepsilon _{k}$$
$$\forall k\in B\; \gamma (\rho_{k}^{-1}\varphi^{k}\rho_{k},\overline\sigma_{2})\leq \varepsilon _{k}$$

Then for any integer $q$,  $$\gamma (\rho_{kq}^{-1} \varphi^{kq}\rho_{kq}, \overline \sigma_1)= \gamma (\rho_{q}^{-1}(\rho_{k}^{-1}\phi^{k}\rho_{k})^{q}\rho_{q}(\rho_{q}^{-1} \overline\sigma_1^{-q}\rho_{q}))$$ since $$\overline \sigma_1= \rho_{q}^{-1} \overline\sigma_1^{q}\rho_{q}$$
This last equality follows from the fact that for an integrable Hamiltonian, $H(p)$, we obviously have  $H_k=H$. 

But using  (\ref{2of4.7}), we get for $k\in A$ \begin{gather*}
\gamma (\rho_{kq}^{-1} \varphi^{kq}\rho_{kq}, \overline \sigma_{1})= \gamma (\rho_{q}^{-1}(\rho_{k}^{-1}\phi^{k}\rho_{k}) ^q\rho_{q}(\rho_{q}^{-1} \overline\sigma_{1}^{-q}\rho_{q})) =
  \gamma (\rho_{q}^{-1}((\rho_{k}^{-1}\phi^{k}\rho_{k}) ^q \overline\sigma_{1}^{-q}) \rho_{q})
 \leq \\ \frac{1}{q} \gamma ((\rho_{k}^{-1}\phi^{k}\rho_{k})^{q} \overline\sigma_{1}^{-q}) \leq  q \cdot \frac{1}{q} \gamma (\rho_{k}^{-1}\phi^{k}\rho_{k}\overline \sigma_{1}^{-1})\leq \varepsilon_{k}
\end{gather*}

Similarly we get that for $k$ in $B$, and any $q$, we have 

$$\gamma (\rho_{kq}^{-1} \varphi^{kq}\rho_{kq}, \overline \sigma_{2})\leq \varepsilon _{k}$$

As a result, for $k\in A, m\in B$ we have 

$$\gamma (\rho_{km}^{-1} \varphi^{km}\rho_{km}, \overline \sigma_{1})\leq \varepsilon _{k}$$
$$\gamma (\rho_{km}^{-1} \varphi^{km}\rho_{km}, \overline \sigma_{2})\leq \varepsilon _{m}$$

This  implies $\gamma (\overline\sigma_{1}, \overline \sigma_{2}) \leq  \varepsilon _{m}+ \varepsilon _{k}$ and since the right hand side goes to zero as $k,m$ go to infinity, we get that $\overline \sigma_{1}=\overline \sigma_{2}$ and this concludes the proof. 
\end{proof}

\subsection{The Main steps of the proof}\label{eop}

In the previous section we obtained a continuous  function ${h}_{\infty}(p)$, as the limit of some subsequence $(h_{k_{\nu}}(p))_{\nu \geq 1}$. Since $h_{\infty}$ is
continuous, according to
Humili\`{e}re (cf. our proposition \ref{Prop-3.13} above, or \cite{Humiliere} proposition 1.3) it has a ``generalized flow''  that is generates a one-parameter subgroup  in $\gclDHamc{T^*T^n}$. In other words   the map $H \longrightarrow \varphi_{H}^t$ extends to a map between the $\gamma$-completions, $$ \gclHamc{T^*T^n} \longrightarrow \gclDHamc{T^*T^n}\dispdot $$ Note that $\gclDHamc{T^*T^n}$ inherits the group structure of ${\DHam}_{c}(T^*T^n)$. We denote by ${\overline\varpsi}_\infty^{\; t}$ the
``generalized flow'' associated to $h_{\infty}$. Note that the element $\overline\varpsi_\infty^{t}$ is not a map: it is only an element in $\gclDHamc{T^*T^n}$.
\begin{proposition}\label{Prop-5.14}
For all $t\geq 0$, the element $\overline\varpsi_\infty^{\;t}$ is the  $\gamma$-limit of $(\varphi_{k}^{t})_{k\geq 1}$ defined by $\varphi_{k}^{t}=
\rho^{-1}_{k} \varphi^{kt} \rho_{k}$ : we have
$$
\lim_{k\to +\infty} \gamma (\varphi_{k}^{t}, \overline\varpsi_\infty^{\;t} )=0\dispdot
$$
\end{proposition}
From now on we assume $t=1$, and denote $\varphi_{k}=\varphi_{k}^{1}$, and $\overline\varpsi_\infty=\overline\varpsi_\infty^{\;1}$. The proof of Proposition \ref{Prop-5.14} will be based on the following  two propositions

\begin{proposition}\label{Prop-5.15} There exists a sequence $(k_{\nu})_{\nu\geq 1}$ going to infinity such that for any $\alpha$ in
$\gclDHamc{T^*T^n}$, we have
$$
\limsup_{\nu \to \infty}c_{+}(\varphi_{k_{\nu}} \alpha ) \leq c_{+}(\overline\varpsi_\infty
\alpha )
$$
\end{proposition}

\begin{proposition}\label{Prop-5.16}

Consider a subsequence of $(\varphi_{k_{\nu}})_{\nu\geq 1}$ such that (all limits are uniform in $p$)
$$
\lim_{\nu \to \infty} c (\mu \otimes 1 (p) , \varphi_{k_{\nu}}^{}) =\lim_{\nu} {h}_{k_{\nu}} (p)
= {h}_{\infty} (p)
$$
Then we have
$$
\lim_{\nu \to \infty} c (\mu \otimes 1 (p) , \varphi^{-1}_{k_{\nu}}) = - {h}_{\infty}
(p)
$$
\end{proposition}
  \begin{proof} [ Proof that Proposition \ref{Prop-5.15} and \ref{Prop-5.16} imply
Proposition \ref{Prop-5.14}.]

Indeed take $\alpha = \overline\varpsi_\infty ^{-1}$, where $\overline \varpsi_\infty=\overline\varphi^{1}$ is the limit associated by the previous subsection to some subsequence of $(k_{\nu})_{\nu \geq 1}$ still denoted  $(k_{\nu})_{\nu \geq 1}$.  By Proposition \ref{Prop-5.15}, we get
$$
\limsup_{\nu} c_+(\varphi_{k_{\nu}}  {\overline\varpsi_\infty}^{-1}) \leq c_+( \Id)= 0.
$$
and since for all $\beta$ in $\gclDHamc{T^*T^n}$,  $c_+(\beta ) \geq 0$ we get,

$$
\limsup_{\nu} c _+(\varphi_{k_{\nu}} {\overline\varpsi_\infty}^{-1})=0$$

Now we must prove $\liminf_{\nu} c _-(\varphi_{ k_{\nu}}  \overline\varpsi_\infty^{-1}) =0$, and it is enough to show that
$$
\liminf_{\nu} c_-(\varphi_{k_{\nu}}\alpha) \geq c_-(\overline\varphi_\infty \alpha)
$$
for any $\alpha$ in $\gclDHamc{T^{*}T^{n}}$.
\medskip

But according to 
 \cite{Viterbo-STAGGF}( (2) of proposition 4.2 page 697) and invariance by conjugation of $c$ (see \cite{Viterbo-STAGGF},  corollary 4.3 page 698) the formulas $$c_+(\varphi^{-1})=-c_-(\varphi)$$ and $$c_\pm (\psi\varphi\psi^{-1})=c_\pm(\varphi)$$ hold in $\DHam(T^*T^n)$. Since $c_\pm$ are obviously continuous for the $\gamma$-topology, the same formulas hold in $\gclDHamc{T^*T^n}$. 
We can thus write
$$
c_-(\varphi_{  k_{\nu}}\alpha) =
- c_+( \alpha^{-1}\varphi_{k_{\nu}}^{-1})
= - c_+( \varphi^{-1}_{k_{\nu}} \alpha^{-1})\dispdot
$$
We then apply proposition \ref{Prop-5.15} to the sequence
$(\varphi^{-1}_{k_{\nu}})$. According to proposition \ref{Prop-5.16}
$$
\lim_{\nu} c (\mu \otimes 1 (p) , \varphi^{-1}_{k_{\nu}}) = - {h}_{\infty}(p)
$$
and according to Appendix A, Corollary \ref{Cor-app-A}, $-{ h}_{\infty}(p)$ has flow $\overline\varphi_\infty^{-1}$ in the completion 
$\gclDHamc{T^*T^n}$.

\medskip
  
As a result
$$
\liminf_{k} c_-( \varphi_{k_{\nu}}\alpha) =
- \limsup_{k} c_+(\varphi_{k_{\nu}}^{-1}\alpha^{-1})
$$
$$
\geq - c _+({\overline\varpsi_\infty}^{-1}\alpha^{-1})
= c_-(\overline\varpsi_\infty \alpha)\dispdot
$$
Taking again $\alpha=\overline\varphi_\infty^{-1}$ we get 
$$\liminf_{k} c_-( \varphi_{k_{\nu}}{\overline\varpsi_\infty}^{-1}) =0$$
hence $$\lim_{k} c_-( \varphi_{k_{\nu}}{\overline\varpsi_\infty}^{-1}) =\lim_{k} c_+( \varphi_{k_{\nu}}{\overline\varpsi_\infty}^{-1}) =0$$
We thus proved that if the sequence $h_{k_{\nu}}(p)=c(\mu_{x}\otimes 1(p),\phi_{k_{\nu}})$ $C^0$-converges to $h_{\infty}$, then
$(\phi_{k_{\nu}})_{\nu \geq 1}$ $\gamma$-converges to $\overline\varpsi_\infty$. Note that conversely, if $(\phi_{k_{\nu}})_{\nu \geq 1}$ $\gamma$-converges to $\overline\varpsi_\infty$, since  $ \psi \mapsto c(\alpha, \psi)$ is $1$-Lipschitz, we have that 
$h_{k_{\nu}}(p)=c(\mu_{x}\otimes 1(p),\phi_{k_{\nu}})$ $C^0$-converges to $c(\mu_x\otimes 1(p),{\overline\varpsi_\infty})=h_{\infty}(p)$. 

Now assume there are two subsequences, $(\phi_{k_{\nu}})_{\nu \geq 1},( \phi_{l_{\nu}})_{\nu \geq 1}$ such that $c(\mu_{x}\otimes 1(p),\phi_{k_{\nu}})$ $C^0$-converges to $h^{1}_{\infty}$, while $c(\mu_{x}\otimes 1(p),\phi_{l_{\nu}})$ $C^0$-converges to $h^{2}_{\infty}$. Then we find two subsequences of $(\phi_{k})_{k\geq 1}$ $\gamma$-converging respectively to $\overline \varpsi_{1}$ and $\overline \varpsi_{2}$ (where $\overline \varpsi_{1}^{\;t}$ is the flow of $h^{1}_{\infty}$ while $\overline \phi_{2}^{\;t}$ is the flow of $h^{2}_{\infty}$). But according to Proposition \ref{Prop-5.12}, two $\gamma$-converging subsequences of $(\phi_{k})_{k\geq 1}$ must have the same limit, thus $\overline \varpsi_{1}=\overline \varpsi_{2}$. Using again the continuity of $\psi \mapsto c(\alpha,\psi)$ for $\gamma$, we have  \begin{gather*}  h^{1}_{\infty}(p)=\lim_\nu c(\mu_x\otimes 1(p), \varphi_{k_\nu})=c(\mu_x\otimes 1(p), \overline\varphi_{1})=\\ c(\mu_x\otimes 1(p), \overline\varphi_{2})=\lim_\nu c(\mu_x\otimes 1(p), \varphi_{l_\nu})=h^{2}_{\infty}(p) \end{gather*} 

As a result we proved that 
\begin{enumerate} [(a)]
  \item \label{a} the sequence $(h_k)_{k\geq 1}$ is precompact for the $C^0$ topology
\item \label{b} If a subsequence $(h_{k_\nu})_{\nu\geq 1}$ $C^0$-converges to $\overline h$ then the sequence  $(\varphi_{k_\nu})_{\nu\geq 1}$ converges to $\overline\varphi_\infty$
\item\label{c} any two converging subsequences of $(\varphi_{k})_{k\geq 1}$ have the same limit
\end{enumerate} 
We claim that this implies  that $(\varphi_{k})_{k\geq 1}$ converges, and its limit is $\overline \varphi=\overline \varphi_\infty$, the flow of $h_\infty$ defined by $h_\infty(p):=\lim_k c(\mu_x\otimes 1(p),  \varphi_k)$.

Indeed from (\ref{a}) we get the existence of a limit $\overline h$ for some subsequence, from (\ref{b}) that then the corresponding subsequence of $(\varphi_k)_{k\geq 1}$ converges to $\overline\varphi$ and since by (\ref{c}) $\overline \varphi$ is unique, so is $\overline h$ (according to Proposition \ref{Prop-A.1} from the Appendix). 
Now we claim $(h_k)_{k\geq 1}$ converges. Indeed, assume otherwise: there would be a subsequence $k_\nu$ such that $\gamma (h_{k_\nu},\overline h) \geq \varepsilon $ for all $\nu$. But the subsequence $h_{k_\nu}$ being precompact has a converging subsequence, whose limit must be different from $\overline h$ a contradiction with (\ref{c}). Now we know $(h_k)_{k\geq 1}$  converges to $\overline h$, hence so does $(\varphi_k)_{k\geq 1}$  and it must have limit $\overline \varphi_\infty$  the flow of $\overline h_\infty$ (in $\gclDHamc{T^*T^n}$).

This concludes our proofs of Proposition \ref{Prop-5.14}, modulo the proofs of Propositions \ref{Prop-5.15}
and \ref{Prop-5.16}.
 \end{proof}

We note the following 

\begin{corollary} \label{Corollary-5.17}
The $\gamma$-convergence of $\varphi_k^t$ to $\varphi_\infty^t$ is uniform in $t$ on compact sets. In other words the sequence of functions $t \mapsto \gamma(\varphi_k^t\overline\varphi^{-t})$ converges uniformly on compact sets to $0$. 
\end{corollary} 
 \begin{proof} 
 This follows immediately from the fact that if $ \Vert H \Vert_{C^0} \leq C$ then $ \Vert H_k \Vert_{C^0} \leq C$ and thus $\gamma (\varphi_k^t\varphi_k^{-s}) \leq C \vert t-s \vert $.  
 So let $f_k(t)=\gamma (\varphi_k^t\overline\varphi^{-t})$, then \begin{gather*}  \vert f_k(t)-f_k(s) \vert =\vert \gamma(\varphi_k^t\overline\varphi^{-t}) - \gamma(\varphi_k^s\overline\varphi^{-s}) \vert \leq \gamma (\varphi_k^t\overline\varphi^{-t}\circ \overline\varphi^{s}\varphi_k^{-s}) \leq \\
\gamma (\varphi_k^t\varphi_k^{-s})+ \gamma (\overline \varphi^t\overline\varphi^{-s})\leq 2 C \vert t-s \vert \end{gather*} 
 Now if a sequence $(f_k)_{k\geq 1}$ of functions defined on $[0,1]$ and uniformly Lipschitz converges simply to a function $f$, then the convergence is uniform. Indeed, the sequence $(f_k)_{k\geq 1}$ is equicontinuous. It is also uniformly bounded, since $\gamma(\varphi_k)\leq \Vert H_k\Vert_{C^0}\leq C$ so $\gamma (\varphi_k^t\overline\varphi^{-t}) \leq \gamma (\varphi_k^t)+\gamma(\varphi^{-t})\leq 2C$. So by Ascoli's theorem, one can find a subsequence converging  uniformly on compact sets,  but the limit of the subsequence is necessarily $f$. And since we proved that all converging subsequences must have the same limit, the sequence is uniformly converging to $f$ on compact sets.  
 \end{proof} 
\section{Proof of proposition \ref{Prop-5.15}} First of all, it is enough to deal with the case where $\alpha \in {\DHam}_{c}(T^*T^n)$ (i.e. not in its  completion), since $\DHam_c(T^*T^n)$ is dense in $ \gclDHamc{T^*T^n}$ and $c_+$ is continuous for $\gamma$. Now we may choose
$S(x,y;\eta)$  a  \GFQI    for  $\alpha$, with $\eta$ belonging to some vector space $V$. Let $\mathbb Z^n$ act diagonally on the $(u,x)$ variables by $\nu\star (u,x)\mapsto , (u+\nu, x+\nu)$ and extend this action by the trivial action on products of $ {\mathbb R}^n_u\times {\mathbb R}^n_x$  so that $\varphi_{k}\alpha$ has the \GFQI  defined on the $\mathbb Z^n$ quotient  
$$G_k: T^n_u\times {\mathbb R}^n_v\times {\mathbb R}^n_x\times {\mathbb R} ^n_y\times V\times E_k \longrightarrow  {\mathbb R} $$
$$
G_{k} (u,v; x,y, \eta , \xi) =
S (x+u,v; \eta) + {F}_{k} (u,y; \xi)+ \langle y-v ,x\rangle
$$
where $F_{k}$ was defined in Definition \ref{4.5} and is a \GFQI     for $\varphi_k$,   $u \in T^n_u, v \in  {\mathbb R}^n_v, x\in {\mathbb R}^{n}_x, y\in {\mathbb R}^{n}_y, \xi \in E_{k}, \eta \in V$, and $E_{k}$ is the space $ {\mathbb R}^{2n(k-1)}$ as in Definition \ref{4.5}.

It will often be more convenient to switch from cohomology to homology in order to make our argument geometrically more transparent. By relative cycle we mean a chain with boundary in  $F_k^{-\infty}$ (i.e. in $F_{k}^{-c}$ for $c$ large enough). The number $c(a,S)$ was defined  for a homology class $a$ in Remark \ref{rem-3.4} \ref{rem-homology-level}. The identification of  $c(\mu_{u}\otimes 1(y), F_k)$ with $c( [T_u^n \times \{y\}], F_k)$ follows from  Appendix \ref{Appendix-B}, Prop \ref{Prop-B3}. In the sequel we denote by $C$ a relative cycle representing $z$ in $H_*(F^b, F^a)$ which means that $C \subset F^b$ and $[C, C\cap F^a]=z$ in $H_*(F^b, F^a)$. 

Thus,  by definition of $c(\mu_{u}\otimes 1(y),\varphi_{k})=h_{k}(y)$, for each $y$, there is a relative cycle
$C^-(y)$ homologous to $T_u^{n} \times \{y\}\times E^{-}_{k}$ ($x$ lives in $T^{n}$, $\xi$ in $E_{k}$, and  $E_{k}^{-}$
is the negative eigenspace for the quadratic part $B_{k}$ of $F_{k}$)  in $H_{*}(G_{k,(x,v,\eta)}^{\infty},G_{k,(x,v,\eta)}^{-\infty})= H_{*}(F_{k}^{\infty},F_{k}^{-\infty})$ such that 
$$
{F}_{k} (y, C^-(y))\leq {h}_{k}(y) +\varepsilon\dispdot
$$
(we denote by $(y, C^-(y))$ the set of $(u,y,\xi)$ such that $(u,\xi)\in
C^-(y))$. Unfortunately we may not get such an estimate if we simultaneously require  that $C^-(y)$ is to depend continuously on $y$. However, let us first assume such a continuous dependence can be achieved and $(h_{k})_{k\geq 1}$ converges to $h_{\infty}$. 
Set
\begin{gather*} 
{\overline G}_k: T^{n}_{u}\times {\mathbb R}_{v}^{n} \times \mathbb{R}^{n}_{x} \times
\mathbb{R}^{n}_{y} \times V / \mathbb Z^n\\
{\overline G}_{k} (u,v ; x, y , \eta) =
S (x+u,v;\eta) + {h}_{k}(y) + \langle y-v , x\rangle
\end{gather*} 
Again we may find  a (relative) cycle $\Gamma$ in $T^{n}_{u}\times {\mathbb R}_{v}^{n} \times (\mathbb{R}_{x}^{n})\times
(\mathbb{R}_{y}^{n}) \times V$ in the homology
class of $T^{n}_{(u,v)}\times \Delta_{x,y} \times V^{-}$, where $V^{-}$ is the negative eigenspace of the quadratic 
part\footnote{ Notice that $\Delta_{x,y}$,  the diagonal in $\mathbb{R}^{n}_{x}
\times \mathbb{R}^{n}_{y}$,  is the negative eigenspace of
$\langle y , - x\rangle$.}  of $S$,   such that   $$\sup \overline G_{k}(\Gamma) \leq c (\mu, \overline G_k) + \varepsilon = c (\mu
, \overline\varphi\alpha ) + \varepsilon$$

\medskip

Let now $\Gamma \times_{Y} C^-$ be the (relative) cycle
$$
\Gamma \times_{Y} C^-=\left\{ (u,v,x,y,\xi,\eta) \mid  (u,v,  x, y, \eta)\in \Gamma , (u,\xi) \in C^-(y)\right\}\dispdot
$$
We claim that

\begin{enumerate} 
\item \label{1i} $$\sup 
G_{k} (\Gamma \times_{Y} C^-) \leq \overline G_{k} (\Gamma) +  \varepsilon 
$$
\item \label{2i} $\Gamma \times_{Y}C^-$ is a cycle in $H_{*}(G_{k}^{\infty},G_{k}^{-\infty})$ homologous to
$$
T^{n}_{u}\times {\mathbb R}_{v}^{n}\times \Delta_{x,y} \times E^{-}_{k} \times V^{-}$$
\end{enumerate} 
so that
$$
c(\mu, \phi_{k}\alpha)=c(\mu , G_{k} ) \leq \sup  G_k(\Gamma \times_YC^-)\leq \sup  \overline G_{k} (\Gamma)+ \varepsilon  \leq c(\mu ,\overline G_{k})+ \varepsilon  \leq c(\mu ,
\varphi_{k} \alpha)+ \varepsilon
$$

Indeed, for \ref{1i} we have $$\sup G_k (\Gamma\times_YC^-)=\sup \{S(x+u,v;\eta)+F_k(u,y,\xi)+ \langle y-v,x\rangle \mid (u,v,x,y,\eta)\in \Gamma, (u,\xi)\in C^-(y)\}$$
but $F_k(u,y,\xi)\leq h_k(y)+ \varepsilon $  for $(u,\xi)\in C^-(y)$, so

\begin{gather*} \sup G_k (\Gamma\times_YC^-) \leq \\ \sup \{S(x+u,v;\eta)+h_k(y)+ \langle y-v,x\rangle \mid (u,v,x,y,\eta)\in \Gamma\}+ \varepsilon \leq \\ \sup  \overline G_k(\Gamma)+ 
\varepsilon \end{gather*} 

For \ref{2i}, we use the fact that if $\Gamma$ is homologous to $T^{n}_{(u,v)}\times \Delta_{x,y} \times V^{-}$ and $C^-(y)$ is homologous to 
$T^{n} \times \{y\}\times E^{-}_{k}$ then $\Gamma\times_YC^-$ is homologous\footnote{The general fact that if we have two fibrations, $p_X:X \longrightarrow Z, p_Y: Y \longrightarrow Z$ and two cycles $A,B$ in $X,Y$ respectively, in general position with respect to $p_X, p_Y$, then the homology class of $A\times_ZB$ only depends on the homology classes of $A$ and $B$.  This follows from the fact that, denoting $p=(p_X,p_Y):(X\times Y) \longrightarrow Z$, then $A\times_ZB=(A\times B)\cap p^{-1}(\Delta_Z)$,  so represents the class $j_! ([A\times B])$ where $j: X\times_ZY \longrightarrow X \times Y$ is the inclusion, and $j_{!}$ the umkehr map. In general we may perturb $A,B$ so that they are in general position, and then the homology class of $A\times_ZB$ does not depend n the choice of perturbations. }
 to $(T^{n}_{(u,v)}\times \Delta_{x,y} \times V^{-})\times_Y(T^{n} \times \{y\}\times E^{-}_{k})$
that is $T^{n}_{u}\times {\mathbb R}_{v}^{n}\times \Delta_{x,y} \times E^{-}_{k} \times V^{-}$.

\medskip

Let us now try to establish the above   inequality  without the assumption that we can find $C^{-}(y)$  such that $\sup F_k(y, C^-(y)) \leq h_k(y)+ \varepsilon $ depending continuously on $y$. Let the subsequence $(h_{k_{\nu}})_{\nu\geq 1}$ converges to $h_{\infty}$. We will see in the next Lemma  that we may find a continuous family  $C^{-}(y)$ such that for $k=k_{\nu}$ and $\nu$ large enough, the  estimate $$\sup 
{F}_{k} (y, C^-(y))\leq {h}_{k}(y) +\varepsilon\dispdot
$$
holds for  $y$ outside of a subset $U_{2\delta}$ where $U_{\delta}$ is a $\delta$-neighborhood of some grid in
$(\mathbb{R}^{n})^{*}$ (see Figure \ref{Fig-1}), while inside $U_{2\delta}$, $\sup {F}_{k}(y,C^-(y))\leq a$ for some
constant $a$.

The existence of $C^{-}(y)$ is a consequence of the following general result. We remind the reader that a function $F$ satisfies the Palais-Smale condition if any sequence such that $F(x_{n})$ is bounded and $\nabla F(x_{n})$ converges to zero has a converging subsequence.

 This implies that the flow of $\frac{-\nabla F}{ \vert \nabla F (x) \vert^{2} }$
is defined for all time outside a neighborhood of the set of critical points (which is compact) provided our metric is complete. In particular,  for generic $F$ (i.e. $F$ Morse and with Morse-Smale gradient flow), classes in $H^{*}(F^{b}, F^{a})$ are represented by linear combinations of unstable manifolds of critical points (see for example \cite{Laudenbach-Bismut}). As usual we denote by $F^{\infty}$ (resp. $F^{-\infty}$) the set $F^{c}=\{ x \mid F(x) \leq c \}$ for $c$ large.

\begin{lemma}
Let $F(u,x)$ be a smooth function on the product  $V\times X$ of two oriented complete Riemannian manifolds. We moreover assume both $F$ and its restriction $F_{u}$ to a fiber $\{u\}\times X$ satisfy the Palais-Smale condition and  for $u$ outside a compact set, $F_{u}$ does not depend on $u$.  

Let  $f\in C^0(V, {\mathbb R} )$ be such that  for each $u \in V$, there exists a cycle $C(u)$ representing a class in $ H_{d}(F_{u}^{+\infty},F_{u}^{-\infty})$
with $\sup F(u, C(u))\leq f(u)$.  
Moreover we assume that $H_{p}(F_{u}^{\infty}, F_{u}^{-\infty})$ vanishes for $p\geq d+1$. 

Then for any positive $ \varepsilon $ and any subset $U$ in $V$, such that each connected component of $V\setminus U$ has sufficiently small  diameter, there exists a cycle $\widetilde C$ in $H_{d+\dim(V)}(F^{\infty}, F^{-\infty})$ and a constant $a$ such that if we denote by $\widetilde C(u)$ the slice $\widetilde C \cap \pi^{-1}(u)$ ($\pi:V\times X \longrightarrow X$ is the second projection)
$$\sup F(u,\widetilde C(u)) \leq f(u)+a \chi_{U}(u)+ \varepsilon $$ where $\chi_U$ is the characteristic function of $U$.
\end{lemma}
\begin{proof} What we are doing is a constructive version of the Leray spectral sequence $H_*(V, H_*(F_u^{+\infty}, F_u^{-\infty}))$, on the term $H_n(V, H_d(F_u^{+\infty}, F_u^{-\infty}))$, which yields a class in $H_{n+d} (F^{+\infty}, F^{-\infty})$. 

Our assumptions imply that the critical values of $F_{u}$ are contained in some bounded interval (independent from $u$), $[-a/2, a/2]$. 
We may assume  $V$ is triangulated and  $U$  contains  a neighborhood of the $(n-1)$ cells ($n=\dim (V)$). In other words $V\setminus U$ is contained in the union of the top dimensional cells. We denote by $V^{p}$ the $p$-skeleton of $V$, and by $W^{p}(\delta)$ a $\delta$-neighborhood of $V^{p}$. 
Continuity of $F$ implies that if we take $\widetilde C(u)$ to be constant (i.e. independent from $u$) in each connected component of $V^{n}\setminus W^{n-1}(\delta)$, containing a connected component of $V\setminus U$, the inequality will be satisfied there. We need to extend 
$\widetilde C(u)$ for all $u$ in $V$, so that the union of the $\widetilde C(u)$ makes a singular cycle. 

For this we proceed by induction on cells of the skeleton of $V$ of decreasing dimension, so that we are going to extend $\widetilde C (u)$ for $u\in V\setminus W^{p}(\delta_{p})$ successively for $p=n-1, n-2, ...., 1, 0$. 
For the first step, we need to glue the $\widetilde C (u)$ so that they yield a cycle over the union of $n$ and $n-1$ cells (outside a neighborhood of $(n-2)$-cells). 
For this we need to look at what happens on an $n-1$-dimensional cell, $T_{i,j}$ intersection of the $n$-cells $T_{i}$ and $T_{j}$ where we have  {\it a priori} two conflicting definitions of $\widetilde C(u)$, that we denote 
by $\widetilde C_{i}(u), \widetilde C_{j}(u)$ obtained by taking for $\widetilde C (u)$ the constant value given  on each of the two $n$-cells, $T_{i}, T_{j}$. Let us  write $\widetilde C_{i}(u)-\widetilde C_{j}(u)=\partial { \Gamma}_{i,j}(u)$ which is of course possible
 since ${\widetilde C}_{i}(u)$ and ${\widetilde C}_{j}(u)$ are homologous chains in $H_{d}(F_{u}^{+\infty}, F_{u}^{-\infty})$ on $T_{i}\cap T_{j}$. Then we may glue these together to
   $$\bigcup_{u\in \mathring{T}_{i} }{\widetilde C}_{i}(u)\cup \bigcup_{u\in \mathring{T}_{j} }{\widetilde C}_{j}(u)\cup \bigcup_{u\in \mathring{T}_{i,j} }{\Gamma}^{i,j}(u)\dispdot $$ Note that this is indeed a singular chain  of dimension $n+\dim ({\widetilde C}_{i}(u))$, since $\dim (T_{i})=\dim (T_{j})=n$, $ \dim ({\Gamma}^{i,j})=\dim ({\widetilde C}_{i}(u))+1, \dim(T_{i,j})=n-1$. Thus each of the three pieces has dimension 
  $n+\dim ({\widetilde C}_{i})=n+\dim ({\widetilde C}_{j})$. 

Now of course, the maximum of $F$ over $\bigcup_{u\in T_{i} }{\widetilde C}_{i}(u)\cup \bigcup_{u\in T_{j}} {\widetilde C}_{j}(u)\cup \bigcup_{u\in T_{i,j} }{\Gamma}_{i,j}(u)$ has increased outside $V\setminus U$,  but we can always assume that $\Gamma_{i,j}\subset F^{a}$.  If this was not the case, we could push  $\Gamma_{i,j}$ down using the  gradient of $F_u$, and since there is no critical value above $a/2$, we can push it below $a$. 
As a result we have defined a cycle over the complement of the  $(n-2)$-skeleton,  contained in $F^a$. 

Now we look at the next inductive step, that is extending to the $(n-2)$-skeleton. Consider then for $T_{\alpha}$ an $(n-2)$-simplex,  that is in the boundary of  the $T_{\beta}$ for $\beta$ in some set of $(n-1)$-simplices. For each $\beta$ we have a $\Gamma_{\beta}$ obtained in the previous step. 
We claim that at any point of the $(n-2)$-simplex $T_{\alpha}$ we have $\sum_{T_{\alpha} \in \partial T_{\beta}} \partial \Gamma_{\beta}=0$. Indeed by definition $\partial \Gamma_{\beta}= \widetilde C_{i_{\beta}}(u)-\widetilde C_{j_{\beta}}(u)$ where $T_{\beta}$ is bounded by $T_{i_{\beta}}$ and $T_{j_{\beta}}$, and in the above sum each index will appear twice (of course we need oriented simplices, and the above is correct provided they are oriented in a compatible way: this is made possible by the  orientability of $V$).

 Now, we claim that $\sum_{T_{\alpha} \in \partial T_{\beta}} \Gamma_{\beta}$, which is closed by the above argument  is in fact a boundary $\partial \Gamma_{\alpha}$. Indeed, over each fiber, $u$, this is a $d+2$-cycle,  and 
 the homology of the fiber $H_{k}(F_{u}^{+\infty}, F_u^{-\infty})$ vanishes for $k\geq d+2$. Therefore in this dimension, any cycle is a boundary, and we may find $\Gamma_{\alpha}$ such that $$\partial \Gamma_{\alpha}=\sum_{T_{\alpha} \in \partial T_{\beta}} \Gamma_{\beta}$$

Now $\Gamma_{\alpha}$ is contained in $F^{c}$ and we get  $$\bigcup_{u\in \mathring{T}_{i} }{\widetilde C}_{i}(u)\cup \bigcup_{u\in \mathring{T}_{j} }{\widetilde C}_{j}(u)\cup \bigcup_{u\in \mathring{T}_{i,j} }{\Gamma}_{i,j}(u) \bigcup_{u\in \mathring{T}_\alpha} \Gamma_{\alpha}(u)$$ is a cycle over the $(n-2)$-skeleton and contained in $H_{k}(F^{a}, F^{-\infty})$. 
We must then iterate this procedure on the $n-3, ...,0$ skeleton. 
\end{proof} 

We then apply the Lemma to $F_{k}$, which clearly satisfies the assumptions with $U=U^\delta$. We then get that $\sup {F}_{k}(y , C^-(y)) \leq {h}_{k} (y) + a_k \chi^{\delta}
(y) + \varepsilon$ where $\chi^{\delta}$ is a smooth function with values in $[0,1]$,  equal to one
on $U_{\delta}$ and to zero outside $U_{2\delta}$, $a_k$ is a constant 
and $ \varepsilon$  is arbitrarily small.

Now for $\ell \geq 1$ we want an expression\footnote{Note that $G_{\ell, k}$ actually depends on the choice of $\ell$ and $k$ and not just of the product $\ell k$.} for $G_{\ell,k}$ , a \GFQI     of $\varphi_{\ell k}$, using
 the explicit formula for ${F}_{\ell k}$ obtained as in Lemma \ref{Lemma-5.7} (or rather (\ref{red-normal-4}) in Remarks \ref{red-normal}). We thus get  $$
 G_{\ell,k} (u ,v; \overline x , \overline y, \overline\xi , \eta) =
{ S}(x_{1}+u,v, \eta) +
\frac{1}{\ell} \sum_{j=1}^{\ell} {F}_{k} (\ell x_{j} , y_{j} ,
\xi_{j} ) + B_{\ell} (\overline x ,\overline y)
+ \langle y_{\ell}-v , x_{1} \rangle
$$
and $u\in T^{n}, v \in {\mathbb R}^{n}, \overline x = (x_{1},..., x_{\ell})\in ( {\mathbb R}^{n})^{\ell},   \overline y = (y_{1},...,y_{\ell})\in ( {\mathbb R}^{n})^{\ell}, \overline \xi = (\xi_{1},..., \xi_{\ell})\in (E_k)^{\ell}, \eta \in V$. 

We may now ``spread our  error terms'' $a_k\chi^{\delta}(y)$ by translating them. More precisely, for $j$ from $1$ to $\ell$,   let us choose  domains $U_{j}^{\delta} \subset {\mathbb R}^n$ yielding $\chi_{j}^\delta$ (with $\supp (\chi_{j}^{\delta})\subset U_{j}^{\delta}$),  such that\footnote{Here we measure distance with the norm $\Vert x \Vert_\infty=\sup_{1\leq j \leq n} \vert x_j\vert$, otherwise we get an unpleasant $\sqrt n$ factor.}the following holds : 
\begin{enumerate} \label{grid}
\item the connected components of ${\mathbb R}^n \setminus  U_{j}^{\delta}$ have diameter less than $\delta$
\item any $(n+1)$ distinct sets $U_{j}^{\delta}$ have empty intersection. 
\end{enumerate} 
Such $U_{j}^{\delta}$ may be constructed by taking for $U_{j}^{\delta}$ the $\delta/2$-neighborhood of $S_{j}=\{(x_{1},..., x_{n}) \in {\mathbb R}^{n}\mid \exists k \; ,x_{k}\in 2\delta({\mathbb Z}+r_{j}) \}=2\delta (S + r_j(e_1+\ldots +e_n))$, where 
\begin{enumerate} [(a)]
\item $S=\{ (x_1,....,x_n) \mid \exists k, \;  \;  x_k \in {\mathbb Z} \}$
\item $r_{1}, ..., r_{\ell}$ are distinct elements of $ {\mathbb R} / {\mathbb Z} $ such that $ \vert r_i-r_j \vert \geq \delta$ for $i\neq j$ (so it is understood that $\delta < \frac{1}{\ell}$) 
\end{enumerate} 
We refer to Figure \ref{Fig-1} for a representation of the $U_j^\delta$ when $n=2, \ell =3$. 
Clearly we have $S_{j_1}\cap S_{j_2}\cap .... \cap S_{j_n}\cap S_{j_{n+1}}=\emptyset$ provided $j_i\neq j_k$ for $i\neq k$ and the same holds for their 
 neighborhood:  $U_{j_1}^{\delta}\cap U_{j_2}^{\delta}\cap ...\cap U_{j_n}^{\delta}\cap U_{j_{n+1}}^{\delta}=\emptyset$

We now have 
$$
\sup {F}_{k}(y,C_{j}(y))\leq {h}_{k} (y) + a_{k}
\chi^{\delta}_{j} (y) + \varepsilon \dispdot
$$

Consider
\begin{gather*} 
\overline G_{\ell,k} (u, v; \overline x , \overline y, \eta)=\\
S (x_1+u,v ; \eta )+ \frac{1}{\ell} \sum_{j=1}^{\ell}
\left({h}_{k}(y_{j}) + a_{k} \chi_{j}^{\delta} (y_{j})\right)
+ B_{\ell} (\overline x ,\overline y) + \langle y_{\ell}-v , x_{1} \rangle
\end{gather*} 

 \begin{figure}[H]
 \begin{center}  \begin{overpic}[width=8cm]
 {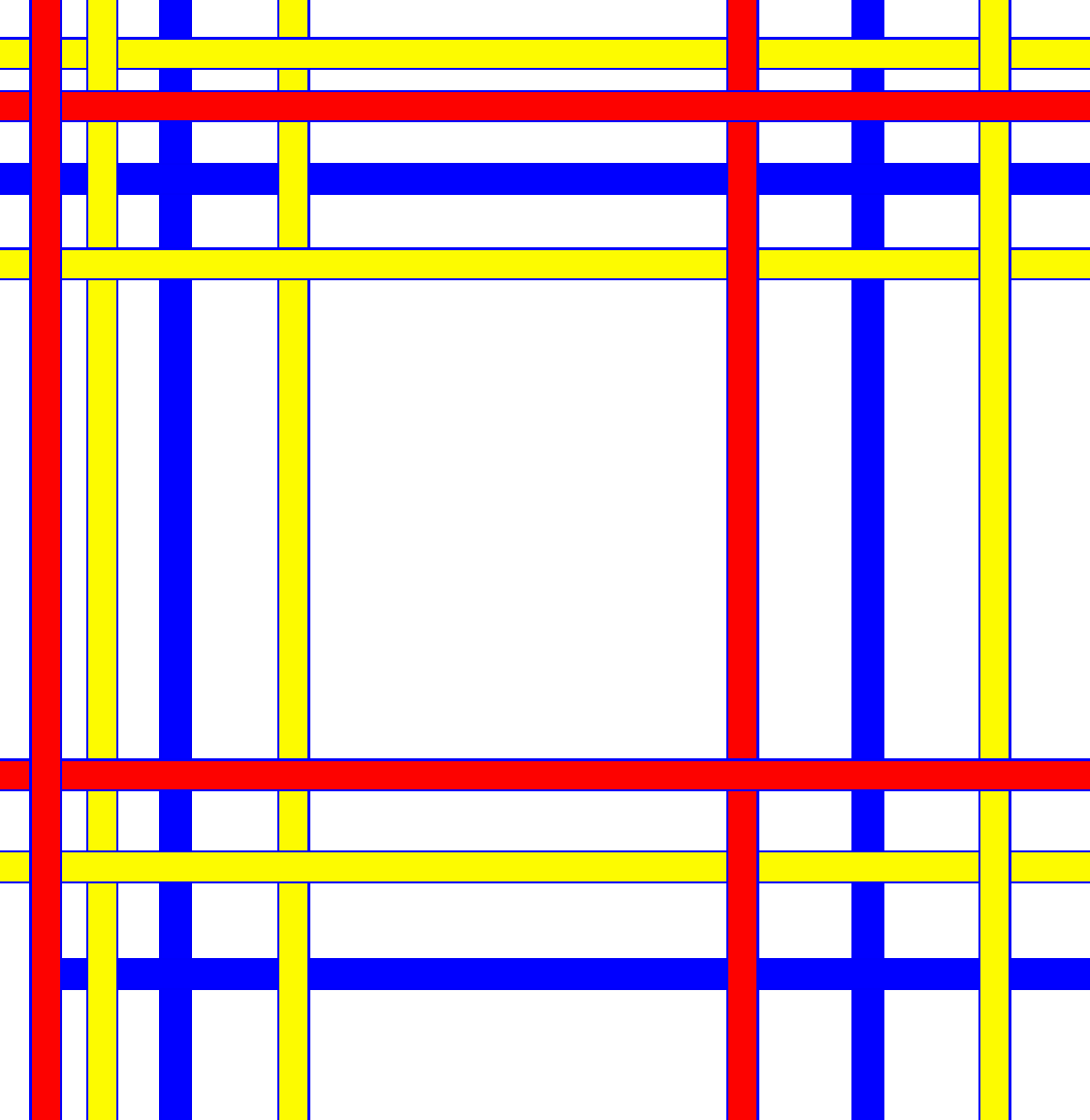}
\end{overpic}
\end{center}
\caption{An example of the sets $U_{1}^{\delta}$ (yellow), $U_{2}^{\delta}$ (red), $U_{3}^{\delta}$ (blue) for $n=2, \ell=3$ ( inspired by P. Mondrian,  \cite{Mondrian}). }
\label{U_s}\label{Fig-1}\end{figure}

Let $\Gamma$ be a cycle  in the homology class of $T^n_{(u,v)}\times \Delta_{x,y}\times V^-$, such that
$$\sup 
\overline G_{\ell,k} (\Gamma) \leq c(\mu , \overline G_{\ell,k}) + \varepsilon
$$
and let
$
(\Gamma \times_{Y} C^-[\ell])$ be defined as 
\begin{gather*} (\Gamma \times_{Y} C^-  [\ell])=\Gamma\times_Y (C_1^-\times ...\times C_\ell^-)= \\ \left\{ (u , v;\overline x, \overline y ,\overline\xi , \eta)
\mid (u , v, \overline x , \overline y, \eta) \in \Gamma , (\ell x_{j}
,\xi_{j}) \in C_{j}^- (y_{j})\right\}\dispdot
\end{gather*} 
Now $(\Gamma \times_{Y} C^-[\ell])$ is in the  homology class of $T^n_{u} \times {\mathbb R}^n_v \times \Delta_{x,y}\times (E_{k}^-)^{\ell}\times V^-$ and we
may thus infer that
$$
c(\mu, \phi_{k}\alpha)= c (\mu ,  G_{\ell,k} ) \leq  \sup G_{\ell,k} (\Gamma \times_{Y}C^-[\ell])
$$
and $ G_{\ell,k}((\Gamma \times_{Y}C^-[\ell])) \leq \overline G_{\ell,k} (\Gamma)$. We may thus conclude that
 $$ c (\mu ,  G_{\ell,k} ) \leq c(\mu, \overline  G_{\ell , k }) + 2 \varepsilon \dispdot $$

Our last step will be to prove

\begin{lemma}\label{5.4}
For each $k$ there is a constant $A_{k}$ such that the following inequality holds for all $\ell$:
$$
c (\mu , {\overline G}_{\ell ,k}) \leq c (\mu , \overline G_{k}) + \frac{A_{k}}{\ell} \dispdot
$$
\end{lemma}

\begin{proof} Indeed $\overline G_{\ell,k}$ is the generating function of
$\psi_{k,\delta,\ell}\; \alpha $ where
$$
\psi_{k,\delta,\ell}= \rho^{-1}_{\ell} \left( \psi^{1}_{k,\delta} \circ \cdots \circ
  \psi^{\ell}_{k,\delta} \right) \rho_{\ell}
$$
and $\psi^{j}_{k,\delta}$ is the time-one flow of ${h}_{k} (y) + a_{k}
\chi_{j}^{\delta} (y)$.

But since the Hamiltonians ${h}_{k} (y) + a_{k}
\chi_{j}^{\delta} (y)$ depend only on $y$, their flows commute, hence $\psi_{k,\delta,\ell}$
is the time-one flow of

$$K_{k,\delta,\ell}(y) =
\frac{1}{\ell} \left( \sum_{j=1}^{\ell} {h}_{k}(y) + a_{k}
  \chi^{j}_{\delta} (y)\right) \dispdot
$$
Now since $(n+1)$ sets $U_{j}^{\delta}$ have empty intersection, we
have that $$\forall y \in {\mathbb R}^n, \;\;  \left \vert \sum_{j=1}^{\ell} \chi_{j}^{\delta}(y) \right\vert
\leq n $$
hence
$$
\left\vert K_{k,\delta,\ell} (y) - {h}_{k}(y) \right\vert \leq \frac{n a_{k}}{\ell} \dispdot
$$
As a result, using the inequality between $\gamma$ and the $C^0$-norm of the Hamiltonian from Proposition \ref{Prop-3.12}, (1), we get
$$
\gamma ( \psi_{k,\delta,\ell} , \psi_{k}) \leq \frac{na_{k}}{\ell}\overset{def}=\frac{A_{k}}{\ell}$$
where $\psi_{k}$ is the time-one flow of ${h}_{k}(y)$ hence
$$
\gamma \left(\psi_{k,\delta,\ell} \alpha ,  \psi_{k} \alpha \right)
\leq \frac{A_{k}}{\ell}
$$
and
\begin{eqnarray*}
c(\mu , \overline G_{\ell, k}) & \leq & c (\mu ,\psi_{k,\delta,\ell} \alpha)\\
& \leq & c (\mu , \psi_{k} \alpha ) +\frac{A_{k}}{\ell}\\
& \leq & c (\mu , \overline\varphi_{k}\alpha) +\frac{A_{k}}{\ell} \dispdot \end{eqnarray*}
\end{proof}

Since by assumption, as $\nu$ goes to infinity  $(\psi_{k_{\nu}})_{\nu \geq 1}$ $\gamma$-converges to
$\overline\varphi$  we get for $k=k_{\nu}$
$$
c(\mu , \varphi_{k\ell} \alpha) = c (\mu ,  G_{\ell,k}) \leq
$$
$$
c (\mu ,\overline G_{\ell,k}) + \varepsilon \leq c (\mu ,\overline G_{k}) +
\varepsilon +\frac{A_{k}}{\ell}
$$
and since this holds for any positive $ \varepsilon $ we have 
$$
\leq c (\mu , \overline\varphi \alpha ) + \frac{A_{k}}{\ell}\dispdot
$$

Taking $\ell$ large enough, we see that there is a sequence\footnote{In fact this holds for any sequence.}  $\ell_{\nu}$ 
$$
\lim_{\nu} c(\mu , \varphi_{\ell_{\nu}\cdot k_{\nu}}\alpha)
$$
$$
\leq c (\mu , \overline\varphi \alpha )
$$
as announced.
This concludes the proof of Proposition \ref{Prop-5.15}.

  \begin{remark}

  It is important to notice that $$\overline G_{\ell , k}=S (x_1+u,v ; \eta )+ \frac{1}{\ell} \sum_{j=1}^{\ell}
\left({h}_{k}(y_{j}) + a_k \chi_{j}^{\delta} (y_{j})\right)
+ B_{\ell} (\overline x ,\overline y) + \langle y_{\ell}-v ,  x_{1} \rangle $$
   cannot be bounded from above by

  $$S (x_1+u,v ; \eta )+ \frac{1}{\ell} \sum_{j=1}^{\ell}
{h}_{k}(y_{j})
+ B_{\ell} (\overline x ,\overline y) + \langle y_{\ell}-v , x_{1} \rangle + \frac{a_k}{\ell }$$
as it is obvious by choosing  $(y_{1},..., y_{\ell})$ such that each $y_{j}$ is in $U_{j}^{\delta}$.
Our proof makes crucial use of the commutation property of the $h_{k}(y)+a\chi_{j}(y)$ and  would not hold if we replaced $\chi_{j}^{\delta}(y)$ by an analogous function $ \chi_{j}^{\delta}(x,y)$.
\end{remark}

\section{Proof of proposition \ref{Prop-5.16}}\label{new-section}

Let $\Gamma_{\varphi}$ be the graph of $\varphi$, where $\varphi$ is written  in coordinates $\varphi(x,y)=(X(x,y),Y(x,y))$ and $S_\varphi$ a \GFQI     for $\Gamma(\varphi)$
$$
\Gamma_{\varphi} = \left\{ (X(x,y) , y , y - Y(x,y) , X (x,y) - x) \mid  (x,y)\in T^{*}T^{n} \right\} \dispdot
$$
Then the reduction of $\Gamma_{\varphi}$ at $y = y_{0}$ is
$$(\Gamma_{\varphi})_{y_{0}} = \left\{ (X (x,y_{0}) , y_{0} - Y
  (x,y_{0})) \mid x \in T^n \right\}$$ that is $L_{y_{0}} - \varphi(L_{y_{0}})$ where
$$
L_{y_{0}} = \{ (x,y_{0}) \mid x \in T^{n} \} \dispdot
$$
Note that if $S_\varphi$ is a \GFQI     for $\varphi$, it is normalized, and this yields a normalization of the \GFQI     for $L_y-\varphi(L_y)$. This could very well NOT be the normalization  expected by the reader (see for example Remark \ref{rem-3.4} \ref{rem-3.4-1}).

For example for $H$ integrable, of the form $H(p)$, we have $L_y=\varphi(L_y)$, so $L_y-\varphi_H(L_y)=0_N$. However the normalization of the generating functions yields $c(\mu, L_y-\varphi_H(L_y))=H(y)$ and not $0$.

Now $c(\alpha_{x} \otimes 1(y),\varphi)=c(\alpha_{x} \otimes 1(y), S_{\varphi})
= c (\alpha , (S_{\varphi})_{y}) = c (\alpha , L_{y} -\varphi(L_{y}))$.

\begin{lemma} 
We have $$c(\mu_x\otimes1(y),\varphi_k)=c(\mu,L_y-\varphi_k(L_y))=-c(1,L_y-\varphi_k^{-1}(L_y))=-c(1_x\otimes1(y),\varphi_k^{-1}) \dispdot $$
\end{lemma} 

\begin{proof} 
Indeed, $c(\mu,L_y-\varphi_k(L_y)))=c(\mu,\varphi_k^{-1}(L_y)-L_y)$ by Hamiltonian invariance\footnote{that is $c(\alpha, L_1-L_2)=c(\alpha, \varphi(L_1)-\varphi(L_2))$ for $\varphi$ a Hamiltonian map. The proof is the same as in \cite{Viterbo-STAGGF} page 695, proof of Proposition 3.5.} of $c(\mu,L_1-L_2)$. Moreover  $c(\mu,\varphi_k^{-1}(L_y)-L_y)= -c(1,L_y -\varphi_k^{-1}(L_y))$ by corollary 2.8 page 693 of \cite{Viterbo-STAGGF}.\end{proof} 

 Denoting by $\widehat h_{k}(y)$ the number  $c(\mu_{x}\otimes 1(y), \phi_{k}^{-1})$,  to prove  Proposition \ref{Prop-5.16}  it is enough to show that $\widehat h_k(y)=c(\mu_x\otimes1(y),\varphi_k^{-1})=-c(1_x\otimes1(y),\varphi_k)$ differs from $-c(\mu_x\otimes1(y),\varphi_k)=-h_k(y)$ by a term of the size $O(1/k)$. This is the content of  
 \begin{proposition}\label{Prop-7.2}
 We have, uniformly in $y$,
$c(\mu_x\otimes 1(y),\varphi_k)-c(1_x\otimes 1(y),\varphi_k)=O(1/k)$. 
\end{proposition} 
\begin{remarks} 
\begin{enumerate} 
\item Using the proof of Conjecture \ref{conjecture} by Shelukhin (see \cite{Shelukhin}) we can prove the above Proposition as follows:   
\begin{gather*} c(\mu_x\otimes1(y),\varphi_k)- c(1_x\otimes1(y),\varphi_k)=c(\mu,L_y-\varphi_k(L_y))-c(1,L_y-\varphi_k^{-1}(L_y))=\\ \gamma (L_y-\varphi_k(L_{y}))= \frac{1}{k}\gamma (L_y-\varphi^k(L_y)) 
\end{gather*} 
But the  $\varphi^k$ have a  fixed compact support, so the same holds for $L_y-\varphi^k(L_y)$ hence according to the main theorem in \cite{Shelukhin}, $\gamma (L_y-\varphi^k(L_y))$ is bounded independently from $k$, and we conclude that $\lim_k\frac{1}{k}\gamma (L_y-\varphi^k(L_y)) 
=0$. 
\item Indeed the  proposition implies  \begin{gather*} \widehat h_{k}(y)=c(\mu_{x}\otimes 1(y), \phi_{k}^{-1})=-c(1_x\otimes1(y),\varphi_k)=\\ -c(\mu_x\otimes 1(y),\varphi_k)+O(1/k)=-h_k(y)+O(1/k) \end{gather*}

Since by definition $c(\alpha,\varphi_k)= c(\alpha,F_k)$ we have to prove that $c(\mu_x\otimes 1(y),F_k)-c(1_x\otimes1(y),F_k)=O(1/k)$. 
According to Proposition \ref{Prop-B3},  if $f$ is defined on an $n$-dimensional orientable manifold, and $\alpha \in H^q(f^b,f^a)$ 
then $$c(\alpha, f) = \sup\{ c(u,f) \mid u\in H_q(f^b,f^a),  \langle \alpha,u\rangle\neq 0\} \dispdot $$
As a result, we may in the sequel replace cohomology classes by homology classes using this property. 
\end{enumerate} 
 \end{remarks} 
\begin{proof}[Proof of Proposition \ref{Prop-7.2}]
First of all, let $u$ be a homology class in $H_d(T^n)$ represented by a map $A: X \longrightarrow T^n$ and $v \in H_1(T^n)$ be  represented by a loop $B : S^1 \longrightarrow T^n$. Denote by $u\cdot v$ the  class represented by $C: (\theta,x) \longrightarrow B(\theta)\cdot A(x)$. Here $x\cdot y$ is the product in the group $T^n$ and  $u \cdot b \in H_{k+1}(T^n)$ is then well defined, i.e. does not depend on the choice of the representatives of $u$ and $v$ : this is the Pontryagin product of $u$ and $v$. Moreover it is easy to see that if $v_1,...,v_n$ form a basis of $H_1(T^n)$, then $v_1\cdot v_2\cdot .... \cdot v_n$ is a nonzero multiple of the fundamental class\footnote{by abuse of language, we denote by $\mu$ both the fundamental homology class and the fundamental cohomology class. This should cause no confusion.}, $\mu$. Since we must compare $c(1_x\otimes 1(y),F_k)$ and $c(\mu_x\otimes 1(y), F_k)$, our proposition will follow from 
\begin{proposition} \label{Prop-7.4}
Let $A:V \longrightarrow T^n_x\times {\mathbb R}^n \times E_k$ representing the class $u\times [\{y\} \times E_k^-] \in H_{d+n(k-1)}(F_{k,y}^c,F_{k,y}^{-\infty})$, and $v\in H_1(T^n)$. Then there exists $C':S^1\times V \longrightarrow T^n\times {\mathbb R}^n \times E_k$ representing $u\cdot v \times [\{y\} \times E_k^-] \in H_{d+1+n(k-1)}(F_{k,y}^{c'},F_{k,y}^{-\infty})$ where 
$$ c' \leq c +O(1/k)\dispdot $$
\end{proposition}  
\begin{proof} In the proof we shall assume $v$ is represented by the loop $s \mapsto s\nu $ where $\nu \in {\mathbb Z}^n$. All degree one classes on the torus can be represented in such a way. 
The first step will be to modify $C$ to a cycle $\widetilde C$ in the same homology class. We denote by  $K$ a number such that $$K> \max\left\{\sup_{(q,p)\in T^*T^n} \vert S(q,p) \vert , \sup_ {(q,p)\in T^*T^n}\vert dS(q,p) \vert \right\}$$ where $S$ is the generating function for $\varphi$ as in Subsection \ref{subsec:reform}.  
We first show that relative cycles in sublevel sets can be deformed to  ``standard form''.
 \begin{lemma} \label{Lem-7.5}
 There exists a cycle $\widetilde C$ homologous to $C$ in $H_d(F_{k,y}^c, F_{k,y}^{-\infty})$  and a constant $M$ such that  
we have
\begin{enumerate}[(i)] \item \label{U}   $\widetilde C \subset \left ( \left\{(q,p) \mid  \max_j \vert p_j \vert  \leq M \right\} \cup F_{k,y}^{-4K}\right ) \bigcap F_{k,y}^c$
\item \label{V}  $\widetilde C \cap F_{k,y}^{-3K} \subset T^n\times \{y\}\times E_k^-$
\end{enumerate} 
 \end{lemma} 
\begin{proof} 
Let us choose $M$ so that  $S$ has support in the set $ \vert p \vert \leq M/2$. We are going to deform $C$ by  first using  the vector field $Z$, associated to the differential equation 
$$\dot q_j = -\chi(\vert p_j \vert ) (p_j-p_{j-1})=X_j(q,p)\; , \; \dot p_j=0\; \text{for}\; 1\leq j \leq k$$
where $\chi (\vert p_j \vert )$ vanishes for $ \vert p_j \vert \leq M/2$.  Note that $y=p_k$ is preserved. As a result if $\psi^t$ is the flow of $Z$, we have  \begin{gather*} \frac{d}{dt}F_{k,y}(\psi^t(q,p))_{\mid t=0}=  \langle dF_{k,y}(q,p) , Z(q,p) \rangle =\left\langle \frac{\partial}{\partial q}F_{k,y}(q,p), X(q,p)\right\rangle =\\ -\sum_{j=1}^{k} \chi( \vert p_j \vert ) \left\langle \frac{\partial}{\partial q_j}F_{k,y}(q,p), (p_j-p_{j-1})\right\rangle =\\ \sum_j \chi( \vert p_j \vert ) \left ( -\vert p_j-p_{j-1} \vert^2+ \left\langle \frac{\partial S}{\partial q_j}(k\cdot q_j,p_j), (p_j-p_{j-1})\right\rangle \right )=\\ - \sum_j \chi( \vert p_j \vert )  \vert p_j-p_{j-1} \vert^2 \end{gather*} 
since $S$ vanishes on the support of $\chi ( \vert p \vert )$. 
Note that the $p_j$ are integrals of the flow of $Z$, that $F_{k,y}$ is decreasing along the flow of $Z$ and that since $p_k=y$ is fixed,  if $\max_j \vert  p_j \vert  \geq M$, for $M$ large enough with respect\footnote{This is where the proof would fail if we did not fix $y$, and wanted to prove the incorrect  statement $\frac{1}{k}\gamma(\varphi^k) \longrightarrow 0$.} to $y$, the quantity $\sum_j \chi( \vert p_j \vert )  \vert p_j-p_{j-1} \vert^2$ is bounded from below by some constant $m_k>0$ (note that $m_k=O(1/k)$, but we don't care). Thus outside the region $\left\{ \max_j \vert p_j \vert  \leq M \right\}$,   $Z$ is a pseudo-gradient for $F_{k,y}$, and since the flow is complete (it is bounded by a linear quantity), it satisfies the following properties
\begin{enumerate} 
\item Setting  $\psi^t(q,p)=(Q,P)$ we have $p=P$. 
\item For $(q,p) \notin \left\{ (q,p) \mid \max_j \vert p_j \vert  \leq M \right\}$ we have  $F_{k,y}(\psi^t(q,p))\leq F_{k,y}(q,p)-m_kt$
\end{enumerate} 
As a result, if $(q,p)\in F_{k,y}^c$ we have $\psi^{ \frac{c+4K}{m_k} }(q,p)\in  \left (\left\{ \max_j \vert p_j \vert  \leq M \right\} \cup F_{k,y}^{-4K}\right ) \bigcap F_{k,y}^c$ (see Figure \ref{fig-7.1} and \ref{fig-7.2}).

We thus obtained  a cycle $\widetilde C_1=\psi^{ \frac{c+4K}{m_k} }(C)$ satisfying (\ref{U}). We now deform $\widetilde C_1$ to $\widetilde C$ as follows. 
Since $\Vert F_{k,y}-B_k \Vert \leq K$ we have $-K\leq c \leq K$, and  the inclusions $$ F_{k,y}^{-4K}\subset B_k^{-3K}\subset F_{k,y}^{-2K}\subset B_k^{-K} \dispdot $$  
Thus $\widetilde C_1 \in H_d(F_{k,y}^{+\infty},F_{k,y}^{-\infty})
= H_d(B_k^{+\infty},B_k^{-\infty})$ and $B_k^{-\infty}\simeq B_k^{-K}$, and $[\widetilde C_1] =[\A \times  \{y\}\times E_k^- ]$ in  $H_d(B_k^{+\infty},B_k^{-\infty})$ 
for some cycle $\A$ representing the class $\a$. 

This means there is a cycle $D$ such that $\partial D= \widetilde C_1 - (\A \times  \{y\}\times E_k^- )$, so $ \partial (D\cap B_k^{-3K})= \widetilde C_1 \cap B_k^{-3K} - \left ((\A \times  \{y\}\times E_k^-)\cap B_k^{-3K}\right ) + D\cap \{B_k=-3K\}$. 

Thus in $B_k^{-3K}$ we may replace $\widetilde C_1$ by the homologous cycle  $\A\times  \{y\}\times E_k^-\cap B_k^{-3K} -  D'$ where $D'\subset  \{B_k=-3K\}$. 
  But then $D'\cap F_{k,y}^{-4K}=\emptyset$ so $$((\A\times  \{y\}\times E_k^-)\cap B_k^{-3K}) \cup D')\cap F_{k,y}^{-4K}=(A\times  \{y\}\times E_k^-)\cap B_k^{-3K} \cap F_{k,y}^{-4K} \dispdot $$
Then the cycle $$\widetilde C=( \widetilde C_1\cap \{B_k\geq -3K\} )\cup D' \cup ( (\A\times  \{y\}\times E_k^-)\cap B_k^{-3K})$$ is homologous to $\widetilde C_1$.

\begin{figure}[H]
\center\begin{overpic}[width=6cm]{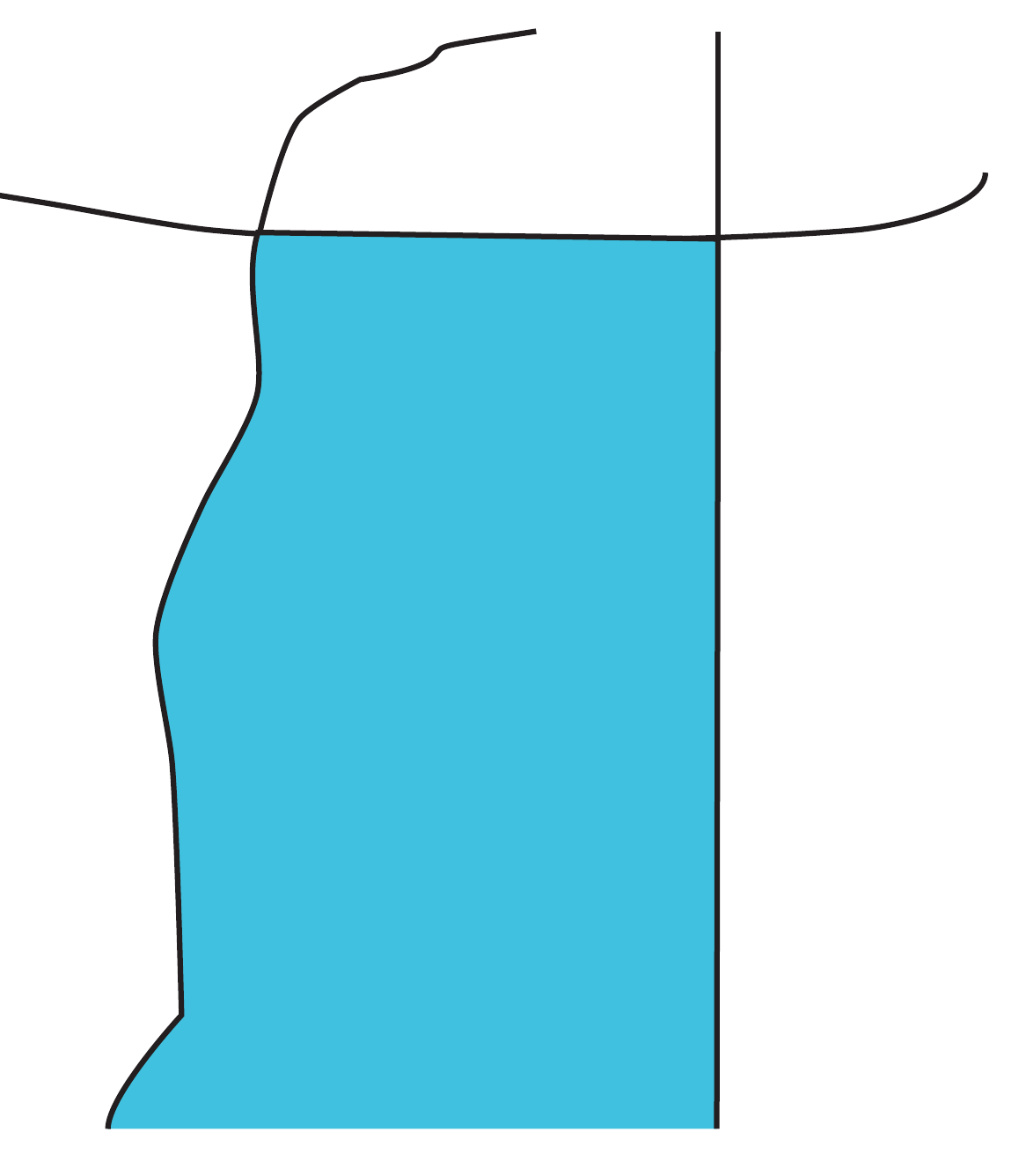}
 \put (35,100) {$\widetilde C_1$}  \put (62,98){$A\times \{y\}\times E_k^-$}
  \put (75,72) {$B_k^{-3K}$}  \put (35,47) {$D$}   
\end{overpic}
\caption{On the left: The cycles $\widetilde C_1$, $\A\times  \{y\}\times E_k^-$ and bounding cycle $D$
}\label{fig-7.1}
\end{figure}

\begin{figure}[H]
\center\begin{overpic}[width=6cm]{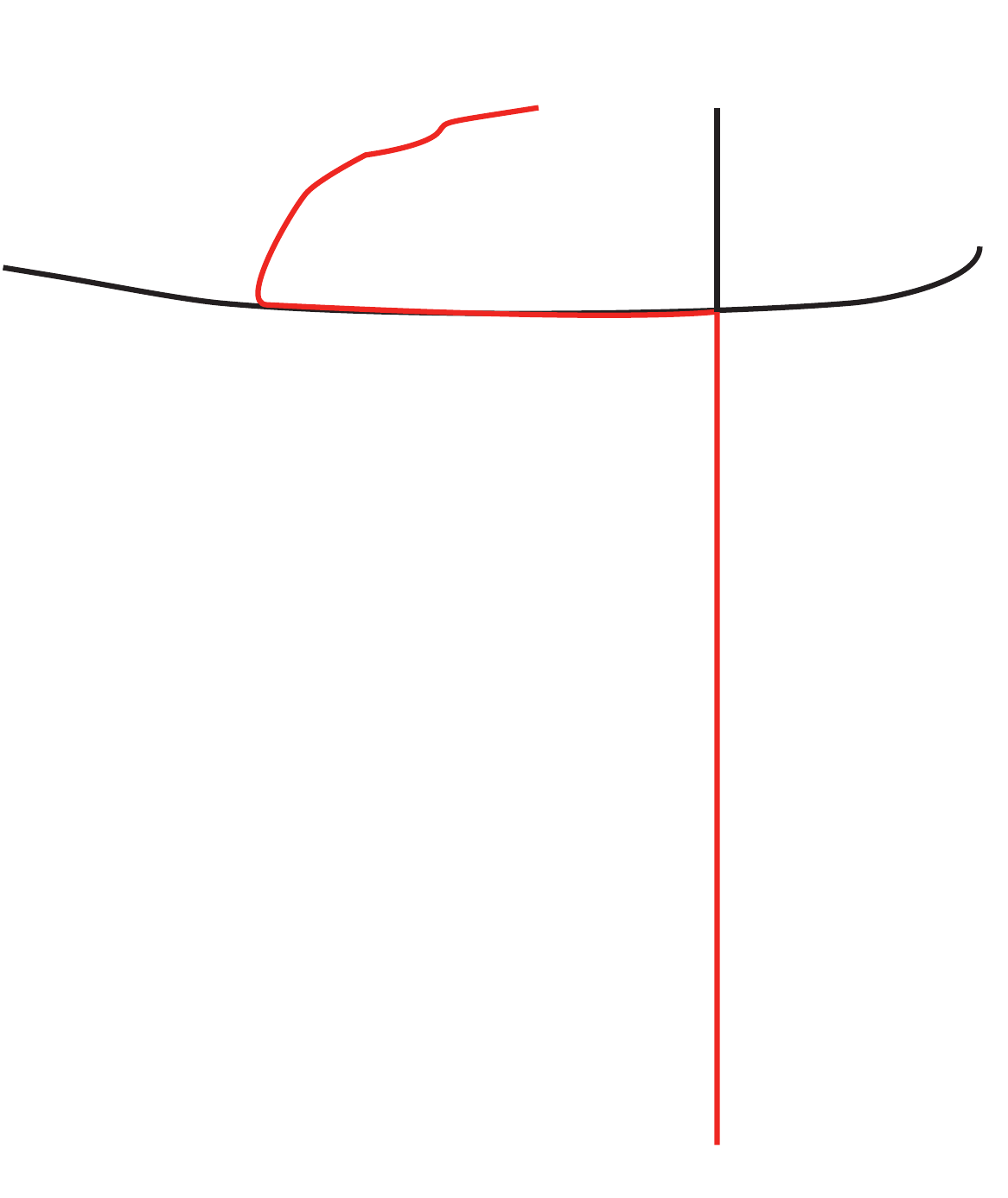}
 \put (35,85) {$(\widetilde C_1)$}  \put (60,85){$A\times \{y\}\times E_k^-$}
  \put (75,68) {$B_k^{-3K}$}  \put (35,67) {$D'\subset \partial D$}   
\end{overpic}
\caption{The cycle $\widetilde C$}
\label{fig-7.2}
\end{figure}

Now, since
\begin{gather*} \sup \{ F_{k,y}(q,p) \mid (q,p)\in D'\cup ((\A\times  \{y\}\times E_k^-)\cap B_k^{-3K})\} \leq \\ \sup \{F_{k,y}(q,p)\mid  B_k(q,p) \leq -3K\} \leq -2K\end{gather*} 
we may  conclude that 
\begin{gather*} \sup\{F_{k,y}(q,p)\mid (q,p)\in  \widetilde C\} \leq \\ \max \{\sup\{F_{k,y}(q,p)\mid (q,p)\in  \widetilde C_1\}, \sup\{F_{k,y}(q,p)\mid (q,p)\in D'\cup (\A\times  \{y\}\times E_k^-)\cap B_k^{-3K}\}\}\leq \\ \max \{c, -2K\}=c \end{gather*} 
Finally $\widetilde C$ satisfies both (\ref{U}), (\ref{V}).
\end{proof} 

Now that we have a cycle in ``standard position'' we are going to construct a representative of $[C]\cdot v$. Remember that $v$ is represented by $s\mapsto s\cdot \nu$ for $\nu \in \mathbb Z^n$. 
We set $$\tau_j^s(q_1,...q_k,p_1,...p_k)=(q_1,...,q_{j-1},q_j+s\nu,q_{j+1},...,q_k,p_1,...,p_k)$$
and $\tau^{(s_1,..,s_k)}=\tau_k^{s_k}\circ...\circ \tau_1^{s-1}$ and $\tau^s=\tau^{(s,s,..,s)}$. 

Then $C_1'= \bigcup_{s\in [0,1]}\tau^s(\widetilde C)$ is a cycle representing $u\cdot v \times \{y\}\times E_k^-$. More generally for any continuous  path $\sigma: [0,1] \longrightarrow [0,1]^k$, such that $\sigma(0)=(0,..,0)$ and $\sigma(1)=(1,..,1)$ we  have, since all such paths are homotopic, that $C'= \bigcup_{s\in [0,1]}\tau^{\sigma(s)}(\widetilde C)$ is homologous to $C'_1$. 

Moreover  define the paths $\sigma_k (s)=(1/k,...,1/k,s-j/k,0,0,...,0)$ for $s\in [j/k, {(j+1)}/k]$  where there are $j$ components equal to $1/k$. Thus $\sigma_k$ joins $(0,...,0)$ to $(1/k,...,1/k)$. Now set $$\sigma(s)=(l/k,...,l/k)+\sigma_k(k\cdot s-l)$$ for $s\in [l/k, {(l+1)}/k]$ for $0\leq l \leq k-1$. Then $\sigma$ connects $(0,...,0)$ to $(1,...,1)$. 
\begin{figure}[H]
\center\begin{overpic}[width=6cm]{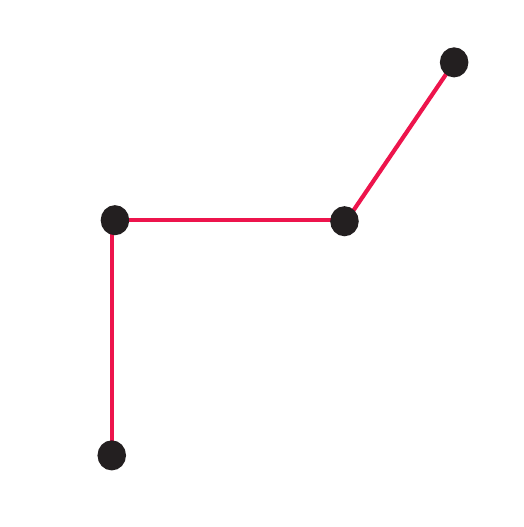}
 \put (15,5) {$(0,0,0)$}  \put (69,52){$(1/3,1/3,0)$}
  \put (75,92) {$(1/3,1/3,1/3)$}  \put (15,65) {$(1/3,0,0)$}   
\end{overpic}
\caption{The path $\sigma_3: [0,1] \longrightarrow [0,1]^3$}
\label{fig-7.3}
\end{figure}

Now we claim  $F_{k,y}(\tau^{1/k}(q,p))=F_{k,y}(q,p)$. This follows clearly from the fact that $q_j \mapsto S(q_j,p_j)$ is $ {\mathbb Z}^n$-periodic and the formula

$$F_{k,y}(q,p)=\frac{1}{k} \sum_{j=1}^k S(k\cdot q_j,p_j) +B_k(q,p)$$  where $$B_k(q,p)=\sum_{j=1}^{k-1}\langle p_{j}, q_{j}-q_{j+1}\rangle + \langle p_{k},q_{k}-q_{1}\rangle  \dispdot $$

Since

\begin{enumerate} 
\item $\sigma$ sends $[0,1]$ in $[0,1]^k$ with $\sigma(0)=(0,...,0)$ and $\sigma(1)=(1,..,1)$. 
\item $\sigma (l/k)=(l/k,...,l/k)$
\end{enumerate} 
and $\tau^{l/k}(\widetilde C)$ still satisfies properties (\ref{U}) and (\ref{V}) from Lemma \ref{Lem-7.5},
it is enough to prove, using  $F_{k,y} \circ \tau^{1/k}=F_{k,y}$, that for any such  $\widetilde C$, we have $\bigcup_{s\in [0,1/k]} \tau^{\sigma (s)}(\widetilde C)=\bigcup_{s\in [0,1]} \tau^{\sigma_k (s)}(\widetilde C) \subset F_{k,y}^{c'}$ with $c'\leq c+ O(1/k)$. 

\begin{lemma}\label{Lem-7.6}
We have 
\begin{enumerate} 
\item \label{aa}$F_{k,y}(\tau_j^s(q,p))= F_{k,y}(q,p)+ \frac{1}{k}\left ( S(kq_j+ks\nu,p_j)-S(kq_j,p_j)\right ) + s \langle p_j-p_{j-1}, \nu \rangle$
\item \label{bb} $F_{k,y}( \tau_{j-1}^{1/k}\circ...\circ \tau_1^{1/k}(q,p))=F_{k,y}(q,p)+ \frac{1}{k} \langle p_{j-1}-p_1,\nu\rangle$
\item \label{cc} $F_{k,y}(\tau_j^{s}\circ \tau_{j-1}^{1/k}\circ...\circ \tau_1^{1/k}(q,p))\leq F_{k,y}(q,p)+ \frac{K}{k} + \frac{ \vert \nu \vert }{k} \left ( \vert p_j-p_{j-1} \vert +\vert  p_{j-1}-p_1 \vert \right )$
\end{enumerate} 
\end{lemma} 

\begin{proof} 
Assertion \ref{aa} is checked immediately. Then \ref{bb} follows at once from the fact that  $S(q_j+\nu,p_j)=S(q_j,p_j)$. Finally \ref{cc} follows immediately from \ref{bb} and \ref{aa}. 

\end{proof} 

Now for $(q,p) \in \widetilde C$ satisfying properties (\ref{U}) and  (\ref{V}) of Lemma \ref{Lem-7.5}, since $\tau_k(s)$ is of the form $\tau_j^t\circ\tau_{j-1}^{1/k}\circ ...\circ\tau_1^{1/k}$ for $0\leq t \leq 1/k$, we have 

\begin{enumerate} 

\item If $(q,p) \in \left\{ \max_j \vert p_j \vert  \leq M \right\}$ we have, using \ref{cc} of Lemma \ref{Lem-7.6}
 $$F_{k,y}(\tau^{\sigma_k(s)}(q,p))\leq F_{k,y}(q,p)+ \frac{K+4M \vert \nu \vert }{k}\leq c+ O(1/k)$$
\item If $(q,p) \in F_{k,y}^{-4K}$ and $(q,p) \in T^n\times \{y\}\times E_k^-$ we have $p_j=-(q_j-q_{j-1})$ hence $F_{k,y}(q,p)=-\sum_j p_j^2+\frac{1}{k}\sum_j S(kq_j,p_j) $ thus using \ref{cc} of Lemma \ref{Lem-7.6} we have 
\begin{gather*} F_{k,y}(\tau_j^{\sigma_k(s)}(q,p))\leq -\sum_j p_j^2+\frac{1}{k}\sum_j S(kq_j,p_j) + \frac{\vert \nu \vert }{k} ( \vert  p_j-p_{j-1} \vert + \vert p_{j-1}-p_1 \vert ) \leq \\ -\sum_j p_j^2+ \frac{\vert \nu \vert }{k} ( \vert p_j \vert +2 \vert p_{j-1} \vert + \vert p_1 \vert ) + K\end{gather*} 

In the path $\sigma_k(s)$  only the $k$-th variable varies from $0$ to $1/k$.   
But since $-3K\geq F_{k,y}(q,p)\geq B_k(q,p)-K $ we get $B_k(q,p)\leq -2K $ so $\sum_j p_j^2 \geq 2K$ and using standard inequalities
$$\sum_j p_j^2- \frac{\vert \nu \vert }{k} ( \vert p_j \vert +2 \vert p_{j-1} \vert + \vert p_1 \vert )-K\geq M- \frac{\sqrt {6M}\vert \nu \vert  }{k}- K\geq K$$ for $M$ large enough. 
And this implies $$F_{k,y}(\tau^{\sigma_k(s)}(q,p))\leq -K+ O(1/k)\leq c + O(1/k) \dispdot $$
\end{enumerate} 

We thus proved that for any $(q,p) \in \widetilde C$ and $s \in [0,1]$, we have 
$$F_{k,y}(\tau^{\sigma_k(s)}(q,p))\leq c'=c +O(1/k) \dispdot $$
Since the map from $S^1 \times \widetilde C \longrightarrow F_{k,y}^{c'}$ given by $(s,q,p) \mapsto \tau^{\sigma_k(s)}$ represents $[\widetilde C]\cdot v$, this concludes our proof and the proof of proposition \ref{Prop-5.16}.
\end{proof} 
\end{proof}

\begin{remarks}
 \begin{enumerate} 
\item The above method of proof, in the case of variational problems for closed geodesics, is related to the ``passing the obstacles one at the time'' that can be found in the paper by \cite{Bangert} page 87, as well as to Gromov's book \cite{Gromov} sections 2.26 and 2.27. I wish to thank V. Bangert for the reference. 
\item Since $\varphi_k=\rho_k^{-1}\varphi^k\rho_k$, and we proved  $\gamma (\varphi_k(L_y)-L_y)$ converges to zero, so for $y=0$, we get $\frac{1}{k} \gamma (\phi^k(0_{T^n}))$ converges to zero. This is much weaker than Conjecture \ref{conjecture}.
\end{enumerate} 
\end{remarks}

\section{Proof of Theorem \ref{Main-theorem}}
 We are now going to prove Theorem \ref{Main-theorem} in the autonomous case. The general case could be proved along the same lines, but we shall show in Subsection \ref{naut}, how to deduce it formally from the time-independent case.

Thus far we showed that some subsequence of $(\phi_{k}^{t})_{k\geq 1}$, $\gamma$-converges to $\overline \phi_\infty^{t}$ and in Proposition \ref{Prop-5.12} that this implies the convergence of the sequence itself to $\overline \phi_\infty^{t}$.  Remember that according to Corollary \ref{Corollary-5.17}  the $\gamma$-convergence of $\varphi_k^t$ to $\varphi_\infty^t$ is uniform in $t$ on compact sets. In other words the sequence of functions $t \mapsto \gamma(\varphi_k^t\overline\varphi^{-t})$ converges uniformly on compact sets to $0$ and thus the sequence $H(kq,p)$ converges to $\overline{H}=\mathcal A (H)$ for the $\gamma$-metric (remember that $\overline H$ is continuous, so belongs to $\gclHamc{T^*T^n}$, the $\gamma$-completion of $C_c^\infty([0,1]\times T^*T^n)$). Clearly $\overline H (q,p)=h_{\infty }(p)$ defined in the previous subsection ( Proposition \ref{Prop-5.14}),  satisfies the first statement  of the Main Theorem.

\begin{proof} [End of the proof of theorem \ref{Main-theorem} (for time independent Hamiltonians)]
Assertion \ref{Main-theorem-2} follows from the fact that $\overline\varphi_\infty^{1}$ determines $\overline H$ (see Appendix \ref{Appendix-A}, Corollary \ref{Cor-app-A}), and that $$\overline\varphi_\infty^{1}=  \lim_{k\to \infty} \rho_{k}^{-1}\phi^{k}\rho_{k}$$ which only depends on $\varphi=\varphi ^{1}$.

 We finally prove assertion \ref{Main-theorem-3}.  Given two compactly supported  Hamiltonians $H_1, H_2$, with $\varphi_1,\varphi_2$ the time-one maps of their  flows setting  $h_{k,i}(p)=c(\mu_{x}\otimes 1 (y), \rho_{k}^{-1}\phi_{i}^{k}\rho_{k})$
\begin{gather*}  \vert { h}_{k,1}(y) - { h}_{k,2}(y) \vert \leq \vert  c(\mu_{x}\otimes 1 (y), \rho_{k}^{-1}\phi_{1}^{k}\rho_{k})-c(\mu_{x}\otimes 1(y), \rho_{k}^{-1}\phi_{2}^{k}\rho_{k}) \vert \leq \\ \gamma \left ( (\rho_{k}^{-1}\phi_{1}^{k}\rho_{k})^{-1} \circ \rho_{k}^{-1}\phi_{2}^{k}\rho_{k} \right ) \leq \gamma (\rho_{k}^{-1}\phi_{1}^{-k}\phi_{2}^{k}\rho_{k}) \leq \frac{1}{k} \gamma (\phi_{1}^{-k}\phi_{2}^{k}) \leq \gamma (\phi_{1}^{-1}\phi_{2})
\end{gather*}
the last inequality follows from Equation (\ref{1of4.7}) in  Lemma \ref{Lem-5.13}. 
Therefore the map  $\mathcal A : \Ham(T^*T^n) \longrightarrow C^0( {\mathbb R}^n, {\mathbb R} )$ is $1$-Lipschitz for the norms $\gamma$ and $C^0$ respectively. It  thus extends to a Lipschitz map from $\widehat \Ham(T^*T^n)$ to $C^{0}( {\mathbb R}^{n}, {\mathbb R} )$.

Since if $H$ only depends on $p$, we have  $\overline H=H$, we get that $\mathcal A$ is a projector.
Finally, if $C$ is the supremum of $ \vert \frac{\partial H }{\partial p}(q,p)\vert$ on $T^*T^n$, the functions $h_k$ defined in Lemma \ref{Lemma-5.10} are $C$-Lipschitz, hence their uniform limit is also $C$-Lipschitz. 
This settles the last claim of the theorem and  concludes our proof of theorem \ref{Main-theorem} for the time-independent case.
\end{proof}
\section{Proof of theorem \ref{Main-properties}}
We assume again $H$ to be  time-independent. 
In order to prove \ref{Main-properties-1} of theorem \ref{Main-properties}, we need to prove that if $H_1 \leq H_2$ then ${ h}_{\infty,1} \leq { h}_{\infty,2}$. This would follow immediately from the fact that we may choose  $S_{1}, S_{2}$ such that\footnote{This is automatically the case if $S_{j}$ is a finite dimensional reduction of the action functional, as described in \cite{Chaperon2} or \cite{Laudenbach-Sikorav}.} $S_{1}(q,p) \leq S_{2}(q,p)$ hence, ${F}_{k,1}\leq {F}_{k,2}$ and therefore  $${ h}_{k,1}(y)=c(\mu\otimes 1(y), {F}_{k,1}) \leq  c(\mu\otimes 1(y), {F}_{k,2})={ h}_{k,2}(y) \dispdot  $$
 As a result, ${ h}_{\infty,1}(y)\leq { h}_{\infty,2}(y)$. 
 
 However there is the following more general and simpler proof, assuming  only\footnote{Remember that $\varphi_1\preceq \varphi_2$ means $c_-(\varphi_2\varphi_1^{-1})=0$, see Remark \ref{rem-4.3}.}  that $H_{1} \preceq H_{2}$ so that $\phi^1_{1}\preceq \phi_{2}^{1}$ and
$$\rho_{k}^{-1}\varphi_{1}^{1}\rho_{k}\preceq \rho_{k}^{-1}\varphi_{2}^{1}\rho_{k}$$ then by going to the limit, $\overline\varphi_{1}\preceq \overline\varphi_{2}$. Now $\overline\varphi_{1}$ and $\overline\varphi_{2}$ are the flows of $\overline H_{1}$ and $\overline H_{2}$ which depend only on $p$.
Therefore, according to Appendix \ref{Appendix-A}, Corollary \ref{Cor-app-A}, they commute, and our assertion follows from the

 \begin{lemma}
 If $\phi^1$, the time-one  flow of the compactly supported integrable Hamiltonian $H(p)$, satisfies $\Id \preceq \phi^{1}$ then $H$ is non-negative.

 \end{lemma}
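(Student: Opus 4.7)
The plan is to apply property (6) of Theorem \ref{3.2}. Since $H$ depends only on $p$, the sequence $H_k(t,q,p) = H(kt,kq,p) = H(p)$ is constant, so trivially $\gamma$-converges to $H$, and hence $\mathcal{A}(H) = H$ in this case. Property (6) then gives
\[
\inf_{p \in \mathbb{R}^n} H(p) = \lim_{k \to \infty} \frac{1}{k}\, c_{-}(\phi^k),
\]
so it suffices to show that $c_{-}(\phi^k) \geq 0$ for every $k \geq 1$.

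The next step is to propagate the hypothesis $\Id \preceq \phi^1$ to $\Id \preceq \phi^k$ for all $k$. This is done by induction: since $H$ is autonomous, the maps $\phi^j$ and $\phi^\ell$ all commute. Assuming $\Id \preceq \phi^{k-1}$ and using that $\preceq$ is invariant under right-composition by a fixed element (one of the defining properties of the order on $\mathcal{HD}_c$), one obtains $\phi^{k-1} \preceq \phi^1 \circ \phi^{k-1} = \phi^k$; transitivity then gives $\Id \preceq \phi^k$.

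Combining this with the fact that $c_{-}$ is monotone with respect to $\preceq$ (which follows from the compatibility of $\preceq$ with the pointwise inequality on generating functions, as recalled in the remark following Theorem \ref{3.2}), and $c_{-}(\Id) = 0$, we conclude $c_{-}(\phi^k) \geq 0$ for every $k$. Passing to the limit yields $\inf_p H(p) \geq 0$, i.e.\ $H \geq 0$ everywhere on $\mathbb{R}^n$, which is the desired positivity (in the French sense of ``non-negative'').

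The only subtle point is the propagation of $\preceq$ under iteration. This is not a deep fact when the flow is autonomous, because the commutation of the iterates $\phi^k$ reduces it to standard invariance of the partial order $\preceq$ under composition with a commuting element; no further analytic input is needed.
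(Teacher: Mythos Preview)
Your argument is correct but considerably more roundabout than the paper's. The paper simply invokes Appendix~A, which shows by an elementary explicit computation that for the flow of $H(p)$ the generating function of $\Gamma(\phi^{t})$ is $f_t(x,y)=t\,H(y)$ with no fibre variables at all, so that $c_{-}(\phi^{1})=\inf_{p}H(p)$ \emph{exactly}. Then $\Id\preceq\phi^{1}$ gives $c_{-}(\phi^{1})\geq 0$ directly, and one is done in one line.

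You instead pass through property~(6) of Theorem~\ref{3.2}, i.e.\ the limit $\lim_{k}\frac{1}{k}c_{-}(\phi^{k})=\inf_{p}\overline H(p)$, plus an induction to propagate $\Id\preceq\phi^{1}$ to $\Id\preceq\phi^{k}$. This is logically sound, but note that property~(6) itself ultimately rests on Theorem~\ref{3.1} together with the very same Appendix~A computation; you are therefore going through the whole homogenization machinery to recover at $k\to\infty$ an identity that Appendix~A hands you at $k=1$.

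There is also a structural point worth flagging: this lemma sits \emph{inside} the proof of Theorem~\ref{3.2}, so quoting another clause of that theorem invites a circularity check. In fact property~(6) does not depend on this lemma (it follows from Theorem~\ref{3.1} and Appendix~A independently), so your argument is not actually circular --- but the paper's direct appeal to Appendix~A sidesteps the issue entirely and is the cleaner choice here.
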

 \begin{proof}
Remember that $\Id \preceq \varphi$ means $c_-(\varphi)=0$. In  Appendix \ref{Appendix-A}, Proposition \ref{Prop-A.1}, we prove that $c_{-}(\phi^1)= \inf_{p\in {\mathbb R} ^{n}}H(p)$. Therefore if $c_{-}(\phi^{1})$ vanishes, $H$ must be non-negative.
 \end{proof}

To prove \ref{Main-properties-2},  we have to compare ${\mathcal A}(H\circ \psi)$ to ${\mathcal A}(H)$. Note that the flow associated to $H\circ \psi$ is $\psi^{-1}\circ \phi^{t}\circ \psi$. Thus ${\mathcal A}(H\circ \psi)$ is associated to the $\gamma$-limit of
$$\rho_{k}^{-1}\psi^{-1}\phi^k\psi\rho_{k}=(\rho_{k}^{-1}\psi^{-1}\rho_{k}) (\rho_{k}^{-1}\phi^{k}\rho_{k}) ( \rho_{k}^{-1} \psi \rho_{k}) \dispdot $$

 But $\lim_{k\to \infty}\gamma ( \rho_{k}^{-1}\psi^{-1}\rho_{k} )=0$, that is $\rho_k^{-1}\circ \psi^{-1}\circ \rho_k$ $\gamma$-converges to 
 $\Id$.  Hence

 $$\lim_{k\to\infty} \rho_{k}^{-1}\psi^{-1}\phi^{k} \psi \rho_{k}=\lim_{k\to \infty } \rho_{k}^{-1}\phi^{k}\rho_{k}^{-1}\dispdot $$

 For property \ref{Main-properties-3}, we start with the case $c>0$. First if $c$ is a positive integer, $\lim_{k\to \infty} \rho_{k}^{-1}\phi^{ck}\rho_{k}=\lim_{k\to \infty} (\rho_{k}^{-1}\phi^{k}\rho_{k})^c$ and it thus $\gamma$-converges to $\overline \varphi)^c$, that is the flow of $c\overline H$. If $c$ is positive and of the form $1/q$
 $$\lim_{k\to \infty} \rho_{k}^{-1}\phi^{k/q}\rho_{k}= \lim_{\ell\to \infty} \rho_{q\ell}^{-1}\phi^{\ell}\rho_{q\ell}= \rho_q^{-1} \left (\lim_{\ell\to \infty} \rho_{\ell}^{-1}\phi^{\ell}\rho_{\ell}\right ) \rho_q = \rho_q^{-1}\overline \varphi\rho_q$$
 and this is the flow of $ \frac{1}{q}\overline H$. Finally,  
 we have to compare $\lim_{k\to \infty} \rho_{k}^{-1}\phi^k\rho_{k}$ and \newline
 $\lim_{k\to \infty} \rho_{k}^{-1}\phi^{-k}\rho_{k}$. Clearly, since the limits exist, they must be inverses of each other, that is they are given by $\overline\varphi_\infty$ and $(\overline\varphi_\infty)^{-1}$. Now it follows from \cite{Humiliere} or (since we are in the situation of Hamiltonians depending on $p$) from Appendix \ref{Appendix-A} corollary \ref{Cor-app-A} that two integrable autonomous continuous compactly supported Hamiltonians $H,K$  in $\widehat\Ham_{c}$, such that their flows satisfy $\varphi\psi=Id$ in $\widehat {\DHam_c}(T^{*}T^{n})$, must satisfy  $H=-K$.

 We now prove property \ref{Main-properties-4}. We limit ourselves to the case where $U$ is closed and bounded.  Let us consider a decreasing sequence of non-negative smooth functions $(H_{\nu})_{\nu \geq 1}$ such that  $\bigcap_\nu \supp (H_\nu)= U$, and  $\lim_{\nu}H_{\nu}=\chi_{U}$,  where $\chi_{U}$ is the characteristic function of $U$, the limit being here a pointwise limit. Then $\overline H_{\nu}$ is also a decreasing sequence of non-negative continuous functions, and therefore has a limit $\overline H_{\infty}$, and we denote by ${\mathcal A} (U)$ the support of $\overline H_{\infty}$. Since for any 
 other  sequence $(K_{\nu})_{\nu \geq 1}$ decreasing to $\chi_{U}$, we may find, for each $\nu$, a $\mu$ such that $K_{\mu }\leq H_{\nu}$, we have $\overline K_{\infty} \leq \overline H_{\infty}$. By symmetry, we get  $\overline K_{\infty} = \overline H_{\infty}$ hence  the support of ${\overline K}_{\infty}$ coincides with the support of $\overline H_{\infty}$.  This support defines ${\mathcal A}(U)$.
For $U$ bounded but not closed we may set  ${\mathcal A} (U)={\mathcal A} (\overline U)$. In the general case, we set ${\mathcal A} (U)={\mathcal A} (\overline U)= \lim_{V \;\text{bounded in}\; \overline U} \mathcal A (V)$
Assume now $L$ is a Lagrangian submanifold Hamiltonian isotopic to $L_{y_{0}}$. By the Hamiltonian invariance we just proved, ${\mathcal A}(L)={\mathcal A}(L_{y_{0}})$. Now it is easy to show that $${\mathcal A}(L_{y_{0}})=\{y_{0}\}\dispdot $$

  Since $\mathop{shape} (U)$ contains $p$ if and only if $U$ contains a Lagrangian $L$,  Hamiltonian isotopic to $L_{p}$, we get that for $p \in \mathop{shape} (U)$, we must have $ p\in \mathcal A (U)$. This concludes the proof of \ref{Main-properties-4}.

  As for property \ref{Main-properties-5}, it is an easy consequence of the above. Indeed, assume first
  $H(L)\geq h$ where $L$ is Hamiltonian isotopic to $L_{p_{0}}$.  Let  $\kappa_{p_{0}}$ be  a function on $ ({\mathbb R})^n$ equal to $1$ near $p_{0}$, very negative in a neighborhood of the $p$-projection of the support of $H$, and  compactly supported. Then if $\psi(L_{p_{0}})=L$, we have $$H\geq h\cdot (\kappa_{p_{0}}\circ \psi)$$  hence $$\overline H \geq h\cdot (\overline {\kappa_{p_{0}} \circ \psi})=h\cdot  \overline \kappa_{p_{0}}=h\cdot  \kappa_{p_{0}}\dispdot $$

  As a result, $$\overline H(p_{0})\geq h \kappa_{p_{0}}(p_{0})=h\dispdot $$
    Changing $H$ to $-H$, and using \ref{Main-properties-3} we get the second statement.

Property \ref{Main-properties-6} follows from the fact that $c_\pm(\rho_k^{-1}\varphi^k\rho_k)=\frac{1}{k} c_\pm (\varphi^k)$, the fact that $c_\pm$ are continuous for the $\gamma$-topology and Proposition \ref{Prop-A.1}. 

For \ref{Main-properties-7}, since $\mathcal A$ is only defined for compactly supported functions, this means
 that for any sequence $(H_n)_{n\geq 1}$ of compactly supported functions such that $\lim_n H_n=1$, where the limit is uniform on any compact set, we have $\lim_n \mathcal A (H_n)=1$
 
Let us define $K$ to be a smooth non-negative compact supported function equal to $1$ on $[-1,1]$ and set $K_R(p)=K( \frac{ \vert p \vert }{R})$.  

Let $(H_n)_{n\geq 1}$ be a sequence of compactly supported functions converging uniformly on compact sets to $1$, with the extra requirement that there exist sequences   $(R_{n})_{n\geq 1}, (R'_{n})_{n\geq 1}, (\varepsilon_{n})_{n\geq 1}$ such that $\lim_{n}R_{n}=\lim_{n}R'_{n}=+\infty$, $\lim_{n} \varepsilon _{n}=0$ and  $$K_{R_{n}}- \varepsilon_n K_{R'_{n}} \leq H_n \leq K_{R_n}+ \varepsilon_{n}K_{R'_{n}}\dispdot $$ Since $K_R$ only depends on $p$ we have that $K_R,K_{R'}$ commute and ${\mathcal A}(K_R)=K_R$. This implies
 that   $$ \lim_n(K_{R_n}- \varepsilon_nK_{R'_n})\leq \lim_n{\mathcal A}(H_n) \leq \lim_n(K_{R_n}- \varepsilon_nK_{R'_n})\dispdot $$
 Since $\lim_n K_{R_n}=1$ this implies that $\lim_n{\mathcal A}(H_n)=1$.
 
 Finally, to show that $\zeta: H \mapsto \int \mathcal A (H) d\mu(p)$ is a quasi-state, it is enough to deal with the case where $\mu$ is a Dirac mass at $p$. We must then prove

 \begin{enumerate}
 \item (Monotonicity) $H_{1}\leq H_{2}$ implies $\overline H_{1} \leq \overline H_{2}$. This follows from \ref{Main-properties-1} of theorem \ref{Main-properties}
 
  \item (Quasi-linearity) If $H,K$ Poisson commute, then $\overline {(H+K)}(p)=\overline H (p) + \overline K(p)$. this follows from the fact that if $H,K$ commute, with respective flows $\phi^{t}, \psi^{t}$, then $H+K$ has flow $\psi^{t}\phi^{t}$ and then
 ${\mathcal A}(H+K)$ corresponds to
 $$ \lim_{k\to \infty}\rho_{k}^{-1}\phi^{kt}\psi^{kt}\rho_{k}^{-1}= \lim_{k\to \infty}(\rho_{k}^{-1}\phi^{kt}\rho_{k})\lim_{k\to \infty}(\rho_{k}^{-1}\psi^{kt}\rho_{k})= \overline\varphi^t\circ \overline\psi^t$$ and this corresponds to ${\mathcal A}(H)(p)+ {\mathcal A}(K)(p)$ according to Corollary \ref{Cor-app-A} of Appendix \ref{Appendix-A}
 \end{enumerate}
This concludes the proof of theorem \ref{Main-properties} in the time independent case.

\section{Proof of Theorem \ref{Thm-4.5}, the partial homogenization case}

We here consider the case of the sequence defined by  $H_k(x,y,q,p)=H(kx,y,q, p)$ and prove that  it $\gamma$-converges to
${\overline H}(y,q, p)$ obtained by performing the above homogenization, on the variables $(x,y)$ and freezing the $(q,p)$ variables.

The  flow $\Psi_k^t$ of $H(kx,y,q,p)$ is given by

 \begin{gather*}\left \{ \begin{array}{l}\dot x= \frac{\partial}{\partial y}H(k\cdot x,y,q, p)\\ \\
\dot  y=- k \frac{\partial}{\partial x}H(k\cdot x,y,q, p)\\ \\
\dot q=  \frac{\partial}{\partial p}H(k\cdot x,y,q, p)\\ \\
\dot  p=- \frac{\partial}{\partial q}H(k\cdot x,y,q, p)\\ \\
\end{array}\right . \end{gather*}

Set $$x_{k}(t)=k\cdot x(\frac{t}{k}), y_{k}(t)=y(\frac{t}{k}), q_{k}(t)=q(\frac{t}{k}), p_{k}(t)=p(\frac{t}{k})\dispdot $$

We shall consider the flow $\varphi_{k}^t$ associated to the Hamiltonian equations:

 \begin{gather*}\left \{ \begin{array}{l}\dot x_{k}= \frac{\partial}{\partial y}H(x_{k},y_{k},q_{k},p_{k})\\ \\
\dot  y_{k}=- \frac{\partial}{\partial x}H(x_{k},y_{k},q_{k},p_{k})\\ \\
\dot q_{k}= \frac{1}{k} \frac{\partial}{\partial p}H(x_{k},y_{k},q_{k},p_{k})\\ \\
\dot  p_{k}=-\frac{1}{k} \frac{\partial}{\partial q}H(x_{k},y_{k},q_{k},p_{k})\\ \\
\end{array}\right . \end{gather*}

Then the flow $\Psi_k^t$ is given by  $\rho_{k}^{-1}\varphi_{k}^{kt}\rho_{k}$ where $$\rho_{k}(x ,y,q,p)=(k\cdot x,y,q,p)\dispdot $$

Let  $S_{k}(x,y,q,p, \xi)$ be a generating function for the flow above. The candidate for the homogenization is again given by $ \lim_{k \to \infty}\overline H_{k}$ where  $$\overline{H}_{k}(y,q,p) = c(\mu_{x}\otimes 1(y)\otimes 1(q,p),  S_{k})\dispdot $$

is obtained by freezing the $(q,p)$ variables and performing homogenization on the $(x,y)$ variables as in the previous section.
The precompactness of the sequence is proved as in proposition \ref{4.6}, and the same holds for the uniqueness of the limit as in Proposition \ref{Prop-5.12}.
Let us reformulate the problem by considering the symplectic form $\sigma_{k}$ on $T^*T^{m+n}$ given by $\sigma_k=dy\wedge dx +k dp\wedge dq$. For a Hamiltonian $H(x,y,q,p)$ its flow for $\sigma_k$ is defined by the equations

\begin{gather*} \left \{ \begin{array}{ll}  \dot x= \frac{\partial H}{\partial y}(x,y,q,p) ,\quad \dot y= -\frac{\partial H}{\partial x}(x,y,q,p) \\ \\ \dot q=\frac{1}{k} \frac{\partial H}{\partial P}(x,y,q,p) ,  \quad \dot p= \frac{-1}{k}\frac{\partial H}{\partial q}(x,y,q,p)\end{array} \right . \end{gather*}

Note that $\varphi_k^t$ is the flow associated to $H$ for the symplectic form $\sigma_k$. We thus have
\begin{lemma} \label{Lemma-10.1}
The flow $\Psi_k^t$ of $H_k(x,y,q,p)=H(kx,y,q,p)$ is given by $$ \Psi_k^t= \rho_k^{-1}\varphi_k^t\rho_k$$
where $\rho_k(x,y,q,p)=(kx, y ,q,p)$ and $\varphi_k^t$ is the flow of $H(x,y,q,p)$ for the symplectic form $$\sigma_k=dy\wedge dx +k dp\wedge dq \dispdot $$
\end{lemma} 
Now to the above Hamiltonian map $C^{1}$-close to the identity, we may associate the function $S(x,Y,q,P)$ on $T^*(T^{n+m})$ given by
 \begin{gather*}\left \{  \begin{array}{ll}
 X-x=\frac{\partial S}{\partial Y}(x,Y,q,P) , \quad y-Y= \frac{\partial S}{\partial x}(x,Y,q,P) \\ \\ Q-q=\frac{1}{k} \frac{\partial S}{\partial P}(x,Y,q,P) ,  \quad  p-P= \frac{1}{k}\frac{\partial S}{\partial q}(x,Y,q,P)\end{array} \right . \end{gather*}

Indeed this amounts to the identification of $T^*(T^{m+n})\times T^*(T^{m+n})$ endowed with the symplectic form $\sigma_{k} \ominus \sigma_{k}$ (i.e. $\pi_{1},\pi_{2}$ denoting the projections on the first and second $T^*T^n$ factor, $\sigma_{k} \ominus \sigma_{k}$ is defined as $(\pi_{1}^*\sigma_{k}-\pi_{2}^*\sigma_{k}$),  with $T^*(T^{n+m}\times {\mathbb R} ^{n+m})$ endowed with the standard form by

 $$ (x,y,q,p,X,Y,Q,P) \longrightarrow (x,Y,q,P, y-Y,X-x,k(p-P),k(Q-q))\dispdot $$
Note that $S$ depends on $k$, even though it is not apparent in the notation. 
Two such transformations are composed by the following formula: If $S_{1}(x_{1},Y_{1},q_{1},P_{1}), S_{2}(x_{2},Y_{2},q_{2},P_{2})$ are the generating functions for $\phi_1, \phi_2$, we will have the generating function of $\varphi_1\circ \varphi_2$ given by the next formula similar to that of Lemma \ref{Lem-5.2}

 \begin{gather*} S(x,Y,q,P; x_{2},Y_{1},q_{2},P_{1})=S_{1}(x,Y_{1},q,P_{1})+ S_{2}(x_{2},Y,q_{2},P)-
 \\ \langle x-x_{2},Y_{1}-Y \rangle  - k \langle P_{1}-P,q-q_{2} \rangle
 \end{gather*}

Indeed, the constraining equations are

  \begin{gather*}\left\{ \begin{array}{ll}
  \frac{\partial S}{\partial x_{2}}=0 \Longleftrightarrow  \frac{\partial S_{2}}{\partial x}(x_{2},Y,q_{2},P) -Y+Y_{1}=0 \\ \\
 \frac{\partial S}{\partial Y_{1}}=0 \Longleftrightarrow  \frac{\partial S_{1}}{\partial Y}(x,Y_{1},q,P_{1})+ -x+x_{2}=0 \\ \\
\frac{\partial S}{\partial q_{2}}=0 \Longleftrightarrow  \frac{\partial S_{2}}{\partial q}(x_{2},Y,q_{2},P)+k(P-P_{1})=0 \\ \\
   \frac{\partial S}{\partial P_{1}}=0 \Longleftrightarrow  \frac{\partial S_{1}}{\partial P}(x,Y_{1},q,P_{1})+k (q-q_{2})=0
\end{array} \right . \end{gather*}

and the map  $\varphi$ is given by
 \begin{gather*}
 \left (x, Y+   \frac{\partial S}{\partial x}(x,Y,q,P;x_{2},Y_{1},q_{2},P_{1}), q, P+\frac{1}{k} \frac{\partial S}{\partial q}(x,Y,q;P,x_{2},Y_{1},q_{2},P_{1})\right ) \longrightarrow \\ \left  (x+\frac{\partial S}{\partial Y}(x,Y,q,P;x_{2},Y_{1},q_{2},P_{1}), Y, q+\frac{1}{k}\frac{\partial S}{\partial P}(x,Y,q,P;x_{2},Y_{1},q_{2},P_{1}),P\right ) \end{gather*}

that is

 \begin{gather*}
\left (x, Y+  \frac{\partial S_{1}}{\partial x}(x,Y_{1},q,P_{1}), q, P+\frac{1}{k} \frac{\partial S_{1}}{\partial q}(x,Y_{1},q,P_{1})\right ) \longrightarrow \\  \left (x+\frac{\partial S_{2}}{\partial Y}(x_{2},Y,q_{2},P), Y, q+\frac{1}{k}\frac{\partial S_{2}}{\partial P}(x_{2},Y,q_{2},P \right ) \end{gather*}

Now the map $\varphi_{1}$ sends

 \begin{gather*}
\left  (x, Y_{1}+   \frac{\partial S_{1}}{\partial x}(x,Y_{1},q,P_{1}), q, P_{1}+\frac{1}{k} \frac{\partial S_{1}}{\partial q}(x,Y_{1},q,P_{1})\right ) \longrightarrow \\ \left ( x+ \frac{\partial S_{1}}{\partial Y_{}}(x,Y_{1},q,P_{1}),Y_{1}, q+ \frac{1}{k}\frac{\partial S_{1}}{\partial P_{}}(x,Y_{1},q,P_{1}), P_{1} \right )
 \end{gather*}

and the map  $\varphi_{2}$ sends

\begin{gather*}
\left (  x_{2}, Y+  \frac{\partial S_{2}}{\partial x_{}}(x_{2},Y,q_{2},P), q, P+\frac{1}{k} \frac{\partial S_{1}}{\partial q_{}}(x_{2},Y,q_{2},P)\right ) \longrightarrow \\ \left ( x_{2}+ \frac{\partial S_{2}}{\partial Y}(x_{2},Y,q_{2},P),Y, q_{2}+ \frac{1}{k}\frac{\partial S_{2}}{\partial P}(x_{2},Y,q_{2},P), P \right )
 \end{gather*}

Since \begin{gather*} \left \{ \begin{array}{ll}Y=Y_{1}+  \frac{\partial S_{2}}{\partial x}(x_{2},Y,q_{2},P)  \\ \\
 x=x_2 + \frac{\partial S_{2}}{\partial Y}(x_{2},Y,q_{2},P) \\ \\
   P=P_{1}+ \frac{1}{k} \frac{\partial S_{2}}{\partial q}(x_{2},Y,q_{2},P)   \\ \\
   q=q_{2} + \frac{1}{k} \frac{\partial S_{2}}{\partial P}(x_{2},Y,q_{2},P) \end{array} \right . \end{gather*}

we may  infer $$ \varphi= \varphi_1 \circ \varphi_2 \dispdot $$

\subsection{Resolution in the  \texorpdfstring{$(q_{2},P_{1})$}{(q2,p1)} variables}

For $j=1,2$, let the functions $$\frac{\partial S_{j}}{\partial P_{}}(x_{2},Y,q_{2},P) , \; \frac{\partial S_{j}}{\partial q_{}}(x_{2},Y,q_{2},P)$$ be  $C^1$ bounded, and assume $k$ is large. We may then solve

\begin{gather*} \left \{ \begin{array}{ll}
  \frac{\partial S_{2}}{\partial q}(x_{2},Y,q_{2},P)+k(P-P_{1})=0 \\ \\
  \frac{\partial S_{1}}{\partial P}(x,Y_{1},q,P_{1})+k (q-q_{2})=0
\end{array} \right . \end{gather*}

in $$ (q_2,P_1)= \left ( q_2(x,Y,q,P; x_2,Y_1), P_1(x,Y,q,P; x_2,Y_1) \right ) \dispdot $$

This requires the following matrix to be invertible:

$$ \begin{pmatrix}
 I- \frac{1}{k} \frac{\partial^2 S_{2}}{\partial q\partial P}(x_{2},Y,q_{2},P) & \frac{1}{k} \frac{\partial^2 S_{2}}{\partial q^2}(x_{2},Y,q_{2},P)    \\ & \\
 \frac{1}{k} \frac{\partial^2 S_{1}}{\partial P^{2}}(x_{2},Y,q_{2},P)   & I- \frac{1}{k}  \frac{\partial^{2}S_{1}}{\partial P\partial q}(x,Y_{1},q,P_{1})
 \end{pmatrix}
 $$

If we moreover assume that the norm of the inverse matrix is bounded, we get that the map is globally invertible : this is a theorem of Hadamard (see \cite{Hadamard} and \cite{DM-G-Z} for a modern exposition in English).

We thus get, under this assumption,  a new generating function

 $${\widehat S}(x,Y,q,P; x_2,Y_1)= S(x,Y,q,P; x_2,Y_1, q_2(x,Y,q,P; x_2,Y_1), P_1(x,Y,q,P; x_2,Y_1)) \dispdot $$

 Note that $$ \Vert q_2(x,Y,q,P; x_2,Y_1)-q \Vert_{C^1} =O(1/k),\quad \Vert P_1(x,Y,q,P; x_2,Y_1)-P \Vert_{C^1}  = O(1/k)$$

hence $$ \Vert  \widehat S(x,Y,q,P; x_2,Y_1) -S (x,Y,q,P; x_2,Y_1,q,P) \Vert_{C^1} = O(1/k)$$

where $$S (x,Y,q,P; x_2,Y_1,q,P)=S_{1}(x,Y_{1},q,P)+ S_{2}(x_{2},Y,q,P)-
 \\ \langle x-x_{2},Y_{1}-Y \rangle  \dispdot $$
Note that the left-hand side of all these equations depend on $k$, since $S$ itself depends on $k$.  
  \subsection{Generating functions for compositions}

  Suppose now that $S_{1}$ is a function of $(x_{1},Y_{1},q_{1},P_{1}, \xi_{1})$ and $S_{2}$ of
  $(x_{2},Y_{2},q_{2},P_{2}, \xi_{2})$ which are both \GFQI     .

Then   \begin{gather*} S(x,Y,q,P; x_{2},Y_{1},q_{2},P_{1}, \xi_{1}, \xi_{2})=S_{1}(x,Y_{1},q,P_{1},\xi_{1})+ S_{2}(x_{2},Y,q_{2},P,\xi_{2})-
 \\ \langle x-x_{2},Y_{1}-Y \rangle  - k \langle P_{1}-P,q-q_{2} \rangle
 \end{gather*}

The conditions defining $\varphi=\varphi_1\circ \varphi_2$ are then given by

 \begin{gather*} \left \{ \begin{array}{ll}
  \frac{\partial S_{2}}{\partial q_{}}(x_{2},Y,q_{2},P,\xi_{2})+k(P-P_{1})=0 \\ \\
  \frac{\partial S_{1}}{\partial P_{}}(x,Y_{1},q,P_{1}, \xi_{1})+k (q-q_{2})=0
\end{array} \right . \end{gather*}

and for  $k$ large enough we may write, as in the previous section

$$ (q_2,P_1)= \left ( q_2(x,Y,q,P; x_2,Y_1,\xi_{1},\xi_{2}), P_1(x,Y,q,P; x_2,Y_1,\xi_{1},\xi_{2}) \right ) \dispdot $$

We then  set

 \begin{gather*}\widehat S(x,Y,q,P; x_2,Y_1,\xi_{1},\xi_{2})=S_{1}(x,Y_{1},q,P_{1}(x,Y,q,P; x_2,Y_1,\xi_{1},\xi_{2}),\xi_{1})+ \\ S_{2}(x_{2},Y,q_{2}(x,Y,q,P; x_2,Y_1,\xi_{1},\xi_{2}),P,\xi_{2})-
  \langle x-x_{2},Y_{1}-Y \rangle  - k \langle P_{1}-P,q-q_{2} \rangle \end{gather*}

Again, we have, as above

\begin{gather*} \Vert \widehat S(x,Y,q,P; x_2,Y_1,\xi_{1},\xi_{2})-S(x,Y,q,P;x_{2},Y_{1},q,P,\xi_{1},\xi_{2}) \Vert _{C^1}=O(1/k)  \end{gather*}

\subsection{From \texorpdfstring{$\varphi_k^t$}{phikt} to \texorpdfstring{$\varphi_{k\ell}^t$}{phikellt}}

Let  $\varphi_{k}^{t}$ be the flow associated to  $H(x,y,q,p)$ for the symplectic form $\sigma_{k}$. According to Lemma \ref{Lemma-10.1}, the flow $\Psi_{k}^{t}$ associated to $H(kx,y,q,p)$ for $\sigma_{1}$ is given by
$$\Psi_{k}^{t}=\rho_k^{-1}\phi_k^{kt}\rho_k\dispdot $$
Let $F_{k}(x,Y,q,P,\xi)$ be a generating function associated to the time-one flow of $H(x,y,q,p)$, for the symplectic form $\sigma_{k}$, i.e. $\varphi_k^1$. We then have a generating function for $\varphi_{k\ell}$ given by
 \begin{gather*}
 \F_{k,\ell}(x,Y,q,P;\overline x, \overline Y, \overline q, \overline P, \overline \xi)=\\ \sum_{j=1}^{\ell}F_{k}( x_{j},Y_{j},q_{j},P_{j},\xi_{j}) -\sum_{j=1}^{\ell} \langle x_{j}-x_{j+1}, Y_{j}-Y_{j+1}\rangle - k \langle q_{j}-q_{j+1}, P_{j}-P_{j+1}\rangle
 \end{gather*}

Here   \begin{gather*} x_{1}=x, q_{1}=q, P_{\ell}=P, Y_{\ell}=Y, \\ \overline x= (x_{2} ,...,x_{\ell}), \overline q= (q_{2} ,...,q_{\ell}), \overline P= (P_{1} ,...,P_{\ell -1}) , \overline Y= (Y_{1},...,Y_{\ell -1}),  \overline \xi =(\xi_1,..., \xi_\ell) \end{gather*}

The condition for solving the constraints $\frac{\partial F}{\partial {\overline q}} = \frac{\partial F}{\partial {\overline P}}=0$  in  $(\overline q, \overline P)$ is that for $k$ large enough the inverse of the following matrix is bounded

$$ \begin{pmatrix}
 I- \frac{1}{k} \frac{\partial^2 \F_{k}}{\partial \overline q\partial \overline P}(\overline x,Y,\overline q,P) & \frac{1}{k} \frac{\partial^2 \F_{k}}{\partial \overline q^2}(\overline x,Y,\overline q,P)    \\ & \\
 \frac{1}{k} \frac{\partial^2 \F_{k}}{\partial \overline P^{2}}(\overline x,Y,\overline q,P) & I- \frac{1}{k}  \frac{\partial^{2}\F_{k}}{\partial \overline P\partial \overline q}(x,\overline Y,q,\overline P)
 \end{pmatrix}
 $$

This amounts to the inequality \begin{gather*} \tag{$\star$} \frac{1}{k} \left\Vert \begin{pmatrix}
 \frac{\partial^2 F_k}{\partial q\partial  P}(x,Y,q,P) &  \frac{\partial^2 F_k}{\partial q^2}(x,Y,q,P)    \\ & \\
 \frac{\partial^2 F_k}{\partial P^{2}} (x,Y,q,P) &   \frac{\partial^{2}F_k}{\partial P\partial q}(x,Y,q,P)
 \end{pmatrix}\right \Vert \leq \varepsilon \end{gather*}
  since a matrix of the type 

 $$ \begin{pmatrix} I +A&C & 0 &\ldots & \ldots & 0 \\
                                   B& I+A & C &0& \ldots & 0  \\
                                   0 & B & I+A& C&\ldots &0\\
                                   \vdots &\ddots  &  \ddots & \ddots \\
                                   0 & \ldots &0& B& I+A & C \\
                                   0 &\ldots & 0&0& B& I+A

 \end{pmatrix}$$

 is invertible with bounded inverse,  provided  $ \Vert A \Vert ,  \Vert B \Vert ,  \Vert C \Vert $ are small enough\footnote{It holds independently from the number of blocks: this follows from Gershgorin's theorem (see \cite{Gershgorin, Varga}), stating that if $R$ bounds the sum on any line of the  absolute value of the off diagonal terms, then the  eigenvalues of the matrix are at distance less than $R$ from the diagonal terms. The bound on the inverse follows immediately since in our case $\ell$ is fixed and $k$ can be taken arbitrarily large. }.
Indeed, the $ \Vert  \Vert_\infty$ norm of the matrix
$$ \begin{pmatrix} A&C & 0 &\ldots & \ldots & 0 \\
                                   B& A & C &0& \ldots & 0  \\
                                   0 & B & A& C&\ldots &0\\
                                   \vdots &\ddots  &  \ddots & \ddots \\
                                   0 & \ldots &0& B& A & C \\
                                   0 &\ldots & 0&0& B& A

 \end{pmatrix}$$
is bounded by a constant times $\max ( \Vert A \Vert, \Vert B \Vert , \Vert C \Vert )$,  with a constant independent from the number of blocks.  
 Under the above assumption \thetag{$\star$}, we have that

 $$\Vert \F_{k,\ell} (x,Y,q,P;\overline x, \overline Y, \overline q, \overline P, \overline \xi) - \widehat \F_{k,\ell} (x,Y,q,P;\overline x, \overline Y, \overline \xi) \Vert_{C^1}  \leq C\ell  \frac{1}{k}
$$

where

 \begin{gather*} \widehat \F_{k,\ell} (x,Y,q,P;\overline x, \overline Y, \overline \xi)=
\sum_{j=1}^{\ell}F_k( x_{j},Y_{j},q,P,\xi_{j}) -\sum_{j=1}^{\ell} \langle x_{j}-x_{j+1}, Y_{j}-Y_{j+1}\rangle \dispdot \end{gather*}

Now let  us for typographical convenience revert to $(x,y,q,p)$ notation instead of $(x,Y,q,P)$. Let
the generating function associated to $\Psi_k^1$  be given by
$F_k(x,y,q,p; \overline \xi)$.

We thus have according to proposition \ref{Prop-5.15}  for each fixed value of $(q,p)$, a function $h_{k}(y,q,p)$ and a cycle $\Gamma(y,q,p)$ with the proper homology class such that

$$F_{k}(y,q,p; \Gamma (y,q,p)) \leq h_{k}(q,y,p)+ \varepsilon_k$$

where $\lim_{k\to \infty }h_{k}(y,q,p)=h_{\infty}(y,q,p)$.

Moreover $\Gamma(y,q,p)$ may be allowed to depend continuously on $(y,q,p)$ provided we allow the weaker inequality

$$F_{k}(y,q,p; \Gamma_{j}(y,q,p)) \leq h_{k}(q,y,p) +a \chi_{j}^{\delta}(q,p)$$

where now $\chi_{j}^{\delta}$ is supported in $W_{j}^{\delta}$, a $\delta$-neighborhood of a grid in the $(q,p)$ variables. The procedure here is the same as in the proof of proposition \ref{Prop-5.15}. Note that we used proposition \ref{Prop-5.15}, in order to get rid of the $y$ dependence of the $\chi_{j}^{\delta}$. 

Then $$\widehat F_{k,\ell}(x,y,q,p; \overline x, \overline y, \overline \xi ) = \frac{1}{\ell} \sum_{j=1}^{\ell} F_{k}(x_{j},y_{j},q,p,\xi_{j}) - \langle y_{j}-y_{j+1}, x_{j}-x_{j+1}\rangle $$

will satisfy on $$\widehat\Gamma_{k,\ell} = \left\{ (x_{j},y_{j},q,p,\xi_{j}) \mid  (x_{j},\xi_{j}) \in \Gamma_{j}(y_{j},q,p) \right \}$$

   the inequality

  \begin{gather*} \widehat F_{k,\ell}(x,y,q,p; \overline x, \overline y, \overline \xi )  \leq \\  \frac{1}{\ell} \sum_{j=1}^{\ell}h_k(y,q,p)+\frac{a}{\ell} \sum_{j=1}^{\ell}\chi_j(q,p) - \langle x_{j}-x_{j+1}, y_{j}-y_{j+1}\rangle \dispdot \end{gather*}

  As before we choose the $W^{\delta}_{j}$ so that the intersection of $2m+1$ distinct $W^{\delta}_{j} =\supp (\chi_{j})$ is empty.

  Thus   $\widehat F_{k,\ell}$ is bounded by the generating function of $h_{k}(y,q,p)$ up to  $\frac{2ma}{\ell}$.
  We therefore get for all $\alpha$, that $$c(\mu, \Psi_{k\ell}^1\alpha) \leq c(\mu, \overline\Psi_{k}^{1}\alpha) +  \frac{2ma}{\ell} $$
hence $\lim_k c(\mu, \Psi_{k,\ell}^1\alpha) \leq c(\mu, \overline\Psi_{\infty}^{1}\alpha)$. We thus proved the analogue of Proposition \ref{Prop-5.15} in the partial homogenization case. 

Finally, we may conclude the proof of theorem \ref{Thm-4.5} as we did in the standard case for Theorem \ref{Main-theorem}, noting that the proof of Proposition \ref{Prop-7.4} extends to the situation where we have parameters $(q,p)$ without any notable modification.

\section{Proof of proposition  \ref{cor-3.5}}

We shall limit ourselves to the case where homogenization is done on all variables.

According to proposition \ref{B1} from Appendix \ref{Appendix-B}, if $u_{1},u_{2}$ are given by $c(1(q), L_{1})$ and $c(1(q),L_{2})$ respectively, we have $$ \vert c(1(q), L_{1})-c(1(q),L_{2}) \vert \leq  \gamma (L_{1},L_{2})\dispdot $$
Now if $L_{1}=\phi_{1}(\Lambda)$ and $L_{2}=\phi_{2}(\Lambda)$, we have
 $$\gamma (L_{1},L_{2})\leq \gamma (\varphi_{1}\varphi_{2}^{-1}) \dispdot $$

 In our case, $L_k=(\Id\times \phi_{k}^{t})\Delta$ , ${\overline L}=(\Id \times \overline{\phi}^t) \Delta $, and therefore we get $$ \vert u_{k}(t,q)-{\overline u}(t,q) \vert \leq \gamma (\phi_{k}^{t}{\overline \phi}^{-t})\dispdot $$

 Now using the estimate (\ref{1of4.7})  in the proof of Proposition \ref{Prop-5.12}, we see that $$\gamma (\phi_{k}^{mt}{\overline \phi}^{-mt}) \leq m \gamma (\phi_{k}^{t}{\overline \phi}^{-t})$$
 and taking the supremum over $t$ in $[0,1]$, we get, since the flow is autonomous, 
 $$\sup_{t\in [0,m]} \gamma(\phi_{k}^{t}{\overline \phi}^{-t}) \leq m \sup_{t\in [0,1]}\gamma(\phi_{k}^{t}{\overline \phi}^{-t})$$
thus implying
$$ \sup_{t\in [0,m]} \vert u_{k}-\overline u \vert \leq \gamma (L_{k},\overline L)\leq m \varepsilon_{k}
$$
where $ \varepsilon_{k}= \sup_{t\in [0,1]}\gamma (\varphi_{k}^{t}{\overline\varphi}^{-t})$. This concludes the proof of proposition   \ref{cor-3.5}.

\section{Non compact-supported Hamiltonians and the time dependent case}\label {ncs}

\subsection{The coercive case}\label{coercive-case}
Assume first that the autonomous Hamiltonian $H(q,p)$, defined on $T^*T^n$, is not compactly supported, but that $H$ is coercive, that is $$\lim_{ \vert p \vert \to \infty} H(q,p)= +\infty \dispdot $$

Then let $\chi_{A} : {\mathbb R} \to {\mathbb R} $ be a truncation function, that is a smooth function such that

\begin{enumerate}
\item $0\leq \chi_{A}\leq 1$
\item $\chi_{A}$ is supported in $[-2A,2A]$
\item $\chi_{A}=1$ on $[-A,A]$
 \end{enumerate}

 We then consider $\chi_{A}(  \vert p \vert ) H(q,p) = K(q,p)$, and denote by $\varphi^{t}$ the flow of $H$, and by $\psi^t$ the flow of $K$. Since $\varphi^{t}$ preserves $H$, we have that if  
 $a(\lambda), b(\lambda)$ are defined by $a(\lambda)= \inf \{ \vert p \vert \mid \forall q, \; H(q,p) \geq \lambda \}$ and $b(\lambda)= \sup \{ \vert p \vert \mid \forall q,\;H(q,p) \leq \lambda \}$, so that  
 $$W^{a( \lambda )}=\{(q,p) \mid \vert p \vert \leq a( \lambda ) \} \subset  \{ (q,p) \mid H(q,p) \leq  \lambda \} \subset \{ (q,p) \mid \vert p \vert \leq b( \lambda )\}=W^{b( \lambda )}$$
 then $\varphi^{t}$ sends $W^{a( \lambda )}$ into $W^{b( \lambda )}$ thus, for $A \geq b( \lambda )$,  we have $\psi^{t}=\varphi^{t}$ on $W^{a( \lambda )}$. Since $\rho_{k}$ preserves $ W^{a(\lambda)}$ and $W^{b(\lambda)}$, the flow $\varphi_{k}^t =\rho_{k}^{-1}\varphi^{kt} \rho_{k}$ sends also  $W^{a( \lambda )}$  into $W^{b( \lambda )}$ and moreover coincides with $\psi_{k}^{t}$ on $W^{a( \lambda )}$.

We want to conclude that the homogenizations $\overline\phi^t= \lim_{k} \varphi_{k}^t$ and $\overline\psi^t= \lim_{k}\psi_{k}^t$ coincide on $W^{a(\lambda)}$. This  is given by the following result based on the ideas of \cite{Humiliere}, section 4.4. First we shall say that ${\overline \psi}^{\; t}={\overline \varphi}^{\; t}$ on $U$ if and only if  $ ({\overline \psi}^{\; t})^{-1}{\overline \varphi}^{\; t}$ has support in the complement of $U$ (in the sense of \cite{Humiliere} section 4.4 or \cite{Humiliere 2} definition 2.24, page 51).

\begin{definition} [\cite{Humiliere} section 4.4 or \cite{Humiliere 2} definition 2.24, page 51] Let $H \in \gclHamc{T^{*}T^{n}}$. We define 
$\supp(H)$ to be the intersection of closed sets $F$, such that there exists a sequence $H_{n}$ of Hamiltonians supported in $F$, such that $\gamma-\lim_{n}H_{n}=H$. 
 One similarly defines $\supp (\varphi)$ for $\varphi \in \gclDHamc{T^{*}M}$ as the intersection of the closed sets, $F$, such that there exists a sequence $\varphi_{n}$ converging to $\varphi$ such that $\supp (\varphi_{n})\subset F$. 
 \end{definition} 
 
 \begin{lemma}\label{lemma-12.2} Let $\varphi_{k}^{t}, \psi_{k}^{t}$ be two sequences of  smooth Hamiltonian flows, with support contained in a fixed compact set for all $k$. 
 Let $U\subset V$ such  that for any $t$, $\varphi_{k}^{t}(U) \subset V$,   $\psi_{k}^{t}(U) \subset V$ and $\varphi_{k}^{t}= \psi_{k}^{t}$ on $V$. Then if $\gamma-\lim_{k\to \infty }\varphi_{k}^t={ \varphi}_{\infty}^{\; t}$ and
 $\gamma-\lim_{k\to \infty }\psi_{k}^t={\psi}_{\infty}^{\; t}$ and $H_{\infty}, K_{\infty}$ generate $\varphi_{\infty}^t$, $\psi_{\infty}^t$, then we may conclude that  $H_{\infty}-K_{\infty}$ is constant on $U$.
 \end{lemma}

 \begin{proof}Let $H_{k}(t,z), K_{k}(t,z)$ be the compactly supported  Hamiltonians generating $\varphi_{k}^t, \psi_{k}^t$. 
 For $x\in V$, we have $\psi_{k}^t(x)=\varphi_{k}^t(x)$,  hence  $\psi_{k}^{t}$ and $\varphi_{k}^t$ coincide. Then  $\left ((\psi_{k}^{t})^{-1}\circ \varphi_{k}^{t}\right ) _{\mid V}= \Id_{V}$, so the Hamiltonian generating this flow, $L_{k}(t,z)=(H_{k}-K_{k})(t, \psi_{k}^t(z))$ vanishes on $V$. Now by assumption $L_{k}$ $\gamma$-converges to a Hamiltonian generating  $(\psi_\infty^{t})^{-1}\circ  \varphi_\infty^{t}$, thus $\supp ((\psi_\infty^{t})^{-1}\circ \varphi_\infty^{t}) \cap U= \emptyset$. This flow  being generated by $H_\infty- K_\infty$ which is continuous, we have according to \cite{Humiliere 2} (proposition 2.25 page 52), that $ H_\infty- K_\infty$ is constant on $U$. 
 \end{proof}

 Now  if $a(\lambda)\leq A\leq B$, setting $H_{A}=H\cdot \chi_{A}$ (resp. $H_{B}=H \chi_{B}$),  we have  $\overline H_{A}=\overline H_{B}$ (up to constant that can be adjusted) on $W^{a(\lambda)}$ hence we may set

\begin{prop-def} 
Let $H$ be an autonomous coercive Hamiltonian on $T^*T^n$. Then the limit $\overline H = \lim_{A \longrightarrow +\infty}\overline H_{A}$ is well defined. %
\end{prop-def}

Thus any {\bf autonomous} coercive  Hamiltonian can be homogenized:

 \begin{proposition}\label{Prop-12.4}
 The map $\mathcal{A}$ from $\gclHamc{T^*T^n}$ to $C_c^{0}( {\mathbb R} ^n, {\mathbb R} )$ extends to a map defined on the set of autonomous coercive Hamiltonians, i.e. such that $\lim_{ \vert p \vert \longrightarrow +\infty }H(q,p)=+\infty$ with values in $C^0( {\mathbb R}^n, {\mathbb R} )$. Moreover if $H$ is convex in $p$, then so is $\overline H$. 
  \end{proposition}
\begin{proof} 
This follows from lemma \ref{lemma-12.2} applied to the sequence $(H_{N})_{N\geq 1}$. 
According to our truncation argument, the convexity statement needs only to be checked for $H$ of class $C^\infty$ and Tonelli. Then according to Section \ref{Subsection-Mather} Proposition \ref{Prop-13.4}, $\overline H$ coincides with Mather's $\alpha $ function and according to  \cite{CIPP}, theorem A and corollary 1, the $\alpha$ function is given by 
$$\overline H(p)= \inf_{u\in C^1(N, {\mathbb R})} \sup_{q\in N} H(q,p+du(q))$$

Now we have 
\begin{gather*} 
\overline H(tp_1+(1-t)p_2)= \inf_{u\in C^1(N, {\mathbb R})} \sup_{q\in N} H(q,tp_1+(1-t)p_2+du(q)) \leq \\  \inf_{u_1,u_2\in C^1(N, {\mathbb R})} \sup_{q\in N} H(q,tp_1+(1-t)p_2+tdu_1(q)+(1-t)du_2(q)) \leq \\
t \inf_{u_1\in C^1(N, {\mathbb R})} \sup_{q\in N} H(q,p_1+du_1(q))+ (1-t) \inf_{u_2\in C^1(N, {\mathbb R})} \sup_{q\in N}H(q,p_2+du_2(q)) \leq \\ t \overline H(p_1)+ (1-t) \overline H(p_2)
\end{gather*} 
The first inequality is obtained by just setting $u=(1-t)u_1+tu_2$, the second one by convexity of $H$. 
\end{proof}

  \subsection{Non-autonomous Hamiltonians}\label{naut}
Consider now a compactly supported $1$-periodic Hamiltonian $H(t,q,p)$ on $T^*T^n$ and consider the  Hamiltonian $K(t,\tau, q,p)=\tau+H(t,q,p)$. This new Hamiltonian, defined on $T^*(T^{n+1})$ is not compactly supported, but, considering the function $\chi_{A}$ as defined above, the Hamiltonian $$K_A(t,\tau,q,p)=\chi_{A}( \tau+H(t,q,p)) (\tau+H(t,q,p))$$ is {\bf compactly supported and autonomous}.
Its flow preserves the subsets $W^\lambda=\{(t,\tau,q,p) \mid - \lambda \leq  \tau + H(t,q,p) \leq \lambda \}$

Then $K_A$ can be homogenized, and  the same argument as above shows that that ${\overline K}_A = \overline K_{B}$ for $\lambda \leq A \leq B$ on $ \vert \tau \vert \leq \lambda- \Vert H\Vert$. Moreover, we now prove that  $\overline K = \lim_{A\to +\infty} \overline K_{A}$ is of the form $\tau + \overline{H}(p)$ on $ \vert \tau \vert \leq A$: in other words, ${\overline K}(\tau,p)-\tau$ is independent from $\tau$ (and denoted by ${\overline H}(p)$).

\begin{proposition} \label{Prop-12.6}
Let $H(t,q,p)$ be a Hamiltonian on $T^*T^n$, $1$-periodic in time and compactly supported in $(q,p)$. Then $ \lim_{A\to +\infty }\overline K_{A}=\overline K(\tau,p)$ is well defined and there exists ${\overline H}(p)$ such that $\overline K(\tau ,p)=  \tau +\overline H(p)$. The function $\overline K$   satisfies the properties of theorems \ref{Main-theorem} and \ref{Main-properties}. 
\end{proposition} 
                                                                                   
\begin{proof} 
Being of the form $L(t,\tau,q,p)= a \tau + L_{0}(t,q,p)$ is equivalent to the commutation of  $\varphi_{L}^{s}$ and $\psi_{}^s$, where $\psi_{}$ is the flow of $\frac {\partial}{\partial \tau}$, since $\{L, t\}= \frac{\partial L}{\partial \tau}$. Now by assumption on $\vert \tau \vert \leq A- \Vert H \Vert $, we have $\{K_A,t\}=1$, so setting $\rho_{k}(t,\tau,q,p)=(kt, \tau, kq,p)$,  we may write in this region (note that the region is preserved by $\rho_k$)

  \begin{gather*}  \varphi_{\overline K}^s \psi^s= \lim_{k \to +\infty }\rho_{k}^{-1}\varphi_{K}^{ks}\rho_{k} \psi^s =\lim_{k \to +\infty } \rho_k^{-1}\varphi_{K}^{ks}\psi^{ks}\rho_{k}=\lim_{k \to +\infty }
\rho_{k}^{-1}\psi^{ks} \varphi_{K}^{ks} \rho_{k} = \\ \lim_{k\to \infty} \psi^s \rho_{k}^{-1}\varphi_{K}^{ks}\rho_{k} =
\psi^s \varphi_{\overline K}^s
\end{gather*} 
Thus $\varphi_{\overline K}^s$ and $\psi^s$ commute, therefore $\overline K(t,\tau,q,p)=a\tau + \overline H (p)$ for some constant $a$.  The following lemma implies $a=1$.

 \begin{lemma} 
 Let $K(t,\tau,q,p)=\tau+H_{}(t,q,p)$, with flow $\varphi_{}^s$, and let $\psi^s(t,\tau,q,p)=(t,\tau+s,q,p)$. Then $\varphi^s\psi^{\sigma}\varphi^{-s}\psi^{-\sigma}$ is generated by $L_{\sigma}(s,t,\tau,q,p)=K(t,\tau,q,p)-K(\psi^{-\sigma}\varphi^{-s}(t,\tau,q,p))=\sigma$.
 \end{lemma} 
 
 \begin{proof} 
This is just the translation, using \cite{Cardin-Viterbo}, of the fact that $\{K,t\}=1$.
 \end{proof} 
 As a result, if we have a sequence $K_{n}=\tau+H_{n}(t,q,p)$ $\gamma$-converging to $K_{\infty}$, we shall have   $\gamma (\varphi_{n}^s\psi^{\sigma}\varphi_{n}^{-s}\psi^{-\sigma})=\sigma$, hence $$\gamma (\varphi_{\infty}^s\psi^{\sigma}\varphi_{\infty}^{-s}\psi^{-\sigma})=\sigma$$ and this implies $K_{\infty}(t,\tau,q,p)=\tau+H_{\infty}(t,q,p)$.  Note that even after truncation, we have $L_{\sigma}=\sigma$ over a large compact set, and $ \vert L_{\sigma} \vert \leq \sigma$, so that $\gamma (\varphi_{\infty}^s\psi^{\sigma}\varphi_{\infty}^{-s}\psi^{-\sigma})=\sigma$.
\end{proof} 

\begin{remark} 
We could have taken a direct approach to the non-autonomous problem. For this one can replace in subsection \ref{subsec:reform} the generating function $S(q,P)$ for $\varphi^{1/r}$ by a generating function $S(q,P,\xi)$ for $\varphi$. All formulas for the generating function of $\varphi_{k}^{1}$ translate immediately, as well as the proofs of section \ref{Main}.

\end{remark} 
Finally, we may consider the case of a non-autonomous, coercive Hamiltonian. Note that if $H$ is a Hamiltonian equal to a constant $c$ outside a compact set, we may still define $\overline H$ as $c+\overline {(H-c)}$.

\begin{proof} [Proof of Theorems \ref{Main-theorem} and \ref{Main-properties} for non-autonomous Hamiltonians]
The proof of theorems \ref{Main-theorem} and \ref{Main-properties} for general Hamiltonians now follows easily from the autonomous case and Proposition \ref{Prop-12.6}.
We only need to check that homogenization for $K_A$, that is  $\gamma$-convergence of $(K_A)_k$ for all $A$ implies the $\gamma$ convergence of $H_k$. 
\begin{lemma} 
Let $H_n(t,q,p)$ be a sequence of compact supported Hamiltonians (with fixed support) in $T^*T^n$. Assume  for all $A$ we have
$(K_n)_A=\chi_{A}( \tau+H_n(t,q,p)) (\tau+H_n(t,q,p))$  is $\gamma$-converging to $(K_\infty)_A$ such that 
$\lim_{A\to +\infty }\overline K_{A}=\overline K(\tau,p)=\tau+\overline H(p)$. Then $H_n$  $\gamma$-converges to $\overline H$.
 \end{lemma} 
\begin{proof} 
The flow of $K=\tau+H(t,q,p)$ is given by $$\Phi^s(t,\tau, q,p)=(t+s, \tau+H(t,q,p)-H(t+s,\varphi_t^{t+s}(q,p), \varphi_t^{t+s}(q,p))$$ where $\varphi_t^{t+s}$ is the flow of $H$.  If $\chi_A$ is a truncation function  replacing $K$ by $\chi_A(K)$ replaces $\Phi^s(t,\tau,q,p)$ by 
$\Phi_A^s(t,\tau,q,p)$. Notice that $\Phi^s(t,\tau,q,p)=\Phi_A^s(t,\tau,q,p)$ provided $ \vert \tau+H(t,q,p) \vert \leq A$. Now we write the coordinates as $(t,\tau,q,p,t',\tau',Q,P)$ and  the graph $\Gamma(\Phi^1)$ of 
$\Phi^1$ is given by
$$\{ (t,\tau,q,p, t+1,\tau + \Delta\tau(t,q,p),\varphi_t^{t+1}(q,p))\}$$ where $\Delta\tau(q,p)=H(t,q,p)-H(t+1, \varphi_t^{t+1}(q,p))$.  Taking the reduction by $t=0, t'=1$, we get 
$$\{ (q,p,\varphi_0^{1}(q,p))\}= \Gamma (\varphi_0^1) \dispdot $$

As  a result if $ A > \Vert H \Vert _{C^0}$, we have $\Phi^s(t,\tau,q,p)=\Phi_A^s(t,\tau,q,p)$ for $t=0, t'=1$, so the reduction of $\Gamma( \Phi_A^1)$ by $\{t=0,t'=1\}$ coincides with the reduction of $\Gamma (\Phi^1)$, that is $\Gamma(\varphi_0^1)$. 

Now the reduction inequality (\cite{Viterbo-STAGGF}, prop. 5.1 p. 705 ) implies continuity of the reduction for $\gamma$-topology, hence if
$K_n= \tau+H_n(t,q,p)$ is a $\gamma$-converging sequence with limit $K_\infty=\tau+H_\infty((t,q,p)$, then the flow $(\varphi_n)_t^{t+s}$ $\gamma$ converges to $(\varphi_\infty)_t^{t+s}$, in other words $H_n$ is a $\gamma$-converging sequence with limit $H_\infty$. Applying this with $H_n(t,q,p)=H(nt,nq,p)$ and $H_\infty(t,q,p)=\overline H(p)$ and using Proposition \ref{Prop-12.6} to verify that $K_\infty$ is of the form $\tau+\overline H(p)$, we get our theorem in the non-autonomous case. 
\end{proof} 
  Note that in Theorem \ref{Main-properties} only \ref{Main-properties-1} \ref{Main-properties-2} \ref{Main-properties-3} \ref{Main-properties-6} involve the non-autonomous case. 

\end{proof} 
Now we may even extend homogenization to the coercive situation. Let $f_{r}$ such that $f_{r}(x)=x$ for $ x \leq r$ and $f_{r}(x)=r$ for $x\geq r$. 
We then set

\begin{definition} 
Let $H(t,q,p)$ be a coercive non-autonomous  Hamiltonian. Then we set $$\overline H(p)= \limsup_{K\leq H, K \in C_{c}^0([0,1]\times T^*T^n, {\mathbb R} )} \overline K(p)=\lim_{r \to \infty} \overline {f_{r}(H)} \dispdot $$

\end{definition}
 
 \begin{proposition} \label{nonaut-coercive-homogenization}
 The function $\overline H$ is well defined and is lower semi-continuous. If $H$ is convex, so is $\overline H$. 
 \end{proposition} 
 \begin{proof} 
Indeed, as a converging increasing sequence of continuous functions, $\overline H$ is lower semi-continuous. Similarly, if $H$ is convex, given the convex domain $\Omega_R=\{ p \in {\mathbb R}^n)^* \mid \vert p \vert \leq R\} $ we have that the $\overline{f_r(H)}$  are convex in $\Omega_R$  for $r\geq R$ hence according to  Proposition \ref{Prop-12.4}, $\overline H$ is a limit of  functions convex in $\Omega_R$. Since $R$ is arbitrary, we get that $\overline H$ is convex. 
 \end{proof} 
  \begin{remarks}
  \begin{enumerate} 
  \item Because $H(t,q,p)\leq h(p)$ for some function $h$, we have $\overline H \leq h$, so $\overline H$ is locally bounded. We do not know  whether
  $\overline H$ should be continuous. 
  
  However if $ \frac{\partial H}{\partial p}(q,p)$ is bounded, then $\overline H$ is Lipschitz, hence continuous (but most interesting coercive Hamiltonians are superlinear so this does not hold). We do not know an example of coercive  Hamiltonian $H$ such that $\overline H$ is not continuous.  Note that $H$ would necessarily be non-autonomous, non-convex and  superlinear. 
  \item 
  Note that we may also use the distance $\widehat\gamma$ defined by $$\widehat\gamma (\varphi, id)= \sup\{ \gamma (\varphi (L),L) \mid L\in {\mathcal L} \} $$
  and we may also  define  the weak limit as  $\varphi=\lim_k \varphi_k$ if and only if for any $L$ in $\mathcal L$ we have
  $$\lim_k \gamma (\varphi_k(L), \varphi (L))=0 \dispdot $$
Note that this is different from convergence for $\widehat \gamma$, which would require that the above convergence is uniform in $L$.   \end{enumerate} 
  \end{remarks}

 We may now consider applications of the non-compact situation to homogenization of Hamilton-Jacobi equations.
 Indeed, let us consider a Hamiltonian $H(t,q,p)$ on $T^*T^n$, $\varphi^t$ its flow,  and  $f$ a $C^1$ function defined on $T^n$. Since the graph of $df$ is bounded, for any positive time $T$, we may replace $H$ by $K_{A}$ for $A$ large enough, in such a way that $\varphi^t (\Gamma_{df})$ is unchanged for $0\leq t \leq T$. Since $H$ is now compactly supported,  we get
 a function $u_f(t,x)$, and  the variational  solutions  $u_{k,f}(t,x)$ of
\begin{gather*} \tag{$HJ_{k}$}
\left\lbrace
\begin{array}{ll}
& \frac{\partial}{\partial t} u_{k} (t,q) + H (kt, kq,
\frac{\partial}{\partial
x} u_{k}(t,q)) = 0\\
& u_{k} (0,q) = f(q)
\end{array}
\right.
\end{gather*}

 converge to  the variational  solution ${\overline u}_f$ of

  \begin{gather*} \tag{$\overline{HJ}$}
\left\lbrace
\begin{array}{ll}
& \frac{\partial}{\partial t} u(t,q) + {\overline H} (
\frac{\partial}{\partial
x} u(t,q)) = 0\\
& u (0,q) = f(q)
\end{array}
\right.
 \end{gather*}

We may now extend the convergence to the case where $f$ is only $C^{0}$ :
\begin{corollary}   Assume $f \in C^0 ( T^n)$, and we have a sequence $f_\nu \in C^\infty (T^n) $ converging uniformly to $f$. Then $u_{k,f}$ converges uniformly on bounded time intervals  to 
$\overline u_{f} =\lim_\nu \overline u_{f_{\nu}}$.
\end{corollary} 
\begin{proof} Indeed,  we have the estimate:

  \begin{lemma} \label{Lem-12.4} Let $u_f, u_g$ be the variational solutions of the Hamilton-Jacobi equation \thetag{$HJ$}  with initial conditions $f$ and $g$ respectively. Then
  
  $$ \Vert {u}_f - { u}_g \Vert_{C^{0}} \leq  \Vert f-g \Vert_{C^{0}} \dispdot $$
  \end{lemma}

   \begin{proof}
   Indeed, let $\Psi$ be a Hamiltonian diffeomorphism of $T^{*}N$ such that $\Psi (\Lambda_{f})=\Lambda_{g}$, where $\Lambda_{f}=\{(x,df(x)) \mid x \in N\}$, and such that $\gamma (\Psi) \leq \vert f-g \vert _{C^{0}}$. We may take for $\Psi^{t}$ a truncation to a compact domain of the isotopy $\Psi^{t}(x,p)=(x, p+td(g-f)(x))$.

   Then the function $u_{f}$ is obtained as $c(1(x),\phi^{t}(\Lambda_{f}))$, and we have

   \begin{gather*}  \vert c(1(x),\phi^{t}(\Lambda_{f})) - c(1(x),\phi^{t}(\Lambda_{g})) \vert =
   \vert c(1(x),\phi^{t}(\Lambda_{f})) - c(1(x),\phi^{t} \Psi (\Lambda_{f})) \vert \leq \\ \vert c(1(x), \phi^{t}(\Lambda_{f})- \phi^{t} \Psi (\Lambda_{f})) \vert \leq  \vert  c(1(x), \Lambda_{f}- \Psi ( \Lambda_{f})) \vert
   \leq \gamma (\Lambda_{f}, \Psi (\Lambda_{f})) \leq \\ \gamma (\Psi)\leq  \Vert f-g \Vert _{C^{0}}
   \end{gather*}
   \end{proof}
   
   Now this implies, denoting by $u_{k,f_{\nu}}$ the sequence of variational solutions of the equation obtained by replacing $f$ by $f_{\nu}$ in  \thetag{$HJ_{k}$}

   $$\Vert u_{k,f_{\nu}}-u_{k, f_{\mu}}\Vert_{C^0}  \leq \Vert f_{\nu}- f_{\mu}\Vert _{C^0}$$ and $$\Vert  \overline u_{f_{\nu}}- \overline u_{f_{\mu}} \Vert \leq \Vert f_{\nu}- f_{\mu}\Vert _{C^0}$$ hence the sequences $(u_{k,f_{\mu}})_{\mu \geq 1}$ and $(\overline u_{f_{\mu}})_{\mu\geq 1}$ are Cauchy, hence have limits denoted $u_{k,f}$ and $\overline u_{f}$. 
   
   Given a positive $ \varepsilon $, choose $\nu$ large enough, so that $ \vert f - f_{\mu} \vert \leq \varepsilon $ for all $\mu \geq \nu$, 
   and $l$ large enough so that for $k\geq l$
  we have $ \vert u_{k,f_{\nu}}- \overline u_{f_{\nu}} \vert \leq \varepsilon$ we get  $$ \vert u_{k, f_{\mu}}- \overline u_{f_{\mu}} \vert \leq \vert u_{k, f_{\mu}}-u_{f_{\mu}} \vert +  \vert u_{k,f_{\nu}}- \overline u_{f_{\nu}} \vert +
  \vert  \overline u_{f_{\nu}}- \overline u_{f_{\mu}} \vert \leq 3 \varepsilon \dispdot $$

As a result, letting $\mu$ go to infinity, we get $ \vert u_{k,f}- \overline u_{f} \vert \leq 3 \varepsilon $ hence $\lim_{k\to \infty}u_{k,f}=\overline u_{f}$. 
\end{proof} 
 \subsection{A non-coercive example}

Assume for example that $$H(x_{1},x_{2},p_{1},p_{2}) = h(p_{1},p_{2})$$ outside a compact set. Notice that the Poisson brackets, $\{H, p_{1}\}=\{H,p_{2}\}=0$ outside a compact set, therefore $ \{H, \vert p_{1} \vert ^2+ \vert p_{2} \vert ^2\}=0$ outside a compact set. The flow $\varphi^{t}$ of $H$ will then remain inside a bounded domain $W^\lambda$ for $\lambda$ large enough. We may then use the same truncation method as above, and infer that we may homogenize $H$:

\begin{proposition}\label{Prop-12.15}
Let $H(x_{1},x_{2},p_{1},p_{2}) = h(p)$ outside a compact set. Then we have a homogenization operator $\mathcal A$ with the same properties as in the compactly supported case.
\end{proposition}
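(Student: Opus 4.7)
My plan is to follow verbatim the truncation strategy used in the coercive case, with one new ingredient: showing that the flow of $H$ stays bounded despite the absence of coercivity. Fix a compact set $K\subset T^*T^n$ outside of which $H(q,p)=h(p)$, and pick $R>0$ with $K\subset W^R=\{(q,p):|p|\leq R\}$. On the complement of $W^R$ we have $\partial H/\partial x_i=0$, so $\dot p\equiv 0$ along the flow. For any $\lambda>R$, if a trajectory with $|p(0)|\leq\lambda$ had first escape time $t_0$ from $W^\lambda$, then $(x(t_0),p(t_0))$ would lie outside $K$, forcing $\dot p\equiv 0$ in a neighbourhood of $t_0$, so $p$ would be locally constant with $|p|=\lambda$, contradicting escape. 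Thus $\varphi^t$ preserves $W^\lambda$, and the same argument applies to each rescaling $H_k(q,p)=H(kq,p)$, since the truncation set still projects to $\{|p|\leq R\}$.

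Next I would truncate. Let $\chi_A:\mathbb R\to\mathbb R$ be smooth, equal to $1$ on $[-A,A]$, supported in $[-2A,2A]$, and set $K_A(q,p)=\chi_A(|p|)H(q,p)$. Then $K_A$ is compactly supported and coincides with $H$ on $W^A$, so Theorem \ref{3.1} yields a homogenized $\overline{K_A}(p)$ with flow $\overline\psi_A^{\;t}$. For $R<\mu<\lambda\leq A<A'$, the flows of $K_A$ and $K_{A'}$ both preserve $W^\lambda$ (by Step 1 applied to each) and coincide there with the flow of $H$. The Humili\`ere-type agreement lemma invoked in the coercive case then forces $\overline\psi_A^{\;t}=\overline\psi_{A'}^{\;t}$ on $W^\mu$; since both limit flows are generated by Hamiltonians of $p$ alone, the functions $\overline{K_A}$ and $\overline{K_{A'}}$ can differ only by a constant on $\{|p|\leq\mu\}$. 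To kill this constant, pick $p^*$ with $R<|p^*|<\mu$: then $K_A$ restricted to $L_{p^*}$ is the constant $h(p^*)$, and property \ref{3.2-2b} applied in both directions gives $\overline{K_A}(p^*)=h(p^*)=\overline{K_{A'}}(p^*)$. Hence $\overline{K_A}$ and $\overline{K_{A'}}$ actually agree on $\{|p|\leq\mu\}$, and letting $A\to\infty$ produces an unambiguous $\overline H(p)$ on $(\mathbb R^n)^*$.

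The properties of $\mathcal A$ listed in Theorem \ref{3.2} --- monotonicity, $\mathcal A(-H)=-\mathcal A(H)$, Lipschitz continuity in $\gamma$, Hamiltonian symplectic invariance, and the quasi-state identity --- are each local in $p$ and already known for every $\overline{K_A}$, so they descend to $\overline H$. The step I expect to be the most delicate is the coherent normalisation of $\overline{K_A}$ across $A$: a priori the $\gamma$-limit pins down the flow only up to an additive constant in the generating Hamiltonian, so without an $A$-independent reference value one would only obtain $\overline H$ up to such a constant. The argument above supplies this value via the fact that, for each sufficiently large $A$, $K_A$ equals $h(p)$ on the band $R<|p|<A$, combined with property \ref{3.2-2b} of Theorem \ref{3.2}.
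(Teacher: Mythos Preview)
Your proposal is correct and follows the paper's approach: the paper's argument is the short paragraph immediately preceding the proposition, which observes that $\{H,p_j\}=0$ outside the compact set (equivalently your $\dot p=0$), hence $\{H,|p|^2\}=0$ there and the flow preserves $W^\lambda$ for $\lambda$ large, and then invokes the truncation method from the coercive case verbatim. Your first-escape-time argument is a spelled-out version of this flow-invariance step, and your explicit normalisation of the constant via property~\ref{3.2-2b} on the band $R<|p|<A$ fills in a detail that the paper leaves implicit even in the coercive case.
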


\begin{corollary} Assume $u_{k}$ is a variational solution of Hamilton-Jacobi equation \thetag{$HJ_{k}$} where $H$satisfies the conditions of Proposition \ref{Prop-12.15}.
Then the sequence $u_{k}$ converges uniformly to a solution $\bar u$ of \thetag{$\overline{HJ}$}.
\end{corollary}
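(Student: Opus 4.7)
The plan is to reduce to the compact-supported case already handled by Proposition \ref{cor-3.5} via the truncation technique introduced just before the statement, and then pass to general $C^0$ initial data using Lemma \ref{7.5}.

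First I would assume $f$ is $C^1$, so that the graph $\Lambda_f=\{(q,df(q))\}$ is a bounded subset of $T^*T^n$. By hypothesis, $H(q,p)=h(p)$ outside a compact set, hence the same holds for each rescaled Hamiltonian $H_k(t,q,p)=H(kt,kq,p)$ (the rescaling in $q$ does not affect the complement of a tube $W^{R}=\{|p|\le R\}$, and rescaling in $t$ preserves the bounded region). The argument preceding the corollary then shows that the flows $\varphi_k^t$ of the $H_k$, as well as the flow $\overline\varphi^{\;t}$ of $\overline H$, all preserve a common tube $W^\lambda$ for $\lambda$ large enough. In particular, choosing $\lambda$ large enough that $\Lambda_f\subset W^{a(\lambda)}$, the images $\varphi_k^t(\Lambda_f)$ and $\overline\varphi^{\;t}(\Lambda_f)$ all remain in $W^{b(\lambda)}$ for $t$ in a fixed compact interval, independently of $k$.

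Next I would pick a cut-off $\chi$ as in the previous subsection so that $H_\chi(q,p)=\chi(|p|)H(q,p)$ is compact supported and coincides with $H$ on $W^{b(\lambda)}$. The corresponding rescaled sequence $H_{\chi,k}(t,q,p)=H_\chi(kt,kq,p)$ is then compact supported and coincides with $H_k$ on $W^{b(\lambda)}$, so by the localization lemma used in the non-coercive subsection, the flows $\varphi_{\chi,k}^t$ and $\varphi_k^t$ agree on $W^{a(\lambda)}$, and in particular they carry $\Lambda_f$ to the same Lagrangian. The same applies to the homogenized flows. Since the variational solution is $u_k(t,q)=c(1(q),\varphi_k^t(\Lambda_f))$, and this depends only on $\varphi_k^t(\Lambda_f)$, we get $u_k=u_{\chi,k}$ and $\overline u=\overline u_\chi$ for every $k$ and every $t$ in our compact interval. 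Applying Proposition \ref{cor-3.5} to the compact-supported Hamiltonian $H_\chi$, we obtain uniform convergence $u_{\chi,k}\to\overline u_\chi$, hence $u_k\to\overline u$ uniformly on compact time intervals.

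Finally, to handle $f$ merely $C^0$, pick $f_\nu\in C^1$ with $|f-f_\nu|_{C^0}\to 0$, denote by $u_k^\nu$ and $\overline u^\nu$ the corresponding solutions. Lemma \ref{7.5} gives $|u_k-u_k^\nu|_{C^0}\le|f-f_\nu|_{C^0}$ and $|\overline u-\overline u^\nu|_{C^0}\le|f-f_\nu|_{C^0}$, uniformly in $k$ (the proof of \ref{7.5} goes through identically in the non-coercive setting since we argue through the graph of $df$ only). A standard three-epsilon argument, combined with the already established uniform convergence $u_k^\nu\to\overline u^\nu$ for each fixed $\nu$, gives uniform convergence $u_k\to\overline u$.

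I expect the main obstacle to be verifying rigorously that the variational solution is truly local, i.e. that $u_k=u_{\chi,k}$. The cleanest way is to invoke the localization lemma from the preceding subsection applied to the nested tubes $W^{a(\lambda)}\subset W^{b(\lambda)}\subset W^{c(\lambda)}$, which shows that replacing $H_k$ by $H_{\chi,k}$ does not alter the time-$t$ flow on the region containing $\Lambda_f$. Once that is established, everything else is a routine application of results proved earlier in the paper.
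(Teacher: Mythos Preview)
Your proposal is correct and follows essentially the same approach the paper takes (implicitly for this corollary, explicitly in the coercive subsection just above): truncate $H$ so that the flow on the region containing $\Lambda_f$ is unchanged, invoke Proposition~\ref{cor-3.5} in the compact-supported setting, and pass to $C^0$ initial data via Lemma~\ref{7.5} and approximation by $C^1$ functions. The paper does not spell out the proof of this corollary beyond the preceding proposition, but your argument is exactly what is intended.
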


\begin{remark} By an approximation method, this will work for any Hamiltonian such that  $$\lim_{ \vert p \vert \to \infty } \vert H(q,p) - h(p) \vert =0\dispdot $$
\end{remark}

\section{Homogenization in the \texorpdfstring{$p$}{p} variable and connection with Mather's \texorpdfstring{$\alpha$}{alpha} function }
\subsection{Homogenization in the \texorpdfstring{$p$}{p} variable}

This problem of homogenization in the $p$ variable corresponds to  the singular perturbation or penalization problem, studied in recent years by several authors (see for example \cite{Alv-Bar1, Alv-Bar2}).

 Let us consider a Hamiltonian $H(q,p)$ which is either compact supported or coercive. The sequence defined by $H_{k}(q,p)=H(q, k\cdot p)$. Its flow is given by $$\psi_{k}^{t}=\zeta_{k}^{-1}\phi^{kt}\zeta_{k}$$ where $\zeta_{k} (q,p)=(q,k\cdot p)$.

 Note that here $\zeta_{k}$ is a {\it bona fide} map on $T^*T^n$, so that we do not have to invoke covering arguments. Since $\zeta_{k}$ satisfies $\zeta_{k}^*\omega= k \omega$, we get, that

 $$\gamma (\zeta_{k}^{-1}\phi^{kt}\zeta_{k})=\frac{1}{k} \gamma (\phi^{kt})$$

 There is {\it a priori} no limit for the sequence $\psi_k^t=\zeta_{k}^{-1}\phi^{kt}\zeta_{k}$: indeed if $\phi^{t}$ is the flow of $H(p)$, $\zeta_{k}^{-1}\phi^{kt}\zeta_{k}$ will be the flow of $H(kp)$. However let us write $\tau_{k}(q,p)=\rho_k\circ \zeta_k^{-1}(q,p)=(k\cdot q, \frac{p}{k})$, then $\zeta_k=\rho_k\circ \tau_k^{-1}$ and  $$\psi_{k}^{t}=\zeta_{k}^{-1}\phi^{kt}\zeta_{k}= \tau_{k}^{}\rho_{k}^{-1}\phi^{kt}\rho_{k}\tau_{k}^{-1}= \tau_{k}^{}   \phi_{k}^{t} \tau_{k}^{-1} $$

 Now $$\gamma (\varphi_{k}^{t} {\overline \varphi}^{\; -t}) \leq \varepsilon _{k}t   $$
 thus\footnote{We use here that $\gamma (\tau_k\varphi \tau_k^{-1})=\gamma (\varphi)$. Note that $\tau_k$ is not Hamiltonian, as it is not even isotopic to the identity. However if we lift $\varphi$ to $\widetilde \varphi : T^*{\mathbb R}^n \longrightarrow T^* {\mathbb R}^n$, and $\tau_k$ also obviously lifts to a Hamiltonian map of $T^* {\mathbb R}^n$ given by $\widetilde\tau_t: (q,p) \mapsto (tq,\frac{p}{t})$, and $c_\pm (\tau_k\varphi\tau_k^{-1})$   belong to the set of actions of the fixed points of  $(\tau_k\varphi \tau_k^{-1})$ contained in the set of actions of  $\widetilde\tau_k \widetilde\varphi \widetilde \tau_k^{-1}$. But since the set of actions of fixed points of $\widetilde\tau_t \widetilde\varphi \widetilde \tau_t^{-1}$ is constant, the result follows by taking $t=k$. }
 
 $$ \gamma \left  ( (\tau_{k}^{} \varphi_{k}^{t} \tau_{k}^{-1})( \tau_{k}^{} {\overline\varphi}^{\;-t} \tau_{k}^{-1})\right ) = \gamma (\tau_{k}^{} ( \varphi_{k}^{\; t} {\overline\varphi}^{-t}) \tau_{k}^{-1}) = \gamma (  \varphi_{k}^{t}  {\overline\varphi}^{\;-t} )  \leq \varepsilon_{k} t \dispdot $$

 Now since $ \tau_{k}^{-1}\overline\phi^{\; t}\tau_{k}$ is generated by ${\overline H} (k\cdot p )$, we do not get a limit for $H(q,k\cdot p)$ but we get:

\begin{proposition}\label{10.1}
Let $H$ be an autonomous Hamiltonian which is either compact supported or coercive. 
 $$\lim_{k\to \infty} \gamma (H(q,k\cdot p) , {\overline H}(k\cdot p) ) =\lim_{k \to \infty} \gamma (\psi_{k}^{t}(\overline \phi)^{-kt})=0$$
\end{proposition}

In spite of the fact that $\overline H (k\cdot p)$ has no limit as $k$ goes to infinity, the above proposition has a number of applications.
 First, let us consider
 the standard parabolic Hamilton-Jacobi equations
  \begin{gather*}
\left\lbrace
\begin{array}{ll}
 \frac{\partial}{\partial t} u(t,q) + H (q,
\frac{\partial}{\partial
q} u(t,q)) = 0 \\ u(0,q)=f(q).
\end{array}
\right.
\end{gather*}

Set $v_{k}(t,q)= \frac{1}{k}u(k\cdot t,q)$. This is now a solution of 

\begin{gather*}  \left\lbrace \begin{array}{ll} \frac{\partial}{\partial t} v_{k}(t,q) + H (q,
k\frac{\partial}{\partial
q} v_{k}(t,q)) = 0 \\  v_{k}(0,q)=\frac{1}{k}f(kq). \end{array}\right .
\end{gather*}

and since $\lim_{k\to \infty}\gamma (H(q,k\cdot p), \overline H(k\cdot p))=0$ and $\lim_{k\to \infty}\frac{1}{k}f(kq)=0$ we get that 
$v_{k}$ is approximated by $w_{k}$, variational solution of 

\begin{gather*}  \left\lbrace \begin{array}{ll} \frac{\partial}{\partial t} w_{k}(t,q) + \overline H (
k\frac{\partial}{\partial
q} w_{k}(t,q)) = 0 \\  w_{k}(0,q)=0. \end{array}\right .
\end{gather*}

that is $\lim_{k\to \infty} \vert w_{k}-v_{k} \vert =0$.  Now, it is clear that $w_{k}(t,q)=-t\overline H(0)$, so that 
we get
the following result, which had been proved in \cite{L-P-V} in the $p$-convex one-dimensional case. 
\begin{proposition} \label{prop-9.2}
Let $u$ be a variational solution of \begin{gather*} \tag{$HJ$}\left\{
\begin{array}{ll}
 \frac{\partial}{\partial t} u (t,q) + H (q,
\frac{\partial}{\partial
q} u(t,q)) = 0 \\ u(0,q)=f(q) &
\end{array}
\right.
\end{gather*}

then $$\lim_{t\to \infty} \frac{1}{t} u(t,q)= -\overline H(0)$$
\end{proposition}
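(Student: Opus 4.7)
The plan is to reduce the proposition to the zero-initial-condition case already treated in the paragraph preceding the statement, via a parabolic rescaling. Specifically, set
\[
u_k(t,q) = \frac{1}{k}\, u(kt, kq).
\]
A direct chain-rule computation (done at the variational level by pushing generating functions through the conformal rescaling $\rho_k$) shows that $u_k$ is the variational solution of
\[
\partial_t u_k(t,q) + H\bigl(kq, \partial_x u_k(t,q)\bigr) = 0, \qquad u_k(0,q) = \tfrac{1}{k} f(kq).
\]
Since $f \in C^0(T^n)$ is bounded, the initial datum $f_k(q) = \frac{1}{k} f(kq)$ satisfies $\|f_k\|_{C^0} \to 0$.

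Next, I would combine Theorem \ref{3.1} with Lemma \ref{7.5}. By Theorem \ref{3.1} the sequence $H(kq,p)$ $\gamma$-converges to $\overline H(p)$, and Humili\`ere's continuity result used to establish Proposition \ref{cor-3.5} shows that the associated variational solutions converge uniformly on compact time intervals; Lemma \ref{7.5} absorbs the $C^0$-error $\|f_k\|_{C^0}$ in the initial data. Hence $u_k$ converges uniformly on compact time intervals to the variational solution $\overline u$ of
\[
\partial_t \overline u(t,q) + \overline H\bigl(\partial_x \overline u(t,q)\bigr) = 0, \qquad \overline u(0,q) = 0.
\]
Because $\overline H$ depends only on $p$ and the initial condition vanishes, uniqueness forces $\overline u(t,q) = -t\,\overline H(0)$.

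To extract the asymptotic, I would fix $q_0 \in T^n$, set $q = q_0/k$ (well defined on the torus), and observe that
\[
u_k(1, q_0/k) = \frac{1}{k}\, u(k, q_0),
\]
so $\frac{1}{k} u(k, q_0) \to -\overline H(0)$ as $k\to\infty$, uniformly in $q_0$. To pass from integer times to a continuous $t\to\infty$, note that $\partial_t u = -H(q, \partial_x u)$ is uniformly bounded (since $H$ is bounded on the effective region of the flow, by the truncation discussion of Section \ref{ncs}), so $|u(t,q) - u(\lfloor t \rfloor, q)|$ is uniformly bounded and $\frac{1}{t} u(t, q_0) \to -\overline H(0)$.

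The main obstacle is the passage in the second step: namely justifying that $\gamma$-convergence of the rescaled Hamiltonians $H(kq,p) \to \overline H(p)$, combined with $C^0$-converging initial data, implies uniform convergence of the variational solutions. This is the place where one must invoke Humili\`ere's continuity of the variational operator from the $\gamma$-completion to $C^0$, since without it the rescaling argument only controls the flows, not the value functions. Once this is in hand, everything else is bookkeeping.
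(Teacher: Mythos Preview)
Your argument is essentially the paper's own: the same parabolic rescaling $u_k(t,q)=\frac{1}{k}u(kt,kq)$, the same appeal to $\gamma$-convergence of $H(kq,p)\to\overline H$ together with Lemma \ref{7.5} and Proposition \ref{cor-3.5}, and the same identification of the limit $\overline u(t,q)=-t\,\overline H(0)$. One cosmetic point: ``$q_0/k$'' is not actually well defined on $T^n$, but since the convergence $u_k(1,\cdot)\to-\overline H(0)$ is uniform in $q$ this does not matter, and your interpolation to continuous $t$ via the bound on $\partial_t u$ is a perfectly good substitute for the paper's (rather terse) claim that the remainder $w(t,q)=u(t,q)+t\,\overline H(0)$ is bounded.
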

In the general convex case this result is due to the fact that the solutions of 
\thetag{$HJ$} are defined by the
Lax-Oleinik semi-group $T^t$ and that for $u_0$ the viscosity solution of $H(x,du_0(x))=-\overline H(0)$ we have $T^tu_0+\overline H(0)t=u_0$ and for all $u$
$ \vert T^t u - T^t u_0 \vert \leq \vert u- u_0 \vert $ so that $ \vert T^tu+\overline H(0)t \vert$ is bounded (see \cite{Fathi1, Fathi2}). 

\subsection{Connection with Mather \texorpdfstring{$\alpha$}{alpha} function}\label{Subsection-Mather}
We start with 
\begin{definition} \label{Tonelli-Lagrangian}
The $C^2$ Lagrangian $L(t,q,\xi)$ is said to be a Tonelli Lagrangian if it is strictly convex and coercive, that is $ \frac{\partial^{2}}{\partial \xi^{2}}L(t,q,\xi) > \varepsilon \Id$ for some $ \varepsilon >0$ and the Euler-Lagrange flow defined on $TN$ by $$ \frac{d}{dt} \frac{\partial }{\partial \xi }L(t,q,\xi) - \frac{\partial }{\partial q}L(t,q,\xi)=0$$ 
is complete. 
\end{definition} 
Of course if $H$ is the Legendre dual of $L$ that is 
$$H(t,q,\xi) = \inf_\xi \left \{ \langle p,\xi\rangle - L(t,q,\xi)\right \}$$
 this definition is equivalent to requiring that  the flow $\varphi^{t}$ of the corresponding Hamiltonian is complete. 

For a Tonelli Lagrangian $L(t,x,\xi)$,  $1$-periodic in $t$ and strictly convex in $p$, the $\alpha$ function has been defined by Mather in \cite{Ma}  as
$$\alpha (p)= \lim_{T\to \infty}\frac {1}{T}\inf_{q\in C^1([0,T],N)}\left\{  \int_0^T L(t,q(t), \dot q (t))dt - \langle p, x_1-x_0 \rangle \mid q(0)=x_0, q(T)=x_1 \right\}$$
Note that in the above, the infimum is for $x_0, x_1$ free to vary. 

 As a special case, we may show
 \begin{proposition}\label{Prop-13.4}
Let $H$ be the Legendre dual of the Lagrangian $L$, i.e. $H$ is strictly convex in $p$ and $$L(t,q,\xi)=\sup_{p\in T_q^*N} \left\{ \langle p, \xi \rangle -H(t,q,p)\right \} \dispdot $$ Then $${\overline H}(p)=\lim_{T \to \infty} \inf_{q\in C^1([0,T],N)} \left\{ \frac{1}{T} \int_0^T L(t,q(t), \dot q (t))dt - \langle p, x_1-x_0 \rangle \mid q(0)=x_0, q(T)=x_1 \right\} \dispdot $$
As a result $\overline H$ coincides with Mather's $\alpha$ function. 
 \end{proposition}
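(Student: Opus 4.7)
The plan is to reduce to the case $p=0$ by a fibre translation, identify the variational solution of the Hamilton--Jacobi equation with its Hopf--Lax representative when $H$ is convex, and then invoke the long-time average proposition of section~\ref{ncs}.

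\textbf{Step 1: Reduction to $p = 0$.} Fix $p \in \mathbb{R}^n$ and set $H_p(t, q, P) := H(t, q, p + P)$. The fibre translation $\tau_p(q,P) = (q, P + p)$ is a symplectomorphism that commutes with the rescaling $\rho_k(q,P) = (kq, P)$ and conjugates the flow $\phi^t$ of $H$ to the flow of $H_p$. Tracing this through the construction of subsection \ref{reform} gives $\overline{H_p}(P) = \overline H(P + p)$, and in particular $\overline{H_p}(0) = \overline H(p)$. The Legendre dual of $H_p$ in the momentum is $L_p(t,q,\xi) = L(t,q,\xi) - \langle p, \xi \rangle$.

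\textbf{Step 2: Variational solution equals Hopf--Lax infimum.} Because $H_p$ is strictly convex and superlinear in $P$, it is Tonelli, and the classical theory (uniqueness of semiconcave viscosity solutions, together with the semiconcavity of the variational solution) identifies the variational solution $u_p$ of
\begin{equation*}
\partial_t u_p + H_p(t, q, \partial_q u_p) = 0, \qquad u_p(0, q) = 0,
\end{equation*}
with the Hopf--Lax function
\begin{equation*}
u_p(T, q) \;=\; \inf_{\gamma(T) = q} \int_0^T L_p(t, \gamma, \dot\gamma)\,dt \;=\; \inf_{\gamma(T) = q,\; \gamma(0) = x_0} \left\{ \int_0^T L(t,\gamma,\dot\gamma)\,dt - \langle p, q - x_0 \rangle \right\}.
\end{equation*}

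\textbf{Step 3: Long-time average.} The proposition of section~\ref{ncs} asserts that $\lim_{T\to\infty} T^{-1} u(T, q) = -\overline H(0)$ for the variational solution of $(HJ)$ with bounded initial data. Because $H_p$ is coercive, the truncation argument of section~\ref{ncs} applies and the same conclusion holds for $u_p$: $\lim_{T\to\infty} T^{-1} u_p(T, q) = -\overline{H_p}(0) = -\overline H(p)$. Substituting the Hopf--Lax formula from Step 2 yields
\begin{equation*}
-\overline H(p) \;=\; \lim_{T \to \infty}\frac{1}{T}\inf_{\gamma(0) = x_0,\; \gamma(T) = x_1} \left\{\int_0^T L(t, \gamma, \dot\gamma)\,dt - \langle p, x_1 - x_0\rangle\right\},
\end{equation*}
which, modulo the sign convention the author adopts for the $\alpha$ function, is exactly the claim.

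\textbf{Main obstacle.} The delicate point is Step 2: for Tonelli Hamiltonians the agreement of variational and viscosity solutions is classical (and is already implicitly used in Proposition~\ref{cor-3.5} and in the reference to \cite{L-P-V}), but establishing it directly from the generating-function construction requires either semiconcavity plus the Crandall--Lions uniqueness theorem, or a direct identification of the minmax critical value of the generating function with the minimizing action along broken extremals. A secondary technical nuisance is that a Tonelli Hamiltonian is not compactly supported, so one must truncate outside a fibre-ball $\{|P| \leq A\}$ invariant under the flow as in section~\ref{ncs}; coercivity makes this harmless.
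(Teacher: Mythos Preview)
Your argument is correct and follows the same overall architecture as the paper's proof: reduce to $p=0$, identify the variational solution with the action infimum, then apply the long-time average proposition $\lim_{t\to\infty} t^{-1}u(t,q) = -\overline H(0)$. The difference lies entirely in how Step~2 is justified.

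The paper bypasses viscosity theory. It writes down the action functional $E_t(x_1;q) = \int_0^t L(q(s),\dot q(s))\,ds$ on the path space $\{q:[0,t]\to T^n \mid q(0)=x_0,\ q(t)=x_1\}$ and observes, via the standard first-variation computation, that $E_t$ is itself a \GFQI for $\varphi^t(0_{T^n})$. Because $L$ is convex, the fibrewise critical points of $E_t$ are minima, so $c(1(x),E_t)=\inf_q E_t(x;q)$ \emph{by construction}; no appeal to Crandall--Lions uniqueness or to semiconcavity is needed. This is essentially Chaperon's broken-geodesic picture \cite{Chaperon}, and it keeps the argument entirely within the generating-function framework of the paper --- which is exactly the second option you name in your ``main obstacle'' paragraph.

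Your route through the Tonelli variational-equals-viscosity theorem is valid and is the point the introduction alludes to (``these two solutions do not coincide in general, with one notable exception: when the Hamiltonian is convex in $p$''). It has the virtue of being a black box, at the cost of importing a nontrivial PDE result that the paper's own argument avoids. Your remark on truncation is handled identically in both approaches via the coercive case of section~\ref{ncs}, and your observation on the sign is apt: both proofs in fact produce $-\overline H(p)$ on the right-hand side.
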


 \begin{proof}
 Replacing $L(t,q,\xi)$ by $L(t,q,\xi)+ \langle p, \xi \rangle$, it is enough to consider the case $p=0$.
 Then let $${\mathcal P}_{t}=\{q:[0,t] \longrightarrow M \}$$ and $\pi:{\mathcal P}_{t} \longrightarrow M$ the map $q \mapsto  q(t)$. Let $$E_{t}(q)=\int_{0}^t L(s, q (s), \dot q (s)) ds$$ be defined on ${\mathcal P}_{t}$, and consider\footnote{As Frol Zapolsky pointed out, one must first do a finite dimensional reduction of $E_{t}$, for example using a broken geodesic method see \cite{Chaperon2}. This is done in Appendix \ref{Appendix-Lag}.} $E_{t}$ as a \GFQI    . We shall write $(x_{1},q)$ instead of $q$ to remind the reader that $\pi(q)=q(t)=x_{1}$.

 Now  \begin{gather*} DE_{t}(x_{1},q)= \int_{0}^t\left [\frac{\partial L}{\partial x}(s,q(s),\dot q (s))- \frac{d}{dt}\frac{\partial L}{\partial \xi}(s,q(s),\dot q (s))\right ] \delta q(s) ds +\\  \frac{\partial L}{\partial \xi}(t,q(t),\dot q (t)) \delta q(t)-
 \frac{\partial L}{\partial \xi}(0,q(0),\dot q (0)) \delta q(0) \end{gather*}

 Setting $$p(t)= \frac{\partial L}{\partial \xi}(t, q(t),\dot q (t))$$  we get
 $(x_{1}, \frac{\partial E_{t}}{\partial x_{1}})= (x_{1}, p_{1}) = \phi^{t}(x_{0},0)$. Therefore $E_{t}$ is a \GFQI     of $\phi^{t}(0_{N})$,
 and since $$u_L(t,x)=\inf\{ E_{t}(x,q)\mid   q \in {\mathcal P}, q(1)=x \}=c(1(x),E_{t})$$  is a variational solution of 
  \begin{gather*} \tag{$HJ$}\left\{
\begin{array}{ll}
 \frac{\partial}{\partial t} u (t,q) = H (t,q,
\frac{\partial}{\partial
q} u(t,q)) = 0 \\ u(0,q)=0 &
\end{array}
\right.
\end{gather*}

and  we  proved in Proposition \ref{prop-9.2} (note that here the equation has a different sign in front of $H$)

 $$\lim_{t\to \infty} \frac{1}{t}u_{L}(t,x) =\overline H(0).$$
this concludes our proof. 
 \end{proof}
As a consequence we get
\begin{corollary}[P. Bernard, \cite{Bernard2}])
The Mather $\alpha$ function is symplectically invariant. 
\end{corollary} 

\section{More examples and applications}
 \subsection{Homogenization of \texorpdfstring{$H(t,q,p)$}{H(t,q,p)} in the variable \texorpdfstring{$t$}{t}}

  Applying partial homogenization's Theorem \ref{Thm-4.5}  to the $t$ variable for a $T$-periodic Hamiltonian $H(t,q,p)$ defined on $T^*T^n$, we obtain an autonomous Hamiltonian $\overline H(q,p)$. However, this is nothing else than $$\overline H(q,p)= \frac{1}{T}\int_{0}^{T}H(t,q,p) dt$$

  Indeed, if $H(kt, q,p)$ has flow $\phi_{k}^{t}$, we have,  by the fundamental theorem of classical averaging (see \cite{B-M} page 429, \cite{S-V} theorem 3.2.10 page 39): $$\lim_{k\to \infty} \phi_{k}^{t} = \overline \phi^{t}$$ in the $C^0$ topology, where $\overline \phi^{t}$ is the flow of $$\frac{1}{T} \int_{0}^{T} X_{H}(t,q,p) dt =X_{\overline H}(q,p).$$
  Since according to \cite{Viterbo-STAGGF}(proposition 4.15, page 703), $C^0$-convergence implies $\gamma$-convergence\footnote{This is proved in $ {\mathbb R}^{2n}$ in the quoted reference, but is easily extended to the torus case. Indeed the proof is based on the lemma, page 703, which states the existence of a compactly supported Hamiltonian diffeomorphism such that each point of the unit disc bundle is displaced by at least $ \varepsilon $. This can be constructed by taking the Hamiltonian flow associated to the Hamiltonian $H(q,p)=\chi ( \vert p \vert ) \langle p,\xi(q) \rangle$, where $\chi$ is a cutoff function, $\xi(q)$ a non-vanishing vector field on $T^n$. }, our claim follows.

  \subsection{The one dimensional case}

 In \cite{L-P-V}, the computation of $\overline H$ in the case $H(q,p)= \vert p \vert ^2 - V(q)$ and for  $V$  bounded from below is explicitly dealt with. 
 Indeed,  assuming $V\geq 0$ is one-periodic and vanishes at least at one point, we have

 \begin{equation} \tag{$\star$}\begin{aligned} \left \{ \begin{array}{ll}
 \overline H (p)=0 & \;\text{if}\quad  \vert p \vert  \leq \int_{0}^{1}( V(q) )^{1/2}dq \\ \\  \overline H(p)=\lambda &\;\text{where}\; \lambda \;\text{solves}\; \vert p \vert = \int_{0}^{1}\left ({V(q)+\lambda }\right ) ^{1/2} dq  \;\text{otherwise} 
   \end{array}\right .
 \end{aligned} \end{equation} 

 Indeed, according to theorem \ref{Main-properties} property \ref{Main-properties-5}, if we can find a curve $L$ in $T^*S^1$ such that $ \vert p \vert ^{2}-V(q) \leq h$ on $L$ and $\int_{L}pdq=v$ then $\overline H (v) \leq h$. Since $H(q,p) \leq h$ contains $L_{v}=\{(q,p(q)) \mid p= (V(q)+h)^{1/2} \}$ and this is the graph of a Lagrangian submanifold with integral of the Liouville class $$v=\int_{0}^{1} (V(q)+h)^{1/2} dq$$ we get that $\overline H(v) \leq h$. But the  Lagrangian, $\{(q,p) \mid p= (V(q)+h)^{1/2}+ \varepsilon \} $  is contained in  $H(q,p) \geq h$, hence
 $\overline H(v)\geq h$. By the monotonicity property ( \ref{Main-properties-1} of theorem \ref{Main-properties}) $\overline H(v) \geq 0$, and this proves \thetag{$\star$}.

In higher dimension,  since $\{(q,p+du(q)) \mid q \in T^{n}\}$ is Lagrangian and Hamiltonian isotopic to $L_{p}$, the fact  that if $u(q)$ is a smooth function such that $ H(q, p+ du(q))\leq h$ then $\overline H(p) \leq h$ is a useful piece of information in estimating $\overline H$. 

\subsubsection{A special ``geometric'' example}

We now give an example of a  Hamiltonian that is the characteristic function of a domain in $T^{*}T^{1}$. The Homogenized Hamiltonian is well-defined according to Theorem \ref{Main-properties} \ref{Main-properties-4} and Remark \ref{rem-4.3} (\ref{rem-4.3-3}). 
\begin{figure}[H]
\begin{center}  \begin{overpic}[width=6cm]%
 {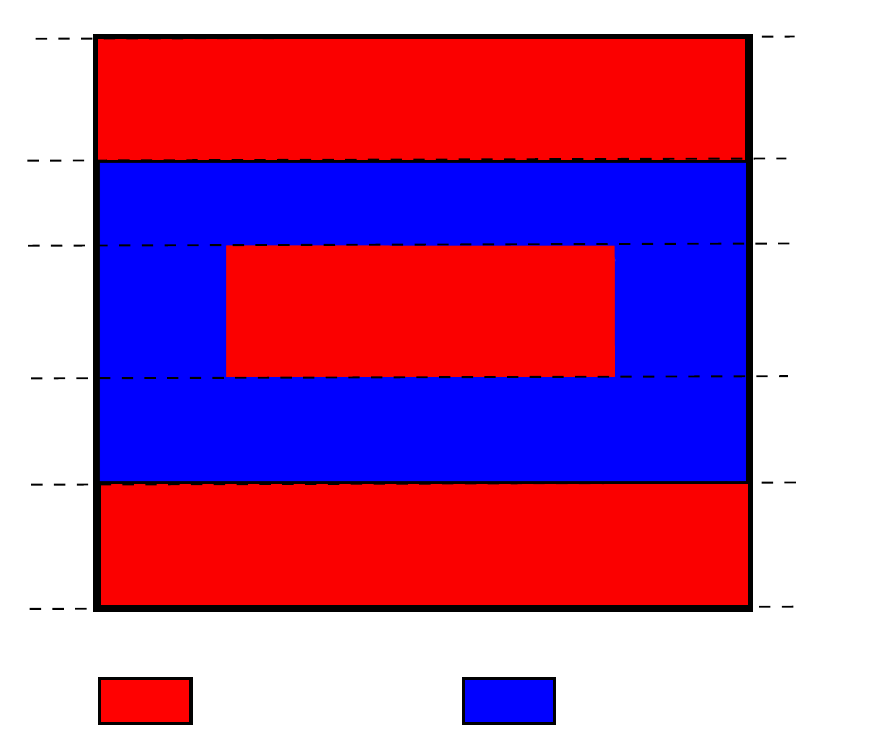}
 \put (-20,80){$p=2$}
 \put (-20,66){$p=1$}
   \put (-20,29){$p=-1$}
  \put (-20,15){$p=-2$}
  \put (25,3){$H=0$}
 \put (70,3){$H=1$}
\end{overpic}
\end{center}
\caption{The Hamiltonian $H(q,p)$.}\label{fig2}\end{figure}

\begin{figure}[H]\begin{center}  \begin{overpic}[width=6cm]%
 {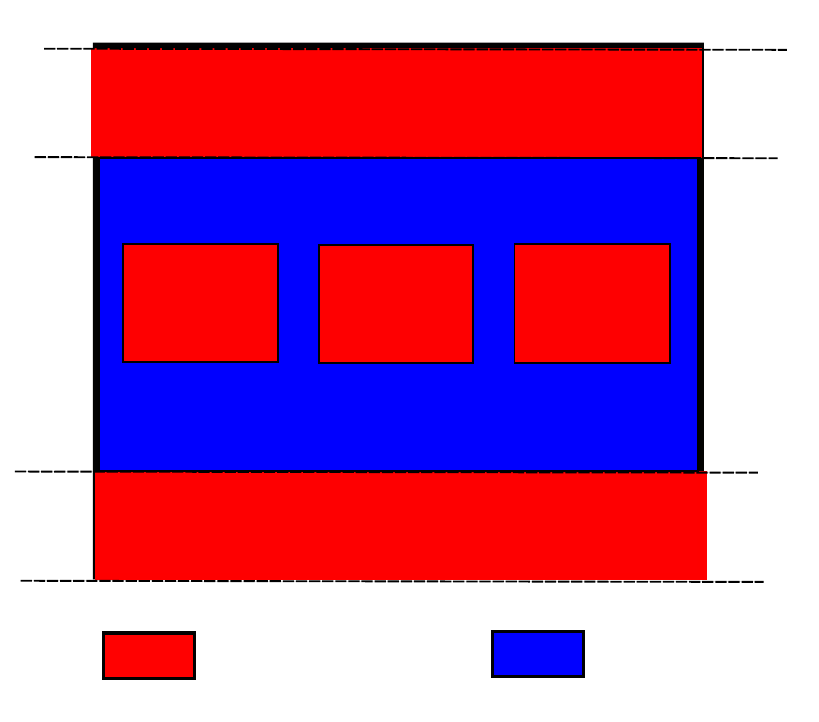}
 \put (-20,82){$p=2$}
 \put (-20,68){$p=1$}
   \put (-20,30){$p=-1$}
  \put (-20,15){$p=-2$}
  \put (30,5){$H=0$}
 \put (80,5){$H=1$}
\end{overpic}
\end{center}
\caption{The Hamiltonian $H(k\cdot q,p)$ (for $k=3$). }
\label{fig3}\end{figure}

We let $H$ be a Hamiltonian  on $T^{*}T^{1}$ represented on Figure \ref{fig2}, vanishing on the red set, assumed to be open, and equal to one in the blue set. We want to compute $\overline H (p)$  for $\vert p \vert \leq 2$. 
Denote by $A$ the area of the red island, and by $B$ the area of the blue sea, so $A+B=2$.  Note that for $A$ large compared to $B$ (in fact as soon as $A>B$) it may be difficult to construct a curve with  Liouville class $0$ contained in $H=1$ since we have to either go above the red island, thus adding an area of $A/2$ and we cannot substract more than $B/2$, or we go below and then add $-A/2$ to which we cannot add more than $B/2$. However replacing $H$ by $H_k$ the red island is replaced by $k$ smaller islands, and the difficulty vanishes as we have the choice to go above or below each island. More generally for $k$ large enough, we may find an embedded curve isotopic to the zero section, contained in the blue region of Figure \ref{fig3} (where $H(k\cdot q,p)=1)$, with  any given Liouville class  in $[-1,1]$. Thus, the curve yields an $L\in {\mathcal L}_{p}$ contained in $H_{k}=1$ for any $p$ in $[-1,1]$. As a result, since obviously $\overline H_k=\overline H$, we have  $\overline H (p) \geq 1$ for any $p$ in $[-1,1]$.
Since obviously, $\overline H(p)=0$ for $ \vert p \vert >1 $, we get
\begin{enumerate} 
\item $\overline H(p)=1$ on $[-1, -1]$ 
\item $\overline H(p)=0$ for $ \vert p \vert > 1$ \dispdot
\end{enumerate} 
\begin{remarks}
\begin{enumerate} 
\item Note that here $H$ is not continuous, so it is not surprising that $\overline H$ isn't either. 
\item The above example can be easily adapted to get homogenized Hamiltonians  taking more than two values.
\item With some more work, one can compute the homogenization of any autonomous Hamiltonian on $T^*S^1$.
\end{enumerate} 
\end{remarks}

  \subsection{Homogenized metric and the Thurston-Gromov norm}
First consider the case where $H$ generates the geodesic flow of $g$:
even though $H(q,p) = \langle g(q) p, p \rangle ^{1/2}=\vert   p\vert   ^{2}_{g}$ is not compactly supported,
it is coercive, so we may define its homogenization according to Proposition \ref{Prop-12.4}. Then $T^n$
$H_{k}$ generates the geodesic flow of the rescaled metric by the
covering map
\begin{eqnarray*}
T^{n} &  \longrightarrow  & T^{n}\\
q & \longrightarrow  kq
\end{eqnarray*}
of degree $k^{n}$.

It is well-known that if $D$ is the distance defined by $g$ on $ {\mathbb R}^n$ the universal cover of $T^n$
(i.e. $D(x,y)$ is the length of the shortest geodesic for $g$ 
connecting $x$ to $y$ in $ {\mathbb R}^n$) and $D_{k}$ the one defined by $g_{k}(q)=g(kq)$
(corresponding to $H_{k}$), again on $ {\mathbb R}^n$,  
we have
$$
D_{k} (x,y) = \frac{1}{k} D (kx , ky)
$$
and
$$
\lim_{k\to +\infty} D_{k} (x,y) = {\overline D} (x,y)
$$
where $\overline D$ is the distance associated to some flat Finsler metric
$\overline g$ (see \cite{Acerbi-Buttazzo}, Theorem III.1).
Since $g_k$ is invariant by the $ \mathbb Z^n$-action, $g_k$ and hence $D_k$ descend to a metric $d_k$ on $T^n$, and
similarly $\overline D$ descends to a Finsler metric $\overline d$. 

 It is also well known that $g_{k}$ does not converge to $\overline g$
in any reasonable sense, except for the convergence of minimizers of
the associated energy functional
$$
E(\gamma) = \int_{0}^{1} \vert  \dot\gamma(t)\vert ^{2}_{g} dt  \dispdot
$$
This phenomenon is related to the notion of $\Gamma$-convergence introduced by De Giorgi and his school in the 70's (cf.  \cite{de Giorgi, de Giorgi-Franzoni}, \cite{Dal
Maso}, \cite{Braides}). We shall denote by 
 $L(\gamma), L_k(\gamma), \overline L (\gamma)$ the length of a curve for the respective metric $d, d_k, \overline d$.  
 In particular we easily see that $\ell_{k}(\alpha)$, the length for
$d_{k}$ of the shortest closed geodesic in the homotopy class $\alpha$
(in $\pi_1(T^n)\simeq H_1(T^n)\simeq \mathbb{Z}^{n}$), $\ell_{k}(\alpha)$, converges to  ${\overline\ell} (\alpha)$, the length for
$\overline d$ of the shortest closed geodesic in the homotopy class $\alpha$, $\overline\ell(\alpha)$.
In other words, set $$\mathcal P_\alpha=\{ u\in C^\infty([0,1], T^n) \mid [u]\in \alpha \}$$ and notice that this is the image by the projection of
$$\widetilde {\mathcal P}_ \alpha =\{ u\in C^\infty([0,1],  {\mathbb R}^n) \mid u(t+1)=u(t)+ \alpha \}$$
then
$$
\ell_{k} (\alpha) = \inf_{x\in \mathcal P_\alpha} L_k(x)=\inf \{ D_{k}(x,x+\alpha)\mid x \in \mathbb{R}^{n}\}
$$
and since $D_{k}(x,x+\alpha)$ converges uniformly to $\overline D(x,x+\alpha )$, and
$x$ needs only to vary in a fundamental domain $[0,1]^{n}$ in $\mathbb{R}^{n}$, we get that
$$
\lim_{k\to+\infty} \ell_{k} (\alpha) = \overline\ell (\alpha)\dispdot
$$
But the class $\alpha$ contains at least a second
closed geodesic, obtained by a minimax procedure (see \cite{Birkhoff}, page 133). Indeed let $\beta \in H_1(T^n)$ be independent form $\alpha$ and $u: [0,1]^2 \longrightarrow {\mathbb R}^n$ be  a smooth map. Set 
\begin{displaymath} 
\widetilde{\mathcal P}_{\alpha, \beta}=\{ u\in C^\infty( [0,1]^2,  {\mathbb R}^n) \mid u(s, t+1)=u(s,t)+\alpha, u(s+1,t)=u(s,t)+\beta\}\dispdot
\end{displaymath} 
Then there is a closed geodesic of  length
$$
\ell_{k} (\alpha ,\beta) =\inf_{u\in \mathcal P_{\alpha, \beta}}  \sup_{s\in [0,1]}  L_{k}(u_s)
$$ where $u_s(t)=u(s,t)$,  
and similarly for $\overline\ell (\alpha ,\beta)$. It is thus reasonable to ask whether
$$
\lim_{k\to +\infty} \ell_{k} (\alpha,\beta) = \overline\ell (\alpha ,
\beta) ?
$$
The methods of our theorem imply a positive answer, since $$\ell_{k}(\alpha, \beta)^2= c(\beta, E)$$
that is $\ell_{k}(\alpha, \beta)$ is the homological minimax level associated to the $1$-dimensional  homology class of the free loop space that is the image of 
$S^{1}$ by $\theta \mapsto \beta(\theta)\cdot \alpha $ where ``$\cdot$'' denotes the addition law on the torus. We more generally can look at 
$\ell_k(\alpha , \beta)$ where $\beta \in H_k(T^n)$. 
Our results imply
\begin{proposition} 
We have
\begin{displaymath} 
\lim_{k\to \infty} \ell_k(\alpha, \beta) = \overline  \ell(\alpha, \beta) = \overline \ell(\alpha) \dispdot
 \end{displaymath}  
\end{proposition} 
\begin{proof} 
The first statement is just Proposition \ref{Prop-12.4} (or Proposition \ref{Prop-13.4}). The fact that $\overline  \ell(\alpha, \beta) = \overline \ell(\alpha)$ follows from the fact that the Finsler metric $\overline d$ is flat (i.e. invariant by translation on the torus), hence if $c(t)$ is a geodesic such that $c(t+1)=c(t)+ \alpha$ and $\beta \in H_1(T^n)=  \mathbb Z^n$ then $c_s(t)=c(t)+s\beta$ has  length $\overline L (c_s)$ , independent from $s$, so that taking $x(s,t)=c(t)+s\beta$ we see $\overline  \ell(\alpha, \beta) = \overline \ell(\alpha)$ holds for $\beta \in H_1(T^n)$. The general case follows by using the Pontryagin product (see Proof of Proposition \ref{Prop-7.2}) to prove by induction that 
$\overline  \ell(\alpha, \beta_1\cdot ...\cdot \beta_k) = \overline \ell(\alpha) $. Since any class in $H_k(T^n)$ is a linear combination of Pontryagin products, and 
because of the general fact that $c(a,f)=c(b,f)$ implies $c(a+b,f)=c(a,f)=c(b,f)$ we may conclude our proof. 

\end{proof} 
Note that the analogous statement cannot hold for the whole length
spectrum of $g_{k}$ (i.e. the set of lengths of closed geodesics), as
it is easy to construct examples for which the length spectrum of
$g_{k}$ becomes dense as $k$ goes to infinity i.e. for any $\lambda
\in \mathbb{R}_{+}$ and $\delta > 0$ there is $k_{0}$ in $\mathbb{N}$
such that for all $k\geq k_{0}$, Spec $(g_{k})\cap [\lambda -\delta ,
\lambda + \delta] \not= \emptyset$ (just add lots of small ``blisters'' to a flat metric), while the spectrum of $\overline g$ is discrete.

Remember also that the Thurston-Gromov norm associated to $g$ is defined as follows : for each homology class $c$ in $H_{1}(T^n, {\mathbb R} )\simeq {\mathbb R} ^n$, let us define 
$ \Vert c \Vert_{TG}$ as follows. For $c$ rational, that is $m\cdot c \in H_{1}(T^n, {\mathbb Z})$ for some integer $m$, $ \Vert c \Vert _{TG} = \frac{1}{m} \ell (m\cdot c )$.  The norm is then extended by density of the rationals. 

The proof of the following result is then left to the reader:

\begin{proposition} 
The Thurston-Gromov norm coincides with the symplectic homogenization of the metric. 
\end{proposition}

 \section{Further questions}

 Sergei Kuksin pointed out that the homogenization or averaging described here is a ``dequantized averaging'', in the sense that the traditional homogenization is concerned with the limit of the ``quantized Hamiltonian'', $H(\frac{x}{ \varepsilon }, D_{x})$ as $ \varepsilon $ goes to zero. By this we mean a Partial differential operator with principal symbol $H (\frac{x}{ \varepsilon }, p)$ operating on the set of smooth functions on $T^{n}$ or $ {\mathbb R}^{n}$.  Here, on the other hand,  we deal directly with  the ``classical Hamiltonian'' $H(\frac{x}{ \varepsilon }, p)$. It is natural to ask whether there is a connection between quantized and dequantized averaging, or to use a simpler language, between the homogenization of an operator, and the symplectic homogenization of its symbol. We may already consider the simple case of the previous section:  according to the classical theory of  $\Gamma$-convergence, the limiting operator of the Laplacians associated to the metric $g_{k}$ converges to some elliptic operator, denoted $\Delta_{\infty}$. But  $\Delta_{\infty}$ is not in general equal to the Laplacian of the  metric $g_{\infty}$. First of all $g_{\infty}$ is not Riemannian, but only Finslerian. Moreover, it seems that $g_{\infty}$ detects changes in the metric on small sets: typically a three-dimensional torus with a metric made small along three geodesic circles in three orthogonal directions will have a much smaller $g_{\infty}$ than one without such ``short directions``. But the Laplacian will not detect this, since the Brownian motion will not see such lines. So the only reasonable question is whether the metric $g_{\infty}$ determines the Laplacian $\Delta_{\infty}$.

One may ask a more general question, that is

\begin{question}
Assume $H_{\nu}$ converges to $H$ for the $\gamma$-topology. Does the spectrum- or some quantity defined using the spectrum- of the operators $H_{\nu}(x, D_{x})$ converge to the spectrum - or some quantity defined using the spectrum - of $H(x,D_{x})$?
\end{question}

\section{Appendix}
\appendix
\section{Capacity of completely integrable systems}\label{Appendix-A}

Our goal is to prove the following
 \begin{proposition}\label{Prop-A.1}Let $\phi^{1}$ be the time-one flow associated to the continuous, compactly supported Hamiltonian, $h(p)$, defined on ${\rm T}^*{\mathbb T}^n$. Then $$c_+(\varphi ^1)\; =\; \sup_{p}h(p) \quad ,\quad
c_-(\varphi^1)\; =\; \inf_{p}h(p)$$
$$\gamma (\varphi^1)=c_{+}(\varphi^{1})-c_{-}(\varphi^{1})=\operatornamewithlimits{osc}_{p}h $$
As a result, a continuous compactly supported and integrable Hamiltonian has generalized flow equal to $\Id$ if and only if it is identically zero.
 \end{proposition}
\begin{proof}
We shall only consider smooth Hamiltonians, the general case follows by density of compactly supported smooth Hamiltonians for the $\gamma$-topology in the set of continuous 
compactly supported ones (since density already holds for the $C^{0}$ metric). 
Set $\varphi^{t}(q,p)=(Q _t(q,p),P_t(q,p))$, then the graph of $\varphi _t$
defines a Lagrangian submanifold $\Gamma _t$ in
$T^*(T^n\times {\mathbb R}^n)$ as the image of $(q,P)\mapsto
\bigl (\frac{q+Q_t}{2}\, ,\,
\frac{p+P_t}{2}\, ,\, p-P_t\, ,\, Q _t-q \bigr
)$. Note that even though $Q _t$ is in $T^n$,
$Q _t-q $ has a unique lift to ${\mathbb R} ^n$
which is continuous in $t$ and equals 0 for $t=0$. The
same argument allows us to define $\frac{q +Q
_t}{2}=q +\frac{Q _t-q }{2}$.

Moreover, if we set $x=\frac{q +Q _t}{2}\;$,
$y=\frac{p+P_t}{2}\;$, $\xi =p-P_t\;$, $\eta  =Q
_t-q$, the symplectic form is given by $d\xi \wedge
dx+d\eta  \wedge dy$. In our situation, we have
$$x_t\; =\; q +\frac{t}{2}h'(p)\qquad y_t=p\qquad
\xi _t=0\qquad \eta _t= h'(p)$$
Thus if we set $f_t(x,y)=t\, h(y)$, we have
$$\xi _t\; =\; \frac{\partial }{\partial x}\,
f_t(x,y)\quad ,\quad \eta _t\; =\; \frac{\partial
}{\partial y}f_t(x,y)$$
that is $f_t$ is a generating function of $\Gamma _t$
with no ``fiber variables". As we mentioned in Remark \ref{rem-3.4} \ref{rem-trivial-c}
$$c_+(\varphi^t)\; =\; \sup_{} f_t\quad ,\quad
c_-(\varphi ^t)\; =\; \inf_{}f_t$$
$$\gamma (\varphi ^t)\; =\; \sup_{(x,y}f_t-\inf_{(x,y}f_t$$
Since $f_1(x,y)=h(y)$ this proves our proposition.

\end{proof}

The following applies the ideas of \cite{Humiliere} section 4.4 page 390 and \cite{Humiliere 2} section 3.3. For a continuous Hamiltonian, $H$, we define its ``generalized flow'' to be 
the image of $H$ by the extension of $H \mapsto \varphi_{H}^{1}$ as a map from $\widehat {\mathcal{H}_{c}}(T^{*}M)$ to
$\gclDHamc{T^{*}M}$.
 
\begin{corollary} \label{Cor-app-A}
The following assertions hold
\begin{enumerate} 
\item \label{corappA-1} If $h_{1}(p)$ and $h_{2}(p)$ are compactly supported, continuous, and have the same ``generalized'' time-one flow (in $\gclDHamc{T^*T^n}$) then $h_{1}=h_{2}$.
\item \label{corappA-2} If $h_{1}(p),h_{2}(p)$ are continuous and compact-supported,  and have generalized flows (in $\gclDHamc{T^*T^n}$) $\varphi_{1}^t, \varphi_{2}^t$, then $h_{1}(p)+h_{2}(p)$ has generalized flow  $\varphi_{1}^t\circ \varphi_{2}^t$. As a consequence $\varphi_{1}^{t}\circ \varphi_{2}^{t}=\varphi_{2}^{t}\circ \varphi_{1}^{t}$. 
In particular if $\varphi^t$ is the flow associated to $h(p)$, $\varphi^{-t}$ is the flow associated to $-h(p)$.
\end{enumerate} 
\end{corollary}
\begin{proof} 
Proof of  \ref{corappA-1}. 

Indeed, according to the above Proposition, we have $$\gamma (\varphi_{1}^t (\varphi_{2}^t)^{-1}) = t \operatornamewithlimits{osc}_{p} (h_{1}-h_{2})$$ since according to the above proposition this is true for smooth  $h_{1},h_{2}$, and we conclude by density of compactly supported smooth functions in the set of  compactly supported continuous functions. Therefore $\varphi_{1}^1=\varphi_{2}^1$ implies $h_{1}=h_{2}$. 

 Proof of \ref{corappA-2}. 

Let $h_{k,1}(p), h_{k,2}(p)$ be smooth sequences $C^0$-converging to $h_1(p), h_2(p)$ respectively. This implies that these sequences  $\gamma$-convergence. Now the corresponding time-one flows, $\varphi_{k,1}^1,\varphi_{k,2}^1$ commute so that $h_{k,1}(p)+h_{k,2}(p)=h_k(p)$ has flow
$\varphi_{k,1}^1\circ \varphi_{k,1}^1$, and since the $\gamma$-limit of $\varphi_{k,1}^1\circ \varphi_{k,1}^1$ is $ \varphi_1\circ \varphi_2$, we get that $h(p)=h_1(p)+h_2(p)$. 
The commutativity of the addition, implies the commutativity of the flows. Finally, the flow of the zero Hamiltonian being the identity, the last claim follows.

\end{proof}

\section{Some ``classical'' inequalities}\label{Appendix-B}

Our goal here is to prove the following results:

\begin{proposition} \label{B1}
Let $S_{1}(x,\xi), S_{2}(x,\eta)$ be two \GFQI     and $S_{1,x}(\xi)=S(x,\xi), S_{2,x}(\eta)=S_{2}(x,\eta)$.

Then $$ \vert c(1_{x}, S_{1,x})-  c(1_{x}, S_{2,x})\vert \leq \gamma (S_{1},S_{2}) \dispdot $$
\end{proposition} 
 \begin{proposition} \label{B2} For  a Hamiltonian isotopy $\varphi^{t }$ on $T^*T^n$, and $L$ Hamiltonian isotopic to the zero-section,
 we have $$\gamma(\varphi^{1} (L),L)\leq \gamma(\varphi^1).$$
\end{proposition} 

Note that the isotopy may be assumed to be compactly supported, since we may truncate the Hamiltonian outside the compact set $\bigcup_{t\in[0,1]}\varphi^{t}(L)$.

\begin{proposition} \label{Prop-B3} Let $S$ be a \GFQI,  $\alpha \in H^{k}(M)$ and $a \in H_k(M)$. Then 
 $$c(\alpha,S)=\inf \left \{ c(a,S) \mid a \in H_{k}(M),  \langle \alpha, a \rangle \neq 0\right\} .$$
  $$c(a,S)=\sup\left\{ c(\alpha,S) \mid  \alpha \in H^{k}(M) \langle \alpha, a \rangle \neq 0\right\} .$$
  In particular $c(1,S)=c([pt]_M, S)$ and $c(\mu_M, S)=c([M],S)$. 
  
As a result, we have for any nonzero $\alpha$ in $H^{*}(M)$ $$\vert c(\alpha , \varphi_{1})- c(\alpha, \varphi_{2}) \vert \leq \gamma (\varphi_{1},\varphi_{2}).$$
\end{proposition} 
\begin{proof}[Proof of proposition \ref{B1}]
It is important to notice that all formulas or inequalities we shall use in the proof are established in \cite{Viterbo-STAGGF}  for any \GFQI    , and not only those associated to an embedded Lagrangian submanifold.

Indeed, we have according to \cite{Viterbo-STAGGF}, prop. 3.3, p. 693 the formula $$c(u\cdot v, S_{1}\oplus S_{2}) \geq c(u,S_{1})+c(v,S_{2})$$ where $S_{1}\oplus S_{2}(x,\xi_{1},\xi_{2})=S_{1}(x,\xi_{1})+S_{2}(x,\xi_{2})$.  We then apply this inequality  to the generator $u=v=1_{x}$  of $H^0(\{ x \})$ (which is its own Poincar\'e dual) and we get

$$c(1_{x}, S_{1,x}\ominus S_{2,x}) \geq c(1_{x},S_{1,x})+ c(1_{x}, -S_{2,x})$$

But by \cite{Viterbo-STAGGF}, prop. 2.7, p. 692,  we have

$$c(1_{x}, -S_{2,x})=-c(1_{x},S_{2,x})$$

thus

$$c(1_{x}, S_{1,x}\ominus S_{2,x}) \geq  c(1_{x},S_{1,x})- c(1_{x}, S_{2,x})$$

Similarly we have 
$$c(1_{x}, S_{1,x}\ominus S_{2,x}) \leq c(1_{x},S_{1,x})+ c(1_{x}, -S_{2,x})=c(1_{x},S_{1,x})- c(1_{x}, S_{2,x})$$

Now since $(S_1\ominus S_2)_x(\xi_1,\xi_2)=S_{1,x}(\xi_1)-S_{2,x}(\xi_2)$ and $\gamma (L_1,L_2)=\gamma (S_1\ominus S_2)$,  we have by the reduction inequality (\cite{Viterbo-STAGGF}, prop. 5.1 p. 705)

$$c(1_{x}, S_{1,x}\ominus S_{2,x}) \leq \gamma (S_{1}\ominus S_{2})= \gamma (L_{1},L_{2})$$

and our claim follows.

\end{proof}

\begin{proof}[Proof of proposition \ref{B2}]

Indeed, $\varphi(0_{T^n})$ is the symplectic  reduction of $$\widetilde\Gamma (\varphi) = \{(q,p,Q,P) \mid (Q,P)=\varphi (q,p)\}$$ by the coisotropic submanifold, $N_p=\{p=0\}$. 
The symplectic  map $$ (q,p,Q,P) \longrightarrow (Q,p,p-P, Q-q)=(u,v,U,V)$$ sends $N_p$ to $N_v=\{v=0\}$. The reduction inequality in \cite{Viterbo-STAGGF} (prop. 5.1, page 705) and the fact that  $\varphi(L)= \widetilde \Gamma(\varphi)) \cap N_v / N_v^\omega$, then implies
$$\gamma (\varphi (L)) \leq \gamma (\widetilde \Gamma (\varphi)) = \gamma (\varphi)$$ 

Now if $L=\rho (0_{T^n})$, we have

$$\gamma (\varphi (L), L) = \gamma (\varphi(\rho(0_{T^n})), \rho(0_{T^n})) = \gamma (\rho^{-1}\varphi \rho(0_{T^n}) ) \leq \gamma (\rho^{-1}\varphi \rho) = \gamma (\varphi)$$
where the second and last equality follow by symplectic invariance of $\gamma$, while the inequality has been proved above. 
\end{proof} 
 \begin{proof}[Proof of Proposition \ref{Prop-B3}] Let us denote by $T$ the Thom isomorphism, and by $T^{*}$ its homological counterpart. 
 Let $a$ in $H_{k}(M)$ and $T^{*}a$ it image in $H_{k+d}(E^{+\infty},E^{-\infty})$. Similarly, let $\alpha \in H^{k}(M)$ and $T\alpha$ its image in $H^{k+d}(E^{+\infty},E^{-\infty})$. 
 Now, considering the $(k+d)$-cohomology group as the dual of the $(k+d)$-homology group (we work with coefficients in a field), we have the following diagram
$$ \xymatrix { H_{k+d}(E^{\lambda},E^{-\infty}) \ar^{i_{\lambda}^{*}(Tu)}@{->}[dr] \ar^{i_{\lambda}}[r]& H_{k+d}(E^{+\infty},E^{-\infty})\ar^{Tu}[d]\\   & {\mathbb R} }.
 $$
 
and we have $i_{\lambda}^{*}(T\alpha)\neq 0$ if and only if there is $T^{*}a \in \Image (i_{\lambda})$ such that $\langle T\alpha, T^{*}a \rangle \neq 0$.  
 
 Since $\langle T\alpha, T^{*}a \rangle = \langle \alpha, a\rangle$ we have the inequality $\lambda \geq c(a,L)$ if and only if there exists $\alpha$ such that 
  $\langle \alpha, a\rangle \neq 0$, and  $\lambda \geq c(\alpha,L)$ : this follows immediately from the universal coefficient theorem ($H^k(M)$ is the dual of $H_k(M)$). In other words we proved  that 
  
\begin{equation*}\thetag{*}\hskip 2cm c(a,L)=\sup\{ c(\alpha , L) \mid \langle \alpha, a \rangle \neq 0\}\end{equation*}
This proves the first statement.  Similarly we have 
 the inequality $\lambda \geq c(\alpha,S)$ if and only if there exists $a$ such that 
  $\langle \alpha, a\rangle \neq 0$, and  $\lambda \geq c(a,S)$, so 
 $$c(\alpha,S)=\inf\{ c(a , S) \mid \langle \alpha, a \rangle \neq 0\}$$ 
Now by \cite{Viterbo-STAGGF} (prop. 2.7, page 692),  if $b \in H_{n-k}(M), \alpha \in H^{k}(M)$ are Poincar\'e dual classes\footnote{This means for all $\beta \in H^{n-k}(M)$ we have $\langle \beta , b \rangle = \langle \alpha\cup \beta , [M]\rangle$}, we have the identity $c(\alpha,S)=-c(a,\overline S)$. Moreover from the same paper (prop. 3.3 page 693) we see that $c(\alpha\cdot \beta,S_{1}\ominus S_{2})\geq c(\alpha,S_{1})+c(\beta,\overline S_{2})$.

So for  $b \in H_{n-k}(M)$ and $\alpha \in H^k(M)$ be Poincar\'e dual classes
we may write, using \thetag{*}
\begin{gather*}  c(\alpha,S_{1})-c(\alpha,S_{2})= c(\alpha,S_{1})+ c(b ,\overline S_{2}) =\\ c(\alpha,S_{1})+\sup\{c(\beta,\overline S_{2}) \mid \langle \beta , b \rangle\neq 0\}=\sup\{c(\alpha,S_{1})+c(\beta,\overline S_{2}) \mid \alpha\cup \beta \neq 0\}\leq \\ \sup \{ c(\alpha\cdot \beta , S_{1}\ominus S_{2}) \mid \alpha\cdot \beta \neq 0\} \end{gather*}
Since $\alpha\in H^{k}(M), \beta \in H^{n-k}(M)$, $\alpha\cdot \beta\neq 0$ implies that $\alpha\cdot  \beta $ is a multiple of the orientation class $\mu$, hence, the last term in the above equals to $c(\mu, S_{1}\ominus S_{2})$. 
We finally proved

$$c(\alpha,S_{1})-c(\alpha,S_{2})   \leq c(\alpha\cdot \beta, S_{1}\ominus S_{2})=c(\mu, S_{1}\ominus S_{2})$$
Using the fact that $c(\mu, S_{2}\ominus S_{1})=-c(1,S_{1}\ominus S_{2})$, we get exchanging $S_1$ and $S_2$
$$c(\alpha,S_{2})-c(\alpha,S_{1})   \leq c(\mu, S_{2}\ominus S_{1})= - c(1, S_1\ominus S_2)
$$
that is $$c(\alpha,S_{1})-c(\alpha,S_{2})   \geq c(1, S_{1}\ominus S_{2})$$ so that finally 
$$   c(1,S_{1}\ominus S_{2}) \leq c(\alpha,S_{1})-c(\alpha,S_{2})  \leq c(\mu, S_{1}\ominus S_{2})$$
Since in our case $S_1, S_2$ are \GFQI for $\overline\Gamma (\varphi_1), \overline \Gamma(\varphi_2)$, we have 
$c(1, S_1\ominus S_2) \leq 0 \leq c(\mu, S_{1}\ominus S_{2})$, so the above inequality implies 
\begin{gather*}  \vert c(\alpha,\varphi_{1})-c(\alpha,\varphi_{2})\vert   =\vert c(\alpha,S_{1})-c(\alpha,S_{2})   \vert \leq \\ c(\mu, S_{1}\ominus S_{2})-c(1,S_{1}\ominus S_{2})=\gamma(S_{1},S_{2})=\gamma (\varphi_1,\varphi_2)
\end{gather*} 

 \end{proof} 
\section{A different type of homogenization}
 As has been pointed out to me by M. Bardi and F. Cardin, the Hamilton-Jacobi homogenization is often applied to the sequence $H(x, \varepsilon ^{-1}x,p)$: we seek the limit as $ \varepsilon $ goes to zero of
 
 \begin{equation}
\tag{$HHJ_{ \varepsilon }$}
\left\lbrace
\begin{array}{ll}
& \frac{\partial}{\partial t} u (t,x) + H (x, \varepsilon ^{-1}x, \frac{\partial}{\partial
x}u (t,x)) = 0\\
& u (0,x) = f(x)\dispdot
\end{array}
\right.
\end{equation}

This  seems to be more general than the case we deal with. However we prove here that this problem can be reduced to the case we studied.

Let indeed $K(x,y,p_{x},p_{y})$ be the Hamiltonian on $T^*(T^n\times T^n)$ defined by $K(x, y,p_{x},p_{y})=H(x,y,p_{x}+p_{y})$.

\begin{remark} 
Note that even if $H$ is compactly supported, $K$ is not,  since the map $(p_x,p_y) \mapsto p_x+p_y$ is not proper.  However $ K$ is $C^2$ bounded, and this is enough to carry on symplectic homogenization as in Sections 6,7 and 8.  
\end{remark} 

We claim that the equation
\begin{equation*} \tag{1}  \frac{\partial}{\partial t} u (t,x) + H (x, x, \frac{\partial}{\partial
x}u (t,x)) = 0 \end{equation*}

is satisfied by $u(t,x)=v(t,x,x)$ where $v$ is the variational solution of

\begin{equation*}\tag{2}
\frac{\partial}{\partial t} v(t,x,y) + K (x, y, \frac{\partial}{\partial
x}v (t,x,y), \frac{\partial}{\partial
y}v (t,x,y)) = 0 \dispdot
\end{equation*}

Indeed this can be rewritten for $x=y$ as

$$
 \frac{\partial}{\partial t} v (t,x,x) + H (x, x, \frac{\partial}{\partial
x}v (t,x,x)+\frac{\partial}{\partial
y}v (t,x,x)) = 0
$$

and since $$  \frac{\partial}{\partial
x}u (t,x)= \frac{\partial}{\partial
x}v (t,x,x)  + \frac{\partial}{\partial
y}v (t,x,x) $$

 this proves our claim.

 Now we may replace $K$ by $$K_\varepsilon (x,y,p_x,p_y)=H(x,\frac{y}{\varepsilon},p_x+p_y)=H_\varepsilon (x,y,p_x+p_y)$$
 and we shall get solutions of the equation
 $$  \frac{\partial}{\partial t} u_\varepsilon (t,x) + H (x, \varepsilon^{-1}x, \frac{\partial}{\partial
x}u_\varepsilon (t,x)) = 0.$$

 However we have to prove that

 \begin{proposition}
 If $v$ is a variational solution of \thetag{2}, then $u(t,x)=v(t,x,x)$ is variational solution of
 \thetag{1}.
 \end{proposition}
 \begin{proof}

 Indeed it is enough to prove that if $L$ is a Lagrangian submanifold containing the isotropic submanifold
 $$I_f=\{(t,-K(x,y,\frac{\partial f}{\partial x},\frac{\partial f}{\partial y}), \frac{\partial f}{\partial x},\frac{\partial f}{\partial y})\}$$
 and such that $L$ is contained in

 $$\{(t,\tau , x,y, p_x,p_y) \mid \tau +K(t,x,y,p_x,p_y)=0\}$$
 then its symplectic reduction by $x=y$ is a Lagrangian submanifold, $L_{\Delta}$, contained in
  $$\{(t,\tau , x, p_x) \mid \tau +H(t,x,x,p_x)=0\}$$

 and $L_{\Delta}$ contains
 
 $$I_g=\left \{(t,-H(x,x,\frac{\partial g}{\partial x}(x)), \frac{\partial g}{\partial x}(x)) \right \}$$
 where $g(x)=f(x,x)$.

 This is rather straightforward to check, and is a consequence of the commutation relation $\{x-y,p_x + p_y\}=0$.
 
 Finally we notice that if $S(t,x,y,\xi)$ is a \GFQI     for $L$, then $v(t,x,y)=c(1_{t,x,y},S)$, while the reduction $L_{\Delta}$ of $L$ by $x=y$ has 
 for \GFQI     the function $S_{\Delta}=S(x,x,\xi)$, so that  $u(t,x)=c(1_{t,x},S_{\Delta})=c(1_{t,x,x},S)=v(t,x,x)$. This concludes our proof. 
  \end{proof}
 \begin{corollary}
 Let $H(x,y,p)$ be a compactly supported function on $T^n\times T^n \times ({\mathbb R})^n$. Set $K(x,y,p_x,p_y)=H(x,y,p_x+p_y)$, and let
 $\overline K (x,p_x,p_y)$ be the homogenization of $K$ with respect to the $y$ variable (i.e. $\overline K (x,p_x,p_y)$ is the
 $\gamma$-limit of $K_{\varepsilon}(x,y,p_x,p_y)=K(x,\frac{y}{\varepsilon},p_x,p_y)$). Then  $\overline H(x,p)=\overline K(x,p,0)$
 is such that a sequence  $(v_{ \varepsilon })_{ \varepsilon >0}$ of variational solutions of
 $$
 \frac{\partial}{\partial t} v (t,x) + H (x, {\varepsilon}^{-1} x, \frac{\partial}{\partial
x}v (t,x)) = 0
$$

$C^{0}$-converges to a variational solution of

$$
 \frac{\partial}{\partial t} v (t,x) + {\overline H} (x, \frac{\partial}{\partial
x}v (t,x)) = 0
$$
 \end{corollary}

\begin{proof}
The only missing fact is to show that $\overline K(x,p_x,p_y)$ can be written as $\overline H(x,p_x+p_y)$ (if this is the case, we recover
$\overline H$ by $\overline H(x,p)=\overline K(x,p,0)$). We claim that this amounts to proving that the $x_{j}$ being coordinates on the first torus, and $y_{j}$ the same coordinate on the second torus, the function $x_{j}-y_{j}$ commutes\footnote{Of course $x_{j}-y_{j}$ is only defined in $S^{1}$, but this is equivalent to claiming that for any function $f$ on $S^{1}$, $f(x_{j}-y_{j})$ commutes with $\overline K$, provided it commutes
with $K_{\varepsilon}$.} with $\overline K$, knowing it commutes
with $K_{\varepsilon}$. The result follows from the fact that this commutation relation goes to the $\gamma$-limit. If $\varphi_{j}^{t}$ is the flow of $x_{j}-y_{k}$, and 
$\psi_{ \varepsilon }^{t}$ the flow of $K_{ \varepsilon }$, the relation $\psi_{ \varepsilon }^{t}\circ \varphi_{j}^{t}\circ \psi_{ \varepsilon }^{-t}=\varphi_{j}^{t}$ 
implies for $\overline\psi^{t}=\gamma-\lim_{ \varepsilon \to 0}\psi^{t}$ that $\psi^{t}\circ \varphi_{j}^{t}\circ \psi^{-t}=\varphi_{j}^{t}$.

\end{proof}

\begin{lemma}
Let $H$ be a compactly supported smooth Hamiltonian and $(K_j)_{j\geq 1}$ a sequence of Hamiltonians commuting with $H$ and  $\gamma$-converging to a continuous Hamiltonian $K_\infty$.
Then $H$ and $K_\infty$ $\gamma$-commute in the following sense
$$K_\infty(\varphi_H^t)=K_\infty$$  where $\phi_H^t$ is the flow
of $H$. In particular if  the $K_j$ commute with the functions $x_j-y_j$ then so does $K_\infty$, and $K_\infty (x,p_x,p_y)$ only depends on
$(x,p_x+p_y)$.
\end{lemma}

\begin{proof}
Indeed, the flow of $K_j$ being $\psi_j^t$ we have that $ \varphi^t_H \psi_j^s \varphi^{-t}_H$ $\gamma$-converges to
$ \varphi^t_H \psi_\infty^s \varphi^{-t}_H$, hence our first result.
Now the flow of $x_j-y_j$ is given by $$(x,y,p_x,p_y) \longrightarrow  (x,y,p_x+te_j,p_y+te_j)$$

Now a continuous Hamiltonian commuting with the functions $x_j-y_j$ only depends on $p_x+p_y$. Indeed it is easy to show that for all $t$,
$K_j(x,y,p_x+te_j,p_y-te_j)=K_j(x,y,p_x,p_y)$ hence  $\gamma$-converges to  both $\overline K(x,p_x+te_j,p_y-te_j)$ and $\overline K(x,p_x,p_y)$. Thus by uniqueness of the limit,  for all $t$ and $j$, 
$\overline K(x,p_x,p_y)=\overline K(x,p_x+te_j, p_y-te_j)$. This is equivalent to the property that $\overline K$ is a function of $(x,p_x+p_y)$.
\end{proof}

\section{Generating function for Euler-Lagrange flows} \label{Appendix-Lag}

Let $N$ be a compact manifold. The goal of this appendix is to prove that if the Hamiltonian $H(t,q,p)$ is strictly convex in $p$ and its flow is complete, then $\varphi^{t}(0_{N})$ has a \GFQI     equal to a  positive definite quadratic form at infinity. 
The same property holds for the graph of $\varphi^{t}$. We remind the reader that using the Legendre transform, the flow of $H$ can be identified with the Euler-Lagrange flow associated to the 
Legendre dual of $H$, $L(t,q,\xi)$, where $$L(t,q,\xi)= \sup \{ \langle p,\xi \rangle - H(t,q,p) \mid p \in T_{q}^{*}N\}.$$ 
The definition of Tonelli Lagrangians can be found in Definition \ref{Tonelli-Lagrangian}.

\begin{proposition} \label{Prop-Appendix-Lag}
Let $L(t,q,\xi)$ be a  Tonelli Lagrangian defined on ${\mathbb R} \times TN$, $1$-periodic in $t$. Let $\varphi^{t}$ be the flow of the corresponding Hamiltonian $H(t,q,p)$ defined on $ {\mathbb R} \times T^{*}N$. Then $\Lambda=\varphi^{1}(0_{N})$ has a generating function equal to a positive definite quadratic form outside a compact set.  As a consequence, if we denote by $S(q,\xi)$ this function, we have $c(1_{q},\Lambda)=\inf_{\xi \in {\mathbb R} ^{q}}S(q,\xi)$ and $c(1,\Lambda)=\inf_{(q,\xi)\in T^{n}\times {\mathbb R}^{q}} S(q,\xi)$. 
 \end{proposition} 
 \begin{proof} Using Brunella's idea of embedding $T^{*}N$ into $T^{*} T^{n}$ (cf. \cite{Brunella}) we only need to prove this for a Lagrangian defined on the torus (this is the only case we use here anyway). 
 
 Thus consider a general Lagrangian $L(t,q,\xi)$ and the corresponding Hamiltonian $H(t,q,p)$. If we look for intersection points $\varphi_{}^{1}(0_{N})\cap 0_{N}$, we can modify $ H$ outside a compact set, so that $\widehat H(t,q,p)= C\vert p \vert ^{2}$ outside a compact set,  $\widehat H$ is still strictly convex and for $t$ in $[0,1]$,  $\varphi^{t}(0_{N})$ is unchanged. There is thus a corresponding  ``truncated'' Lagrangian $\widehat L$. We still denote by $L$ the truncated Lagrangian in the sequel. Note that this is automatically a Tonelli Lagrangian. 
 
 Consider the set $W=\{(q_{1},q_{2}) \in {\mathbb R}^{n}\times {\mathbb R}^{n}\}/\simeq $ where $(q_{1},q_{2})\simeq (q_{1}+\nu, q_{2}+\nu)$ for $ \nu \in {\mathbb Z}^{n}$ 
 and we have the projection $W \to T^{n}$ on the first component, with fiber $ {\mathbb R} ^{n}$. We sometimes denote by $(q,x)$ an element in $W$ with projection $q$,
 and  $x=q_{2}-q_{1} \in {\mathbb R}^{n}$ which is well defined in $ {\mathbb R} ^{n}$. 
 
Consider for $(q_{1},q_{2}) \in W$  $$A(q_{1},q_{2})=\inf \left \{\int_{0}^{1}L(s,q(s),\dot q(s)) dt \mid q:[0,1] \to  {\mathbb R}^{n},\; q(0)=q_{1}, q(T)=q_{2}\right\}$$

The existence of a critical point of the  energy $E(q)=\int_{0}^{1}L(s,q(s),\dot q(s)) dt $ on the set  $\mathcal P(q_{1},q_{2})=\{ q:[0,1] \to  {\mathbb R}^{n} \mid q(0)=q_{1}, q(1)=q_{2}\}$  of paths connecting $q_{1}, q_{2}$ is equivalent, by the standard methods of the  calculus of variations, to the existence of a point in $\varphi^{1}(\{q_{1}\}\times {\mathbb R} ^{n}) \cap \{q_{2}\}\times {\mathbb R} ^{n}$. For $L(t,q,p)=L^{0}(t,q,p)=\frac{C}{2} \vert p \vert ^{2}$, the corresponding maps is given by  $\varphi_{0}^{1}(q,p)=(q+Cp,p)$, hence $\varphi_{0}^{1}(\{q_{1}\}\times {\mathbb R} ^{n})=\{ (q_{1}+Cp,p) \mid p \in {\mathbb R}^{n}\}$ and this intersects transversally  at a unique point all other $\{q_{2}\}\times {\mathbb R} ^{n}$. This will still 
hold for a $C^{1}$-small compactly supported perturbation of $\varphi_{0}^{1}$, hence for a general $L$ as above, provided $t$ is small enough, the
minimizer realizing $A(q_{1},q_{2})$ is unique. 

\begin{lemma}
Let $L$ be a Lagrangian such that $L(t,q,\xi)=C \vert \xi \vert^{2}$ for $\xi$ large enough. Then for $t$ small enough, we have that for any $q_{1},q_{2}$, the function
$E_{t}(\gamma)=\int_{0}^{t}L(s,q(s),\dot q(s)) dt$ has a unique critical point  $\gamma : [0,t] \to {\mathbb R} ^{n}$, depending continuously on $(q_{1},q_{2})$. Moreover this unique critical point is a minimum and for $ \vert q_{2}-q_{1} \vert $ large enough, the critical value $A_{t}(q_{1},q_{2})= \frac{C}{2t} \vert q_{2}-q_{1}\vert ^{2} $ is quadratic positive definite in $q_{2}-q_{1}$. 
\end{lemma}  The existence of a minimum of $E_{t}$ on ${\mathcal P}(q_{1},q_{2})$ implies  that if  uniqueness and transversality hold,  the only critical point of $E$ is a  minimum, and  $A_{t}$ is a smooth function of $(q_{1},q_{2})$. 
\begin{proof} For simplicity we may assume $C=1$. 
Let $\varphi^{t}$ be the flow associated to $H$, the Legendre dual of $L$, and $\pi: T^{*} {\mathbb R} ^{n} \to {\mathbb R}^{n}$ be the projection. We claim that for $t$ small enough, $\zeta^{t}=\pi\circ \varphi^{t} : \{q\}\times {\mathbb R} ^{n} \to {\mathbb R}^{n} $ is a diffeomorphism. Indeed, we claim that 
\begin{enumerate} 
\item \label{A}For $ \vert p \vert $ large enough (independently of $t$), $\varphi^{t}(q,p)=q+tp$
\item \label{B} For $t$ small enough $D\zeta^{t}(q,p)$ is invertible.
\end{enumerate} 

The first claim is clear: if $H(t,q,p)= \vert p \vert ^{2}$ for $ \vert p \vert \geq r$, then for $\vert p \vert \geq r$, we have $\varphi^{t}(q,p)=q+tp$. 
The second claim  only needs to be checked for $ \vert p \vert \leq r$. 
Let us compute \begin{gather*}  \frac{d}{dt} (D\varphi^{t}(q,p))_{\mid t=t_{0}}=D( \frac{d}{dt} \varphi^{t}(q,p))_{\mid t=t_{0}}=D (X_{H}(\varphi^{t_{0}}(q,p)))=\\  DX_{H}(\varphi^{t_{0}}(q,p))D\varphi^{t_{0}}(q,p) \end{gather*} 
so if $M(t)=D\varphi^{t}(q,p)$ we have $$ \frac{d}{dt}M(t)= DX_{H}(\varphi^{t_{0}}(q,p))M(t)$$ and $$D(\pi \circ \varphi^{t})(q,p)_{\mid \{0\}\times {\mathbb R} ^{n}}=D\pi (\varphi^{t}(q,p)) D\varphi^{t}(q,p)_{\mid \{0\}\times {\mathbb R} ^{n}}= D\pi \circ M(t)_{\mid \{0\}\times {\mathbb R} ^{n}}$$
Now $$M(t)=M(0)+t DX_{H}(q,p)+ o(t)$$ and $X_{H}(q,p)= \begin{pmatrix} \frac{\partial H}{\partial p}, &  -\frac{\partial H}{\partial q} \end{pmatrix}$ so that 
$$DX_{H}(q,p)= \begin{pmatrix} \frac{\partial^{2} H}{\partial p\partial q} &-\frac{\partial^{2} H}{\partial q^{2}}  \\ \frac{\partial^{2} H}{\partial p^{2}}  &-  \frac{\partial^{2} H}{\partial p\partial q} \end{pmatrix}$$ 
and $$D\pi \circ DX_{H}(q,p)_{\mid \{q\}\times {\mathbb R} ^{n}}=\frac{\partial^{2} H}{\partial p^{2}} $$
As a result, denoting $S(t)= D\pi \circ M(t)_{\mid \{0\}\times {\mathbb R} ^{n}}$ we get $$S(t)=S(0)+ t \frac{\partial^{2} H}{\partial p^{2}} (q,p)+ o(t)$$ and since $S(0)=0$ and  the reminder $o(t)$ is uniform on any  set bounded in $p$ (since it will be bounded, by Taylor's formula  by higher derivatives of $H$ which are periodic in $q$), we get that for $t$ small enough, $S(t)$ will be invertible for $ \vert p \vert \leq r$. 
This concludes the proof of \ref{B}. 

We thus know that the map $\pi\circ \varphi^{t}$ is equal to $q+ t \cdot \Id$ outside a compact set, and has invertible differential. It is a classical calculus exercise to conclude that such a  map is a diffeomorphism.  This means that given $q_{1}$ for every $q_{2}$ there is a unique critical point of $\gamma \mapsto  E_{t}(\gamma) $, which is necessarily a minimum, since we know by the Tonelli condition implies  that there is a minimum. Moreover the map $q_{2}\mapsto P(q_{2})$ where $P(q_{1},q_{2})$ is the unique point  such that $\pi\varphi^{t} (q_{1},P(q_{1},q_{2}))=q_{2}$. Finally for $q_{2}-q_{1}$ large enough, the path $q(t)=q_{1}+ \frac{s}{t}(q_{2}-q_{1})$ is a critical point of $E$ on ${\mathcal P}(q_{1},q_{2})$.

 \end{proof} 

Now let $N$  be chosen large enough so that $\varphi^{\frac{k+1}{N} }\varphi^{- \frac{k}{N} }$ satisfy the conclusions of the above lemma (i.e. $t=1/N$ is small enough in the sense of the previous lemma). Let $A_{k}(q,q')$ be the minimum of the energy corresponding to $L(t,q,\xi)$ for $t \in [ \frac{k}{N}, \frac{k+1}{N}]$.  Then, consider the 
function
focused
$$S(q_{1};q_{2},q_{3},...,q_{N})=A_{0}(q_{1},q_{2})+A_{1}(q_{2},q_{3})+....+A_{N-1}(q_{N-1},q_{N}).$$
One easily checks that $S$  is a generating function for $\varphi^{1}(0_{T^{n}})$. Moreover it is asymptotically quadratic in the sense that it satisfies the assumptions of proposition 1.6 page 441 of \cite{Viterbo-Montreal}, and this proves that $S$ can be deformed into a \GFQI     equal to a positive definite quadratic form generating the same Lagrangian (see loc. cit. prop 1.6 and the proof of theorem 1.7). 
\end{proof} 

\section{Relationship with  \texorpdfstring{\cite{M-V-Z}}{MVZ}}
The paper [M-V-Z] contains a slightly different approach to symplectic homogenization : it is focused on the behavior of the Lagrangians $L_{p_0}$ by iterations of $\varphi_H$. They also make use of Floer homology, but this does not really matter, as we could use Generating Function homology (see [Traynor], and [V2] for the equivalence of the two).  We here briefly relate their definition with ours. 
Their version of homogenization (see the proof of theorem 1.3) is defined by  $$\mu_p(\varphi)=\lim_{k\to \infty} \frac{1}{k} c_+(\tau_{-p}\varphi^k (\tau_{p}(0_N))$$
We claim
\begin{proposition}\label{Prop-Appendix-E}
 Let $\varphi$ be the time-one flow of a compactly supported smooth Hamiltonian $H$. Let $\overline H$ be defined in our Main Theorem.  Then 
we have $\mu_p(\varphi)=\overline H(p)$.
\end{proposition} 
\begin{lemma} 
Let $H(p)$ be an integrable Hamiltonian. Then we have setting $L_{p_0}=\{(x,p_0) \mid x \in T^n\}$ that $FH^*(\varphi_H(L_{p_0}),L_{p_0};a,b)= H^*(T^n)$  if $H(p_0)\in ]a,b]$ and vanishes otherwise. 
 \end{lemma} 
 \begin{proof} 
 Indeed, the flow is given by $(x,p) \mapsto (x+t\nabla H(p), p)$ and we have that $FH^*(\varphi_H(L_{p_0}),L_{p_0};a,b)=FH^*(L_{p_0},L_{p_0},H;a,b)$ where the second homology is obtained by taking trajectories of $X_H$ and  the action of a trajectory is $\int_0^1 pdx - H dt =p_0 \nabla H(p_0) -H(p_0)$. Note that for $p_0=0$ we get $-H(0)$. For a general $p_0$, replacing $H$ by $K_{p_0}(p)=H(p+p_0)$ we get that $FH^*(L_{0},L_{0},K_{p_0};a,b)$ is non zero if and only if $-K(0)=-H(p_0)\in ]a,b]$. Note that if $\tau_{p_0}$ is the vertical translation by $p_0$, we have $\varphi_{K_{p_0}} =T_{-p_0}\varphi_H T_{p_0}$. 
  \end{proof} 
  \begin{proof} [Proof of Proposition \ref{Prop-Appendix-E}]
  Now let $H$ be a smooth compact supported Hamiltonian. Setting $\varphi_k= \rho_k^{-1}\varphi_H^k\rho_k$, we have according to our Main Theorem that  $\varphi_k$ $\gamma$-converges to $\overline \varphi$  and since $\tau_{p_0}$ and $\rho_k$ commute, 
 the sequence $ \frac{1}{k} c_+(\tau_{-p_0}{\varphi_k} \tau_{p_0}(0_N), 0_N)$ converges to $c_+(\overline \varphi (L_{p_0}))=\overline H(p_0)$. This uses the fact that the map 
$$ {\DHam}(T^*N) \times  {\mathcal L}(T^*N) \longrightarrow   {\mathcal L}(T^*N) $$
given by $$(\varphi, L) \mapsto \varphi(L)$$ extends by continuity to 
 $$ \gclDHam{T^*N} \times \widehat {\mathcal L}(T^*N) \longrightarrow  \widehat {\mathcal L}(T^*N) $$
(see \cite{Humiliere}, prop. 4.3). 
\end{proof}

\renewcommand \vol {\rm vol. }

\end{document}